\theoremstyle{thmstyleone}%
\newtheorem{theorem}{Theorem}
\newtheorem{proposition}[theorem]{Proposition}
\newcommand{\ProxB}[1]{{\rm prox}_{h}^{B_k}(#1)}
\newcommand{\argmin}{\operatornamewithlimits{argmin}}
\newtheorem{assumption}{Assumption}{\bf}{\rm}
\theoremstyle{thmstyletwo}%
\newtheorem{remark}{Remark}%
\theoremstyle{thmstylethree}%
\newtheorem{definition}{Definition}%
\newtheorem{lemma}{Lemma}%
\title{Inexact proximal DC Newton-type method for nonconvex composite functions}
\author[1]{Shummin Nakayama\footnote{E-mail:~\texttt{snakayama@uec.ac.jp}}}
\author[2]{Yasushi Narushima \footnote{E-mail:~\texttt{narushima@ae.keio.ac.jp}}}
\author[3]{Hiroshi Yabe \footnote{E-mail:~\texttt{yabe@rs.tus.ac.jp}}}
\affil[1]{Info-Powered Energy System Research Centern, The University of Electro-Communications%, Tokyo, Japan
\footnote{1-5-1 Chofugaoka, Chofu, Tokyo 182-8585, Japan}}%
\affil[2]{Department of Industrial and Systems Engineering, Keio University
\footnote{3-14-1 Hiyoshi, Kouhoku-ku, Yokohama 223-8522, Japan}}
\affil[3]{Center for Data Science, Tokyo University of Science
\footnote{1-3 Kagurazaka, Shinjuku-ku, Tokyo 162-8601, Japan}}
\begin{document}

\maketitle

\begin{abstract}
We consider a class of difference-of-convex (DC) optimization problems where the objective function is the sum of a smooth function and a %possible
possibly nonsmooth DC function. 
The application of proximal DC algorithms to address this problem class is well-known. In this paper, we combine a proximal DC algorithm with an inexact proximal Newton-type method to propose an inexact proximal DC Newton-type method.
We demonstrate global convergence properties of the proposed method. 
In addition, we give a memoryless quasi-Newton matrix for scaled proximal mappings and  consider a two-dimensional system of semi-smooth equations that arise in calculating scaled proximal mappings. 
To efficiently obtain the scaled proximal mappings, we adopt a semi-smooth Newton method to inexactly solve the system. 
Finally, we present some numerical experiments to investigate the efficiency of the proposed method, which show that the proposed method outperforms existing methods.

\noindent
{{\bf Keywords:} Nonsmooth optimization \and 
proximal DC algorithm \and 
inexact proximal Newton-type method \and 
memoryless quasi-Newton method \and
semi-smooth Newton method}
\end{abstract}

\section{Introduction}
In this paper, we consider minimization of the following composite function:
\begin{equation}\label{min}
\min_{x\in \mathbb{R}^n}\quad f(x) := g(x)+h(x),
\end{equation}
where $g:\mathbb{R}^n\to\mathbb{R}$ is an $L$-smooth function, and 
$h:\mathbb{R}^n\to\mathbb{R}\cup\{\infty\}$ is a difference-of-convex (DC) function:
\begin{equation}\label{DCregularizer}
h(x) = h_1(x) - h_2(x),
\end{equation}
where $h_1:\mathbb{R}^n\to\mathbb{R}\cup\{\infty\}$ is a proper lower semi-continuous (lsc) convex function and $h_2:\mathbb{R}^n\to{\mathbb{R}}$ is a continuous convex function. 
This problem appears in statistics and machine learning. Typically in machine learning, $g$ is a loss function, such as the least square function, the logistic loss function, or the nonconvex quadratic function, and $h$ is a regularizer. 
Although the well-known $\ell_1$ regularizer is intended as an approximation of the $\ell_0$-norm, it is convex and not sufficient as the approximation. 
Thus, several improved approximations have been proposed, including the Smoothly Clipped Absolute Deviation (SCAD)~\cite{JASA_Fan2001,gong2013general}, the Minimax Concave Penalty (MCP)~\cite{gong2013general,AS_Zhang2010}, the $\ell_{1-2}$ regularizer~\cite{yin2015minimization}, the truncated $\ell_1$ regularizer~\cite{MP_Gotoh2018,lu2018sparse,nakayama2021superiority}, the Capped $\ell_1$ regularizer~\cite{gong2013general,zhang2010analysis}, and the Log-Sum Penalty~\cite{candes2008enhancing,gong2013general}.  
Note that these regularizers are DC functions formulated by \eqref{DCregularizer}. We note that a usual DC programming requires $g$ to be DC functions, i.e.,
\begin{equation}\label{DC_g}
g(x) = g_1(x) - g_2(x)
\end{equation}
where $g_1$ and $g_2$ are convex. Then, \eqref{min} can be regarded as DC optimization of the form
\[
f(x) = (g_1(x)+h_1(x)) - (g_2(x)+h_2(x)).\]
If necessary,  we can consider DC decomposition of $g$, but this paper directly deal with the nonconvex form.

In the case $h(x)=h_1(x)$, the proximal gradient method can be used~\cite{beck2017first,fukushima1981generalized}. 
%A
The Fast Iterative Shrinkage-Thresholding Algorithm (FISTA)~\cite{Beck2009}, which is a proximal gradient method with Nesterov's acceleration scheme, was proposed to accelerate the approach. As an alternative acceleration scheme, a proximal Newton-type method has also been studied~\cite{SIOPT_Lee2014}. Although usual proximal mappings can be easily obtained for some special cases~\cite{beck2017first,combettes2011proximal}, the computing cost of scaled proximal mapping is very expensive. Hence, inexact proximal Newton-type methods, which inexactly calculate scaled proximal mappings, have been proposed~(see, for example, \cite{byrd2016inexact,lee2019inexact,li2017inexact,COAP_Nakayama2021,scheinberg2016practical}). 
The proximal Newton-type method is also known as the Successive Quadratic Approximation (SQA) method. 

The DC Algorithm (DCA) is a classical algorithm~\cite{Acta_Tao1977} for solving DC optimization problems. In our problem settings, the proximal DCA (pDCA)~\cite{MP_Gotoh2018} can be  used. To accelerate the algorithm, Wen et al.~\cite{COAP_Wen2018} incorporated Nesterov's acceleration scheme into the pDCA, creating pDCA with extrapolation (pDCAe). Another acceleration approach is pDCA based on the Newton method \cite{rakotomamonjy2015dc}. Recently,  Liu and Takeda~\cite{OptOnline_Liu2021} extended an inexact SQA method to DC optimization.

In this paper, we propose an inexact proximal DC Newton-type method. The findings and contributions of this paper are summarized as follows.
\begin{itemize}
\item 
We propose an inexact proximal DC Newton-type method, which is an extension of the inexact proximal Newton-type method~\cite{COAP_Nakayama2021} to DC optimization problems, and show its global convergence. 
Specifically, our key contributions are concrete choices for quasi-Newton matrices of the scaled proximal mappings and an efficient numerical method for solving the subproblem. 
In the inexact SQA method proposed by Liu and Takeda~\cite{OptOnline_Liu2021}, the method requires {strong} convexity of $g$ to solve the subproblem and obtain the scaled proximal mappings. 
%and hence it is needed following DC decomposition 
%\[f(x) = \underbrace{g_1(x)}_{g(x)} + \underbrace{h_1(x) - (g_2(x)+h_2(x))}_{h(x)}\] 
%with a strongly convex $g_1$. 
On the other hand, our method (Algorithm~\ref{alg:proximal-DC-Newton}) can {be directly applied to \eqref{min} without assuming strong} convexity of $g$.
%The framework of the method (Algorithm \ref{alg:proximal-DC-Newton}) is similar to the inexact SQA method proposed by Liu and Takeda~\cite{OptOnline_Liu2021}, but the details are different. 
\item 
As mentioned above, the computing cost of scaled proximal mappings is expensive in general cases. 
In this paper, we consider (a) concrete choices for quasi-Newton matrices in scaled proximal mappings 
and (b) an efficient numerical method for computing scaled proximal mappings. Specifically, we deal with a modification of memoryless quasi-Newton matrices proposed by Nakayama et.~al.~\cite{COAP_Nakayama2021}. Then, the scaled proximal mappings can be obtained by solving a two-dimensional system of semi-smooth equations~\eqref{def:L}. To solve the semi-smooth equation, we use the semi-smooth Newton method (Algorithm~\ref{alg:semismoothNewton}). 
\item
In numerical experiments, we compare the proposed method with other existing methods and show the efficiency of the proposed method.
\end{itemize}

This paper is organized as follows. 
In Section~\ref{sec:propose}, we propose an inexact proximal DC Newton-type method.
We briefly introduce the inexact proximal Newton-type method~\cite{COAP_Nakayama2021} in Section~\ref{sec:InexactNewton}, and  we extend the inexact proximal Newton-type method to DC optimization problems in Section~\ref{sec:proposedmethod}. In Section~\ref{sec:conv}, we show the global convergence properties of the proposed method. In Section~\ref{sec:B}, we introduce the memoryless quasi-Newton formula~\cite{COAP_Nakayama2021} (Section~\ref{subsec:B}) and give an efficient method for computing scaled proximal mappings (Section~\ref{subsec:prox}). In Section~\ref{sec:numerical}, we present some numerical experiments to show the efficiency of the proposed method in comparison with existing methods. Finally, we conclude and provide remarks in Section~\ref{sec:con}.

Throughout this paper, we denote the identity matrix, the $\ell_2$ norm, and the $\ell_1$ norm by $I\in\mathbb{R}^{n\times n}$, $\|\cdot\|$, and $\|\cdot\|_1$, respectively.
For a symmetric positive definite matrix $A$ and a proper convex function $\tilde{h}$, a scaled proximal mapping is defined by
\begin{equation*}\label{ProxB}
{\rm Prox}_{\tilde{h}}^{A}(y) \equiv \argmin_{x\in \mathbb{R}^n} \left(\tilde{h}(x) + \frac{1}{2}\|x-y\|_{A}^2\right),
\end{equation*}
where $\|x\|_A=\sqrt{x^TAx}$. 
In the case $A=I$, we omit the {superscript} and it is %a 
{the} usual proximal mapping. 
Finally, $\partial \tilde{h}(\cdot)$ is the subdifferential of a convex function $\tilde{h}$, 
$\partial^C\mathcal{L}(\cdot)$ is the Clarke %Jacobian
{differential} of a nonlinear {mapping} $\mathcal{L}$, 
{
and we denote the $i$-th component of a vector $v$ by $(v)_i$.
}

\section{Inexact proximal DC Newton-type method}\label{sec:propose}
We first introduce the inexact proximal Newton-type method~\cite{COAP_Nakayama2021} in Section~\ref{sec:InexactNewton}. Then in Section~\ref{sec:proposedmethod}, we extend the method to DC optimization problems and propose an inexact proximal DC Newton-type method. 

\subsection{Inexact proximal Newton-type method}\label{sec:InexactNewton}
Consider the special case $h(x)=h_1(x)$, namely
\begin{equation*}\label{min_convex}
\min_{x\in \mathbb{R}^n}\quad f(x) := g(x)+h_1(x).
\end{equation*}
For solving the problem, we briefly {review} a framework of the inexact proximal Newton-type method proposed by Nakayama et al.~\cite{COAP_Nakayama2021}. 
The method generates a sequence $\{x_k\}$ according to
\begin{equation*}
x_{k+1}=x_k+\eta_kd_k,
\end{equation*}
where $x_k\in\mathbb{R}^n$ is the $k$-th approximation to a solution, $\eta_k>0$ is a step size and $d_k\in\mathbb{R}^n$ is a search direction given by
\begin{equation}\label{search}
d_k=x_k^+ - x_k.
\end{equation}
Here, $x_k^+$ is an approximation solution of the following subproblem
\begin{equation}\label{sub_conv}
\argmin_{x\in \mathbb{R}^n}~%\left\{
g(x_k)+\nabla g(x_k)^T(x-x_k)+\frac{1}{2}(x-x_k)^TB_k(x-x_k) + h_1(x),
%\right\}
\end{equation}
which is the sum of $h_1$ and a quadratic model of $g$ at $x_k$,
where $B_k\in\mathbb{R}^{n\times n}$ is symmetric positive definite and an approximation of the Hessian $\nabla^2g(x_k)$.
If the above minimization problem is solved exactly, then 
\begin{equation}\label{prox_exact1}
x_k^+ = {\rm Prox}_{h_1}^{B_k}(x_k - H_k\nabla g(x_k))
\end{equation}
holds, where $H_k=B_k^{-1}$. 
The optimality condition of \eqref{sub_conv} is given by 
\begin{equation*}\label{opt_prox_conv}
0 \in  \nabla g(x_k)+B_k(x_k^+ -x_k) + \partial h_1(x_k^+).
\end{equation*}
If we solve \eqref{sub_conv} inexactly, 
then there exists a gradient residual $r_k$ such that
\begin{equation*}\label{sub_opt_conv}
r_k \in  \nabla g(x_k) +B_k(x_k^+ -x_k) + \partial h_1(x_k^+).
\end{equation*}
We accept $x_k^+$ as an approximation solution of \eqref{sub_conv} if 
\begin{equation}\label{inexact}
\|r_k\|_{H_k} \leq (1-\theta_k)\|x_k^+-x_k\|_{B_k},\quad \theta_k\in[\bar\theta,1],
\end{equation}
is satisfied, where $\theta_k$ is a parameter and $\bar\theta\in(0,1]$ is a constant. 
We can find a simple example  to  achieve the above inexact condition in \cite[Section~2]{arXive_Liu2017} and \cite[Section~4]{COAP_Nakayama2021}. 

\subsection{
Inexact proximal DC Newton-type method
}
\label{sec:proposedmethod}
In this section, we propose a new algorithm, which is an extension of the inexact proximal Newton-type method introduced in Section \ref{sec:InexactNewton}. We first consider the following linear approximation of $h_2$ at $x_k$:
\[h_2(x) \approx h_2(x_k) + \xi_k^T(x - x_k),\]
where $\xi_k\in\partial h_2(x_k)$ is a subgradient. 
Combining the above and \eqref{sub_conv}, we have the following subproblems where $x_k^+$ is the solution:
\begin{align}
%&\argmin_{x\in \mathbb{R}^n}~%\left\{ 
%g(x_k) -h_2(x_k)+ (\nabla g(x_k)-\xi_k)^T(x-x_k) + \frac{1}{2}(x - x_k)^T{B_k}(x - x_k)
%+h_1(x)%\right\}
%\nonumber\\
%&=
\argmin_{x\in \mathbb{R}^n}~%\left\{ 
%g(x_k) -h_2(x_k)+ 
(\nabla g(x_k)-\xi_k)^T(x-x_k) + \frac{1}{2}\|x - x_k\|_{B_k}^2
+h_1(x).%\right\}.
\label{w_prox_grad} 
\end{align}
Similarly to \eqref{prox_exact1},  if \eqref{w_prox_grad} is solved exactly, then 
\begin{equation}\label{ProxB_exact}
x_k^+ = \ProxB{x_k-H_k(\nabla g(x_k)-\xi_k)}
\end{equation}
holds. 
If we solve \eqref{w_prox_grad} inexactly, namely, %and find $x_k^+$ such that
\begin{equation}\label{inexact_prox}
x_k^+ \approx \ProxB{x_k-H_k(\nabla g(x_k)-\xi_k)},
\end{equation}
then there exists a gradient residual $r_k$ such that
\begin{equation}\label{sub_opt}
r_k \in  \nabla g(x_k)-\xi_k+B_k(x_k^+ -x_k) + \partial h_1(x_k^+).
\end{equation}
We accept $x_k^+$ as (\ref{inexact_prox}) if \eqref{inexact} is satisfied. 
%We can find $x_k^+$ in a similar to the case  the previous subsection. Specifically, w
We give a concrete choice of $r_k$ and a numerical method in Section~\ref{subsec:prox}.%Proposition~\ref{prop:res} 
%for the proposed method.

%Let us give the algorithm. 
We define a search direction $d_k$ by \eqref{search}.
For the line search, we select the step size $\eta_k$ satisfying the condition 
\begin{equation}\label{linecond}
	f(x_k+\eta_k d_k) \leq f(x_k)+\delta\eta_k((\nabla g(x_k)-\xi_k)^Td_k + h_1(x_k^+) - h_1(x_k))
\end{equation}
by using backtracking scheme, where $\delta\in(0,1)$. %quadratic etc
Summarizing the above arguments, we give Algorithm~\ref{alg:proximal-DC-Newton}.
%%% Algorithm %%%%%%%
\begin{algorithm}[h]
\caption{Inexact proximal DC Newton-type method}
\label{alg:proximal-DC-Newton}
\begin{algorithmic}
\Require{$x_0\in {\rm dom}(f)$, $\delta\in(0,1)$, $\bar\theta\in(0,1]$, $0<\beta_{min}\leq\beta_{max}<1$, $\varepsilon>0$}
\For{$k=0,1,2...$}
\State Choose $B_k$. 
\State Choose $\xi_k\in\partial h_2(x_k)$ and $\theta_k\in[\bar\theta, 1]$.
\State Compute $x_k^+$ %\eqref{inexact_prox}
  satisfying \eqref{sub_opt} and \eqref{inexact}.
\State $d_k \leftarrow x_k^+ - x_k$
\If{
the stopping condition $\|d_k\|\leq \varepsilon$ is satisfied 
}
%\If{a stopping condition is satisfied}
\State stop.
\EndIf
\State $\rho\leftarrow 1$
\While{condition \begin{align*}
f(x_k+\rho d_k) \leq f(x_k)+\delta\rho((\nabla g(x_k)-\xi_k)^Td_k + h_1(x_k^+) - h_1(x_k))
\end{align*} 
\State is not satisfied}
\State Choose $\beta\in[\beta_{min},\beta_{max}]$.
\State $\rho \leftarrow \beta\rho$
\EndWhile
\State $\eta_k \leftarrow \rho$
\State $x_{k+1} \leftarrow x_k + \eta_kd_k$
\EndFor
\end{algorithmic}
\end{algorithm}

In Algorithm~\ref{alg:proximal-DC-Newton}, we adopt $\|d_k\|\leq \varepsilon$ as a stopping condition, 
because $d_k=0$ implies that $x_k$ is a critical point (see Theorem~\ref{lem_opt} in Section~\ref{sec:conv}). 
Though we can use another stopping condition, we must in this case use the same stopping condition in the algorithm for finding $x_k^+$ (namely, Algorithm~\ref{alg:semismoothNewton} in Section~\ref{subsec:prox}). 

We note that Algorithm~\ref{alg:proximal-DC-Newton} is identical to the inexact proximal Newton-type method~\cite{COAP_Nakayama2021} when $h_2=0$ ($\xi_k=0$), and that it corresponds to pDCA~\cite{MP_Gotoh2018} when $B_k=L\times I$ and $\eta_k=1$ for all $k$. 
Although the algorithm is similar to the method proposed by Liu and Takeda~\cite{OptOnline_Liu2021}, the inexact role of the subproblem and the line search scheme are different.

\section{Convergence properties}\label{sec:conv}
In this section, we show the global convergence of Algorithm \ref{alg:proximal-DC-Newton}. 
Throughout this paper, we use the following definition~\cite{MP_Gotoh2018,Acta_Tao1977}.
\begin{definition}
If %the following condition: 
\begin{equation}\label{crit}
0 \in \nabla g(x^\ast) + \partial h_1(x^\ast) - \partial h_2(x^\ast)
\end{equation}
holds, then we call $x^\ast$ a critical point of \eqref{min}.
\end{definition}

Note that the above condition is a weaker condition than the directional stationary condition, which implies that $\tilde{x}$ satisfies
\begin{equation}\label{d-stat}
{f'(\tilde{x};d):=
}
\lim_{\eta\to {0_+}} \frac{f(\tilde{x}+\eta d) - f(\tilde{x})}{\eta} \geq 0
\quad \text{for all}\quad 
d\in\mathbb{R}^n.
\end{equation}
We call $\tilde{x}$ a directional stationary point of \eqref{min} if \eqref{d-stat} holds. 
Almost all pDCA type methods aim to find a critical point. We show that the proposed method converges to a critical point. 
%However, under suitable assumptions, the two conditions are equivalent (see, for example~\cite{cui2018composite,nakayama2021superiority}). 

To show the global convergence, we make the following standard assumptions. % for objective functions and the algorithm. 
%For objective function
\begin{assumption}\label{ass:Lip}
\begin{enumerate}
\item
The function $g:\mathbb{R}^n\to{\mathbb{R}}$ is continuously differentiable and its gradient $\nabla g$ is Lipschitz continuous, namely, there exists a positive constant $L$ such that
\begin{equation}\label{Lip_Ass}
\|\nabla g(u)-\nabla g(v)\| \leq L\|u-v\|,\quad \forall u,v\in \mathbb{R}^n.
\end{equation}
\item
$h_1:\mathbb{R}^n\to\mathbb{R}\cup\{\infty\}$ is a proper lsc convex function and $h_2:\mathbb{R}^n\to{\mathbb{R}}$ is a continuous convex function.
\end{enumerate}
\end{assumption}
%For algorithm
\begin{assumption}\label{ass:mat}
There exist positive constants $m$ and $M$ such that
\begin{equation}\label{uniformly_mat}
m \|u\|^2 \leq \|u\|_{B_k}^2= u^TB_ku \leq M \|u\|^2\quad \forall  u\in \mathbb{R}^n.
\end{equation}
\end{assumption}
%\subsection{Global convergence}\label{sec:global}
%%

We first give the following lemma. This is a DCA version of \cite[Lemma~2]{COAP_Nakayama2021}, so the proof given in Appendix \ref{Appendix:lemma1} for self-containedness is similar to the proof in \cite{COAP_Nakayama2021}. 
\begin{lemma}\label{lem_line}
Suppose that Assumptions \ref{ass:Lip}--\ref{ass:mat} hold. Let the sequence $\{x_k\}$ be generated by Algorithm \ref{alg:proximal-DC-Newton}. Then for all $\eta\in(0,1]$, 
\begin{equation}\label{lem_alpha}
f(x_k+\eta d_k) \leq f(x_k)+\eta((\nabla g(x_k)-\xi_k)^Td_k + h_1(x_k^+) - h_1(x_k))+\frac{\eta^2L}{2}\|d_k\|^2,
\end{equation}
and
\begin{equation}\label{decrease2}
(\nabla g(x_k)-\xi_k)^Td_k + h_1(x_k^+) - h_1(x_k) \leq - \bar\theta \|d_k\|_{B_k}^2
\end{equation}
are satisfied.
\end{lemma}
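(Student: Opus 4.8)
The plan is to prove the two inequalities separately, both essentially by unwinding definitions.

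For \eqref{lem_alpha}, I would start from the descent lemma for $L$-smooth functions: since $\nabla g$ is Lipschitz with constant $L$ (Assumption~\ref{ass:Lip}), we have $g(x_k+\eta d_k) \le g(x_k) + \eta\nabla g(x_k)^T d_k + \frac{\eta^2 L}{2}\|d_k\|^2$. Then I would handle the $h$-part: writing $h = h_1 - h_2$, I would use convexity of $h_1$ to bound $h_1(x_k+\eta d_k) = h_1((1-\eta)x_k + \eta x_k^+) \le (1-\eta)h_1(x_k) + \eta h_1(x_k^+)$, which uses $\eta\in(0,1]$ and $x_k+\eta d_k = (1-\eta)x_k+\eta x_k^+$. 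For $-h_2$, convexity of $h_2$ gives $h_2(x_k+\eta d_k) \ge h_2(x_k) + \eta\xi_k^T d_k$ for $\xi_k\in\partial h_2(x_k)$, hence $-h_2(x_k+\eta d_k) \le -h_2(x_k) - \eta\xi_k^T d_k$. Adding these three estimates and recognizing that $f(x_k) = g(x_k)+h_1(x_k)-h_2(x_k)$ yields exactly \eqref{lem_alpha}.

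For \eqref{decrease2}, the key is the inexact optimality condition \eqref{sub_opt}: there is $r_k\in\nabla g(x_k)-\xi_k + B_k(x_k^+-x_k) + \partial h_1(x_k^+)$, so $r_k - \nabla g(x_k)+\xi_k - B_k d_k \in \partial h_1(x_k^+)$. The subgradient inequality for $h_1$ at $x_k^+$ evaluated at $x_k$ gives $h_1(x_k) \ge h_1(x_k^+) + (r_k - \nabla g(x_k)+\xi_k - B_k d_k)^T(x_k - x_k^+)$, i.e. $h_1(x_k^+) - h_1(x_k) \le (r_k - \nabla g(x_k)+\xi_k - B_k d_k)^T d_k$. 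Rearranging, $(\nabla g(x_k)-\xi_k)^T d_k + h_1(x_k^+)-h_1(x_k) \le r_k^T d_k - d_k^T B_k d_k = r_k^T d_k - \|d_k\|_{B_k}^2$. Now I would bound $r_k^T d_k$ using Cauchy--Schwarz in the $B_k$/$H_k$ pairing: $r_k^T d_k = (H_k^{1/2}r_k)^T(B_k^{1/2}d_k) \le \|r_k\|_{H_k}\|d_k\|_{B_k}$. Applying the inexactness criterion \eqref{inexact}, $\|r_k\|_{H_k} \le (1-\theta_k)\|d_k\|_{B_k} \le (1-\bar\theta)\|d_k\|_{B_k}$ since $\theta_k\ge\bar\theta$. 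Hence $r_k^T d_k \le (1-\bar\theta)\|d_k\|_{B_k}^2$, and substituting gives $(\nabla g(x_k)-\xi_k)^T d_k + h_1(x_k^+)-h_1(x_k) \le (1-\bar\theta)\|d_k\|_{B_k}^2 - \|d_k\|_{B_k}^2 = -\bar\theta\|d_k\|_{B_k}^2$, which is \eqref{decrease2}.

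I do not anticipate a serious obstacle here; this is a routine adaptation of \cite[Lemma~2]{COAP_Nakayama2021} with the extra linear term $-\xi_k^T(x-x_k)$ coming from linearizing $h_2$. The one place to be careful is the treatment of $h_2$: the argument must use only a fixed subgradient $\xi_k\in\partial h_2(x_k)$ (the same one appearing in the algorithm and in \eqref{sub_opt}), and exploit convexity of $h_2$ in the correct direction so that the linear underestimate of $h_2$ becomes an overestimate of $-h_2$. Everything else is convexity of $h_1$, the descent lemma, Cauchy--Schwarz in the $B_k$-inner product, and the acceptance test \eqref{inexact}.
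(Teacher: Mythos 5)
Your proposal is correct and follows essentially the same route as the paper's own proof: the descent lemma for $g$, convexity of $h_1$ along the segment $x_k+\eta d_k=(1-\eta)x_k+\eta x_k^+$, the subgradient inequality of $h_2$ at $x_k$ with the fixed $\xi_k$, and for \eqref{decrease2} the inclusion $r_k-\nabla g(x_k)+\xi_k-B_kd_k\in\partial h_1(x_k^+)$ combined with Cauchy--Schwarz in the $B_k$/$H_k$ pairing and the acceptance test \eqref{inexact}. No gaps.
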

%%%%%%%%%%%%%%%%%%%%%%%%%%%%
\begin{remark}
Since it follows from \eqref{uniformly_mat}, \eqref{lem_alpha} and \eqref{decrease2} that
\begin{equation}\label{descent}
f(x_k+\eta d_k) \leq f(x_k) +
\eta\left(\frac{\eta L}{2m} - \bar\theta\right)\|d_k\|^2_{B_k}
\end{equation}
holds, 
$f(x_k+\eta d_k) \leq f(x_k)$ if $\eta < \frac{2m}{L}\bar\theta$. Therefore, 
the sequence $\{f(x_k)\}$ is decreasing.
\end{remark}
%%%%%%
The next lemma implies that $\eta_k$ is bounded away from 0.  
This lemma is a DCA version of \cite[Lemma~3]{COAP_Nakayama2021} and follows from a similar proof (see Appendix~\ref{Appendix:lemma2}). 
\begin{lemma}\label{alpha_lemma}
Suppose that all assumptions of Lemma \ref{lem_line} are  satisfied.
Then there exists $\eta_k$ such that the line search condition \eqref{linecond} is satisfied.
Moreover, the following holds:
\begin{equation}\label{alpha_lem_back}
\bar\eta \equiv \beta_{min}\min\left\{1, \frac{2m}{L}\bar\theta(1-\delta)\right\}\leq\eta_k \leq 1.
\end{equation}
\end{lemma}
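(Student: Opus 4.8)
The plan is to establish existence of an acceptable step size by exhibiting an explicit threshold below which the line search condition \eqref{linecond} is automatically satisfied, and then to track what the backtracking scheme can return. First I would combine the two estimates from Lemma \ref{lem_line}: substituting \eqref{decrease2} into \eqref{lem_alpha} gives, for every $\eta\in(0,1]$,
\[
f(x_k+\eta d_k) \le f(x_k) + \eta\big((\nabla g(x_k)-\xi_k)^Td_k + h_1(x_k^+)-h_1(x_k)\big) + \frac{\eta^2 L}{2m}\|d_k\|_{B_k}^2 .
\]
Write $\Delta_k := (\nabla g(x_k)-\xi_k)^Td_k + h_1(x_k^+)-h_1(x_k)$, which by \eqref{decrease2} satisfies $\Delta_k \le -\bar\theta\|d_k\|_{B_k}^2 \le 0$. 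The Armijo-type condition \eqref{linecond} reads $f(x_k+\eta d_k) \le f(x_k) + \delta\eta\Delta_k$, so it suffices to force $\frac{\eta^2 L}{2m}\|d_k\|_{B_k}^2 \le (\delta-1)\eta\Delta_k = (1-\delta)\eta(-\Delta_k)$. Using $-\Delta_k \ge \bar\theta\|d_k\|_{B_k}^2$, this holds as soon as $\frac{\eta L}{2m} \le (1-\delta)\bar\theta$, i.e. whenever $\eta \le \frac{2m}{L}\bar\theta(1-\delta)$ (the case $d_k=0$ being trivial since then the algorithm has already stopped). Hence any $\eta\in(0,1]$ with $\eta \le \frac{2m}{L}\bar\theta(1-\delta)$ satisfies \eqref{linecond}, which proves existence and shows the backtracking loop terminates.

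Next I would derive the lower bound in \eqref{alpha_lem_back}. The backtracking starts from $\rho=1$ and multiplies by factors $\beta\in[\beta_{min},\beta_{max}]$ until acceptance. If $\rho=1$ is already accepted, then $\eta_k=1 \ge \bar\eta$ since $\bar\eta\le\beta_{min}\le 1$. Otherwise, let $\rho'$ be the last rejected trial value before acceptance, so $\eta_k = \beta\rho'$ for some $\beta\in[\beta_{min},\beta_{max}]$; since $\rho'$ was rejected, by the contrapositive of the previous paragraph we must have $\rho' > \frac{2m}{L}\bar\theta(1-\delta)$ (if the threshold exceeds $1$, then $\rho'$ can only be rejected if $\rho' > 1$, but the trials never exceed $1$, so this branch cannot occur and $\eta_k = 1$). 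Therefore $\eta_k = \beta\rho' \ge \beta_{min}\rho' > \beta_{min}\cdot\frac{2m}{L}\bar\theta(1-\delta)$, and combining both branches gives $\eta_k \ge \beta_{min}\min\{1, \frac{2m}{L}\bar\theta(1-\delta)\} = \bar\eta$. The upper bound $\eta_k\le 1$ is immediate because the first trial is $\rho=1$ and subsequent trials only decrease $\rho$.

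The main subtlety — not a deep one, but the place where care is needed — is the bookkeeping of the backtracking loop: one must correctly relate the accepted $\eta_k$ to the last rejected trial and handle the degenerate possibility that $\frac{2m}{L}\bar\theta(1-\delta) \ge 1$, in which case $\rho=1$ is accepted immediately and no rejection ever happens. Handling these cases cleanly is what makes the $\min\{1,\cdot\}$ appear in the bound. Everything else reduces to the elementary quadratic-in-$\eta$ argument of the first paragraph, which is exactly the DCA analogue of \cite[Lemma~3]{COAP_Nakayama2021}; the presence of the subgradient term $\xi_k$ changes nothing since it has already been absorbed into $\Delta_k$ via Lemma \ref{lem_line}.
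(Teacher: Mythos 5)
Your proposal is correct and follows essentially the same route as the paper: combine \eqref{lem_alpha}, \eqref{decrease2} and \eqref{uniformly_mat} to show that \eqref{linecond} holds automatically for every $\eta\le\min\{1,\tfrac{2m}{L}\bar\theta(1-\delta)\}$, and then read off the bound \eqref{alpha_lem_back} from the backtracking rule with factors in $[\beta_{min},\beta_{max}]$. Your extra bookkeeping about the last rejected trial and the degenerate case where the threshold exceeds $1$ simply makes explicit what the paper states in one line, so there is no substantive difference.
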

Using 
Lemmas \ref{lem_line}--\ref{alpha_lemma}, we obtain the following global convergence theorems of Algorithm~\ref{alg:proximal-DC-Newton}.
\begin{theorem}\label{lem_opt}
Suppose that Assumptions \ref{ass:Lip}--\ref{ass:mat} hold. Let the sequence $\{x_k\}$ be generated by Algorithm \ref{alg:proximal-DC-Newton}. Then, the following statements hold:
\begin{enumerate}
\item[(i)]
If $d_k=0$, then $x_k$ is a critical point of $\eqref{min}$. 
\item[(ii)]
If {$f$ is directionally differentiable and} 
 $x_k$ is a directional stationary point of $\eqref{min}$, 
then $d_k=0$.
\end{enumerate}
\end{theorem}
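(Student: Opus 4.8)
The plan is to prove the two parts independently, each by a short reduction to the inexact subproblem conditions \eqref{sub_opt}--\eqref{inexact} together with Lemma~\ref{lem_line} and Assumption~\ref{ass:mat}. For part (i), I would start from the hypothesis $d_k=0$, i.e.\ $x_k^+=x_k$. Plugging this into the accuracy condition \eqref{inexact} gives $\|r_k\|_{H_k}\le(1-\theta_k)\|x_k^+-x_k\|_{B_k}=0$; since $H_k=B_k^{-1}$ is positive definite by Assumption~\ref{ass:mat}, $\|\cdot\|_{H_k}$ is a genuine norm, so $r_k=0$. Substituting $r_k=0$ and $x_k^+=x_k$ into \eqref{sub_opt} yields $0\in\nabla g(x_k)-\xi_k+\partial h_1(x_k)$, and since $\xi_k\in\partial h_2(x_k)$ this is exactly the critical-point inclusion \eqref{crit}. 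This part is pure bookkeeping.

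For part (ii) the idea is to bound the directional derivative $f'(x_k;d_k)$ from above by the quantity controlled in \eqref{decrease2} and then invoke directional stationarity. Writing $f=g+h_1-h_2$ and using additivity of directional derivatives along $d_k$ (legitimate because $g$ is smooth and $h_1,h_2$ are convex, with $x_k,x_k^+\in\operatorname{dom}h_1$), I would record $f'(x_k;d_k)=\nabla g(x_k)^Td_k+h_1'(x_k;d_k)-h_2'(x_k;d_k)$. Two elementary convexity estimates then apply: monotonicity of the difference quotient gives $h_1'(x_k;d_k)\le h_1(x_k+d_k)-h_1(x_k)=h_1(x_k^+)-h_1(x_k)$, and $\xi_k\in\partial h_2(x_k)$ gives $h_2'(x_k;d_k)\ge\xi_k^Td_k$. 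Combining these, $f'(x_k;d_k)\le(\nabla g(x_k)-\xi_k)^Td_k+h_1(x_k^+)-h_1(x_k)$.

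Now I would apply \eqref{decrease2} of Lemma~\ref{lem_line} to the right-hand side and then the lower bound in \eqref{uniformly_mat}, obtaining $f'(x_k;d_k)\le-\bar\theta\|d_k\|_{B_k}^2\le-\bar\theta m\|d_k\|^2$. On the other hand, since $x_k$ is a directional stationary point, \eqref{d-stat} with $d=d_k$ gives $f'(x_k;d_k)\ge0$. The two inequalities force $\bar\theta m\|d_k\|^2\le0$, hence $d_k=0$. The only real point requiring care — the main (minor) obstacle — is justifying the directional-derivative calculus: one must ensure $h_1'(x_k;d_k)$ is well defined and bounded above by $h_1(x_k^+)-h_1(x_k)$, which uses $x_k\in\operatorname{dom}(f)\subseteq\operatorname{dom}(h_1)$ and $x_k^+\in\operatorname{dom}(h_1)$ (the latter because $x_k^+$ satisfies \eqref{sub_opt}, so $\partial h_1(x_k^+)\neq\emptyset$), together with the assumed directional differentiability of $f$; everything else is substitution or a direct appeal to Lemma~\ref{lem_line} and Assumption~\ref{ass:mat}.
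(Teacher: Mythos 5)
Your proposal is correct. Part (i) is essentially the paper's argument verbatim: $d_k=0$ forces $x_k^+=x_k$ and, via \eqref{inexact}, $r_k=0$, after which \eqref{sub_opt} and $\xi_k\in\partial h_2(x_k)$ give the inclusion \eqref{crit}. For part (ii) you take a slightly different route from the paper, though both hinge on the same key inequality \eqref{decrease2}. The paper simply divides the already-established bound \eqref{descent} by $\eta$ and lets $\eta\to 0_+$, so the convexity estimates for $h_1$ and $h_2$ and the $L$-smoothness of $g$ are all hidden inside Lemma~\ref{lem_line}, and no directional-derivative calculus is needed beyond the definition \eqref{d-stat}; in particular the quadratic term $\tfrac{\eta L}{2m}\|d_k\|_{B_k}^2$ just vanishes in the limit. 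You instead decompose $f'(x_k;d_k)$ as $\nabla g(x_k)^Td_k+h_1'(x_k;d_k)-h_2'(x_k;d_k)$ and re-derive the two convexity bounds (monotone difference quotient for $h_1$, subgradient inequality for $h_2$) at the level of directional derivatives, then apply \eqref{decrease2} and \eqref{uniformly_mat}. This buys you a proof of (ii) that never touches the Lipschitz constant $L$ or the descent estimate \eqref{lem_alpha}, but it costs the extra care you correctly flag: justifying the sum rule for directional derivatives (which works here because $g$ is smooth, $h_2$ is finite convex, and $h_1'(x_k;d_k)$ exists in $[-\infty,\infty)$ by monotonicity, with $x_k,x_k^+\in\operatorname{dom}h_1$, the latter because $\partial h_1(x_k^+)\neq\emptyset$ by \eqref{sub_opt}). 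The paper's limit-of-quotients argument sidesteps these domain and additivity issues entirely, which is why it is the shorter write-up; your version is equally valid and marginally more self-contained with respect to Lemma~\ref{lem_line}.
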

\begin{proof}
(i) If $d_k=0$, then it follows from \eqref{inexact} that $r_k=0$, and, hence, $x_k^+=x_k$ holds.
Thus, the condition \eqref{sub_opt} and $\xi_k\in \partial h_2(x_k)$ yield
\[0 \in \nabla g(x_k) + \partial h_1(x_k) - \xi_k \subseteq \nabla g(x_k) + \partial h_1(x_k) - \partial h_2(x_k).\]
\noindent
(ii) 
It follows from \eqref{descent} that
\[
\frac{f(x_k+\eta d_k) - f(x_k)}{\eta} 
 \leq -\bar\theta \|d_k\|^2_{B_k} +\frac{\eta L}{2m}\|d_k\|_{B_k}^2.
\]
Since \eqref{d-stat} with $\tilde{x}=x_k$ yields
\[
0\leq \lim_{\eta\to{0_+}}\frac{f(x_k+\eta d_k) - f(x_k)}{\eta}
\leq -\bar\theta \|d_k\|^2_{B_k},
\]
we have $d_k=0$ by \eqref{uniformly_mat}. 

Therefore, the proof is complete.
\end{proof}

The theorem suggests that $d_k\neq0$ is possible when $x_k$ is a critical point, but not when it is a directional stationary point. This is a desirable property, because the directional stationary condition is a stronger condition than \eqref{crit}.

In the rest of this section, we assume $\|d_k\|\ne 0$ for all $k$. Otherwise, a critical point has already been found. 
The next theorem means that the proposed method converges globally to a critical point.

\begin{theorem}\label{mainThm1}
Suppose that Assumptions \ref{ass:Lip}--\ref{ass:mat} hold. Let the sequence $\{x_k\}$ be generated by Algorithm \ref{alg:proximal-DC-Newton}. 
If the objective function $f$ is bounded below, then
\begin{equation}\label{dto0}
\lim_{k\to\infty}\|d_k\|=0.
\end{equation}
Furthermore, if $\{x_k\}$ is bounded, then any accumulation point of  $\{x_k\}$ is  a critical point of $\eqref{min}$.
\end{theorem}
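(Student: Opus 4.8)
The plan is to derive the convergence of $\|d_k\|$ to zero from the sufficient decrease of the objective along iterations, and then to pass to the limit in the inexact optimality condition \eqref{sub_opt} to identify accumulation points as critical points.

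First I would establish \eqref{dto0}. Combining the line search condition \eqref{linecond} with the descent estimate \eqref{decrease2} from Lemma~\ref{lem_line}, we obtain
\[
f(x_{k+1}) \leq f(x_k) - \delta \eta_k \bar\theta \|d_k\|_{B_k}^2 \leq f(x_k) - \delta \bar\eta \bar\theta m \|d_k\|^2,
\]
where the last inequality uses the lower bound $\eta_k \geq \bar\eta$ from Lemma~\ref{alpha_lemma} and the uniform bound $\|d_k\|_{B_k}^2 \geq m\|d_k\|^2$ from Assumption~\ref{ass:mat}. Since $f$ is bounded below and $\{f(x_k)\}$ is nonincreasing, it converges; telescoping the above inequality gives $\sum_{k=0}^\infty \|d_k\|^2 < \infty$, hence $\|d_k\| \to 0$.

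Next, suppose $\{x_k\}$ is bounded and let $x^\ast$ be an accumulation point, say $x_{k_j} \to x^\ast$. Since $d_{k_j} \to 0$, we have $x_{k_j}^+ = x_{k_j} + d_{k_j} \to x^\ast$ as well. From \eqref{sub_opt} there exist $v_{k_j} \in \partial h_1(x_{k_j}^+)$ with
\[
v_{k_j} = r_{k_j} - \nabla g(x_{k_j}) + \xi_{k_j} - B_{k_j}(x_{k_j}^+ - x_{k_j}).
\]
The inexactness criterion \eqref{inexact} together with Assumption~\ref{ass:mat} gives $\|r_{k_j}\| \to 0$ because $\|r_{k_j}\| \leq C\|r_{k_j}\|_{H_{k_j}} \leq C(1-\bar\theta)\|d_{k_j}\|_{B_{k_j}} \to 0$; the term $B_{k_j}(x_{k_j}^+-x_{k_j})$ is bounded in norm by $M\|d_{k_j}\| \to 0$; and $\nabla g(x_{k_j}) \to \nabla g(x^\ast)$ by continuity. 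For the subgradient $\xi_{k_j} \in \partial h_2(x_{k_j})$, boundedness of $\{x_{k_j}\}$ and local boundedness of the subdifferential of the finite-valued convex function $h_2$ imply $\{\xi_{k_j}\}$ is bounded, so along a further subsequence $\xi_{k_j} \to \xi^\ast$, and by closedness of the graph of $\partial h_2$ we get $\xi^\ast \in \partial h_2(x^\ast)$. Then $v_{k_j}$ converges to $v^\ast := -\nabla g(x^\ast) + \xi^\ast$; using closedness of the graph of $\partial h_1$ (valid since $h_1$ is proper lsc convex) and $x_{k_j}^+ \to x^\ast$, we conclude $v^\ast \in \partial h_1(x^\ast)$, i.e.
\[
0 \in \nabla g(x^\ast) + \partial h_1(x^\ast) - \xi^\ast \subseteq \nabla g(x^\ast) + \partial h_1(x^\ast) - \partial h_2(x^\ast),
\]
so $x^\ast$ is a critical point of \eqref{min}.

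The main obstacle is the handling of the two subgradient sequences $\{\xi_k\}$ and $\{v_k \in \partial h_1(x_k^+)\}$: one must argue their boundedness (for $\xi_k$ this follows from local boundedness of $\partial h_2$ on the bounded set containing $\{x_{k_j}\}$; for $v_k$ boundedness is a consequence of the identity above once the other terms are controlled) and then invoke outer semicontinuity (closed graph) of the convex subdifferentials to keep the limit inside $\partial h_2(x^\ast)$ and $\partial h_1(x^\ast)$ respectively. Care is also needed that $h_1$ may be extended-real-valued, but since $x_{k_j}^+ \in \mathrm{dom}(h_1)$ along the sequence and $f(x_k)$ is bounded, lower semicontinuity and the closed-graph property of $\partial h_1$ suffice; no continuity of $h_1$ at $x^\ast$ is required.
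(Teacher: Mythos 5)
Your proposal is correct and follows essentially the same route as the paper: sufficient decrease from \eqref{linecond}, \eqref{decrease2}, \eqref{alpha_lem_back} and Assumption~\ref{ass:mat} gives $\|d_k\|\to 0$, then \eqref{inexact} forces $\|r_k\|\to 0$ and one passes to the limit in \eqref{sub_opt} using closedness of the subdifferentials. Your treatment of the subgradient sequences (local boundedness of $\partial h_2$, extraction of a further subsequence, closed-graph arguments for both $\partial h_1$ and $\partial h_2$) merely spells out details that the paper's proof states tersely, so no substantive difference remains.
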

\begin{proof}
From Lemma \ref{alpha_lemma}, there exists a step size satisfying the line search  condition \eqref{linecond}.
Therefore, by \eqref{linecond}, \eqref{uniformly_mat}, \eqref{decrease2}, and \eqref{alpha_lem_back}, we have
\begin{align*}
f(x_{k+1})-f(x_k) & \leq \delta \eta_k ((\nabla g(x_k)-\xi_k)^Td_k + h_1(x_k^+) - h_1(x_k))\\
  &\leq -\delta\bar\eta \bar\theta \|d_k\|_{B_k}^2\\
  &\leq -\delta\bar\eta \bar\theta m\|d_k\|^2\\
  &\leq0.
\end{align*}
Hence the sequence $\{f(x_k)\}$ is nonincreasing. 
Since $f$ is bounded below, the sequence $\{f(x_k)\}$ must converge to some limit, 
which implies that
\[\lim_{k\to\infty} \left\{f(x_{k+1})-f(x_k)\right\}=0.\]
Thus,  \eqref{dto0} holds.
It follows from \eqref{inexact}, \eqref{uniformly_mat} and \eqref{dto0} that
\[\lim_{k\to\infty}\|r_k\|=0.\]
Let $\bar{x}$ be an accumulation point of $\{x_k\}$. 
Since $\partial h_1$ is closed and $\xi_k\to\bar\xi\in\partial h_2(\bar{x})$,  it follows from \eqref{search}, \eqref{sub_opt}, and \eqref{dto0} that
\[0\in \nabla g(\bar{x})+\partial h_1(\bar{x}) - \bar\xi \subseteq \nabla g(\bar{x})+\partial h_1(\bar{x}) - \partial h_2(\bar{x}),\]
completing the proof.
\end{proof}

\section{Choices of $B_k$ and computing scaled proximal mappings}\label{sec:B}
We present concrete choices of $B_k$ in Section~\ref{subsec:B} and a numerical method for solving subproblem \eqref{w_prox_grad} in Section~\ref{subsec:prox}.

\subsection{Memoryless quasi-Newton matrices}\label{subsec:B}
In this subsection, we establish concrete choices of $B_k$.  
For this purpose, we first consider the quasi-Newton updating formula and introduce the modified spectral scaling Broyden family proposed by Nakayama et al.~\cite[Eq.~(13)]{COAP_Nakayama2021}:
\begin{equation}\label{SBroyden_B1}
B_k  = B_{k-1} - \frac{ B_{k-1}s_{k-1}s_{k-1}^TB_{k-1} }{ s_{k-1}^TB_{k-1} s_{k-1} } + \gamma_k\frac{z_{k-1}z_{k-1}^T}{ s_{k-1}^Tz_{k-1} }
  + \hat\phi_k \hat{v}_{k-1} \hat{v}_{k-1}^T,
\end{equation}
\begin{equation*}\label{SBroyden_v1}
\hat{v}_{k-1} = \sqrt{s_{k-1}^T B_{k-1}s_{k-1}}\left(\dfrac{z_{k-1}}{s_{k-1}^Tz_{k-1}}-\dfrac{ B_{k-1}s_{k-1}}{s_{k-1}^TB_{k-1} s_{k-1}}\right),
\end{equation*}
where $\gamma_k>0$ is a scaling parameter, $\hat\phi_k$ is a parameter of the Broyden family,  
\begin{align}
s_{k-1}=x_k-x_{k-1}\quad\text{and}\quad z_{k-1}=(\nabla g(x_k) - \nabla g(x_{k-1}))+ \nu_ks_{k-1},
\label{def:sz}
\end{align}
where $\nu_k\geq0$ is a modified parameter such that $\nu_k\le\bar\nu$ and
\begin{equation}\label{sz>m}
s_{k-1}^Tz_{k-1}=s_{k-1}^T((\nabla g(x_k) - \nabla g(x_{k-1}))+\nu_k s_{k-1}) \ge  \underline{\nu} \|s_{k-1}\|^2
\end{equation}
hold for fixed constants $\underline\nu$ and $\bar\nu$. 
%To  achieve \eqref{sz>m},
{
In our numerical experiments (Section~\ref{sec:numerical}), to  achieve \eqref{sz>m}, we use 
\begin{equation}\label{nu}
\nu_k=\begin{cases}
0, & \text{if}~s_{k-1}^Ty_{k-1} \geq  \tilde\nu\|s_{k-1}\|^2\\
\max\left\{0,-\frac{s_{k-1}^Ty_{k-1}}{s_{k-1}^Ts_{k-1}}\right\} +  \tilde\nu, & \text{otherwise},
\end{cases}
\end{equation}
which is called Li-Fukushima's regularization~\cite{LiFu2001}, where $\tilde\nu>0$ is a constant parameter.
We note that  \eqref{sz>m} is satisfied with $\underline\nu=\tilde\nu$ and $\bar\nu=L+\tilde\nu$ when \eqref{Lip_Ass} holds.
}
%which is called Li-Fukushima's regularization~\cite{LiFu2001}, in Section~\ref{sec:numerical}. 
If we choose $\hat\phi_k$ such that $\hat\phi_k>\hat\phi_k^\ast$, then $B_k$ updated by \eqref{SBroyden_B1}  is symmetric positive definite, where
\begin{equation}\label{Bphi*}
\hat\phi_k^\ast =-\frac{(s_{k-1}^Tz_{k-1})^2}{(s_{k-1}^TB_{k-1}s_{k-1})(z_{k-1}^TB_{k-1}^{-1}z_{k-1}) - (s_{k-1}^Tz_{k-1})^2 }<0.
\end{equation}
Furthermore, Nakayama et al.~\cite{COAP_Nakayama2021} proposed the memoryless modified spectral scaling Broyden family, which is given by \eqref{SBroyden_B1} with $B_{k-1}=I$. In this paper, we improve the method by applying a sizing technique, and proposing \eqref{SBroyden_B1} with $B_{k-1}=\tau_kI$:
\begin{equation}\label{SBroyden_B}
B_k  = \tau_kI - \tau_k\frac{s_{k-1}s_{k-1}^T}{ s_{k-1}^Ts_{k-1} } + \gamma_k\frac{z_{k-1}z_{k-1}^T}{ s_{k-1}^Tz_{k-1} }
  + \tau_k\phi_k v_{k-1} v_{k-1}^T,
\end{equation}
\begin{equation*}\label{SBroyden_v}
v_{k-1} = \sqrt{s_{k-1}^Ts_{k-1}}\left(\dfrac{z_{k-1}}{s_{k-1}^Tz_{k-1}}-\dfrac{ s_{k-1}}{s_{k-1}^T s_{k-1}}\right),
\end{equation*}
where $\tau_k>0$ is a sizing parameter. 
Note that sizing is a standard  technique for the quasi-Newton method (see, for example \cite{nocedal2006numerical,sun2006optimization}). 
If we choose $\phi_k$ such that $\phi_k>\phi_k^\ast$, then $B_k$ updated by \eqref{SBroyden_B}  is symmetric positive definite, where
\begin{equation*}\label{phi*}
\phi_k^\ast =-\frac{(s_{k-1}^Tz_{k-1})^2}{(s_{k-1}^Ts_{k-1})(z_{k-1}^Tz_{k-1}) - (s_{k-1}^Tz_{k-1})^2 }<0,
\end{equation*}
which is \eqref{Bphi*} with $B_{k-1}=\tau_kI$.
The inverse of (\ref{SBroyden_B})  is given by
\begin{equation*}\label{SBroyden_H}
H_k = \frac{1}{\tau_k}I - \frac{1}{\tau_k}\frac{z_{k-1}z_{k-1}^T}{z_{k-1}^Tz_{k-1}} + \frac{1}{\gamma_k} \frac{s_{k-1} s_{k-1}^T}{s_{k-1}^Tz_{k-1}} + \frac{1}{\tau_k}\phi_k^Hw_{k-1}w_{k-1}^T,
\end{equation*}
\begin{equation*}\label{Sw}
w_{k-1} = \sqrt{z_{k-1}^Tz_{k-1}}\left(\dfrac{s_{k-1}}{s_{k-1}^Tz_{k-1}}-\dfrac{z_{k-1}}{z_{k-1}^Tz_{k-1}}\right),
\end{equation*}
where %$\phi_k^H$ is given by
\[\phi_k^H = \frac{\phi_k^\ast(1-\phi_k)}{\phi_k^\ast - \phi_k}.\]
To obtain the uniformly positive definiteness of $B_k$, we restrict the interval of $\phi_k$ to
\begin{equation}\label{spd_phi}
\overline\phi_1 \phi_k^\ast \leq \phi_k \leq  \overline\phi_2,
\end{equation}
where  $0\leq \overline\phi_1<1$ and $\overline\phi_2>0$ are constants.
We choose $\gamma_k$ and $\tau_k$ satisfying the conditions
\begin{equation}\label{<gamma<}
\underline\gamma\leq \gamma_k \leq \overline\gamma
\quad\text{and}\quad
\underline\tau\leq \tau_k \leq \overline\tau,
\end{equation}
where  $\underline\gamma$, $\overline\gamma$, $\underline\tau$, and $\overline\tau$  are positive constants such that $0<\underline\gamma\leq \overline\gamma$ and $0<\underline\tau\leq \overline\tau$ hold.
Then the following proposition holds. 
\begin{proposition}\label{Prop_mat}
Suppose Assumption \ref{ass:Lip} is satisfied, and $B_k$ is given by $\eqref{SBroyden_B}$.
If $(\ref{sz>m})$, $\eqref{spd_phi}$ and $(\ref{<gamma<})$ hold,
then \eqref{uniformly_mat} holds.
\end{proposition}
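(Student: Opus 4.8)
The plan is to reduce the bound \eqref{uniformly_mat} to a uniform spectral estimate for a single $2\times2$ matrix. Before anything else I would record the elementary consequences of the hypotheses. From \eqref{Lip_Ass} and $0\le\nu_k\le\bar\nu$ (see \eqref{def:sz}) one gets $\|z_{k-1}\|\le(L+\bar\nu)\|s_{k-1}\|$; write $\bar c:=L+\bar\nu$. Combining this with \eqref{sz>m}, the positivity of $\underline\nu$, and the Cauchy--Schwarz inequality $(s_{k-1}^Tz_{k-1})^2\le\|s_{k-1}\|^2\|z_{k-1}\|^2$ yields
\[
\underline\nu\le P_k:=\frac{s_{k-1}^Tz_{k-1}}{\|s_{k-1}\|^2}\le\bar c,
\qquad
P_k\le Q_k:=\frac{\|z_{k-1}\|^2}{s_{k-1}^Tz_{k-1}}\le\frac{\bar c^2}{\underline\nu}.
\]
I would also note $s_{k-1}\ne0$ (otherwise the algorithm stopped at step $k-1$) and $z_{k-1}\ne0$ (otherwise \eqref{sz>m} fails), so all the quantities below are well defined.

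Next I would exploit that the three correction terms in \eqref{SBroyden_B} are rank-one matrices built from $s_{k-1}$, $z_{k-1}$, and $v_{k-1}$, and that $v_{k-1}\in\mathcal S_k:=\mathrm{span}\{s_{k-1},z_{k-1}\}$. Hence $B_k$ preserves $\mathcal S_k^\perp$ and acts there as $\tau_k I$, contributing only eigenvalues in $[\underline\tau,\overline\tau]$ by \eqref{<gamma<}; it therefore remains to bound the eigenvalues of the restriction $\tilde B_k:=B_k|_{\mathcal S_k}$. Choosing the orthonormal basis $e_1=s_{k-1}/\|s_{k-1}\|$ and $e_2\in\mathcal S_k$ (if $\dim\mathcal S_k\le1$, i.e.\ $z_{k-1}\parallel s_{k-1}$, then $v_{k-1}=0$ and the claim is immediate), a short computation gives the closed forms
\[
(\tilde B_k)_{11}=\gamma_kP_k,\qquad
\det\tilde B_k=\gamma_k\tau_k\big[(1-\phi_k)P_k+\phi_kQ_k\big],
\]
\[
\mathrm{tr}\,\tilde B_k=\tau_k+\gamma_kQ_k+\tau_k\phi_k\|v_{k-1}\|^2,
\qquad
\|v_{k-1}\|^2=\frac{Q_k}{P_k}-1\ \ (\ge0).
\]

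The decisive step — and the one I expect to be the main obstacle — is the uniform positive lower bound on $\det\tilde B_k$, since the sign of $\phi_k$ is not controlled. The point is that $\phi_k^\ast$ from \eqref{Bphi*} (with $B_{k-1}=\tau_kI$) is exactly the value of $\phi_k$ making the bracket $(1-\phi_k)P_k+\phi_kQ_k=P_k+\phi_k(Q_k-P_k)$ vanish, i.e.\ $\phi_k^\ast=-P_k/(Q_k-P_k)$ whenever $Q_k>P_k$; since $Q_k\ge P_k$, the restriction $\phi_k\ge\overline\phi_1\phi_k^\ast$ from \eqref{spd_phi} gives $P_k+\phi_k(Q_k-P_k)\ge(1-\overline\phi_1)P_k\ge(1-\overline\phi_1)\underline\nu>0$, hence $\det\tilde B_k\ge\underline\gamma\,\underline\tau\,(1-\overline\phi_1)\underline\nu=:c_1>0$. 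Together with $(\tilde B_k)_{11}>0$ this forces $\tilde B_k$ to be symmetric positive definite.

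For the complementary bound I would use $\phi_k\le\overline\phi_2$ together with the estimates on $P_k,Q_k,\|v_{k-1}\|^2$ above to obtain $\mathrm{tr}\,\tilde B_k\le M_0$ for an explicit constant $M_0$ depending only on $\underline\nu,\bar c,\overline\gamma,\overline\tau,\overline\phi_2$. Since for a $2\times2$ symmetric positive definite matrix $A$ one has $\lambda_{\min}(A)\ge\det A/\mathrm{tr}\,A$ and $\lambda_{\max}(A)\le\mathrm{tr}\,A$, this gives $\lambda_{\min}(\tilde B_k)\ge c_1/M_0$ and $\lambda_{\max}(\tilde B_k)\le M_0$; combining with the action of $B_k$ on $\mathcal S_k^\perp$, the proposition follows with $m=\min\{\underline\tau,\,c_1/M_0\}$ and $M=\max\{\overline\tau,\,M_0\}$. (Alternatively, one may observe that \eqref{SBroyden_B} is precisely the memoryless modified spectral scaling Broyden family of \cite{COAP_Nakayama2021} with the matrix $I$ replaced by $\tau_kI$, and invoke that analysis while carrying the bounded factor $\tau_k\in[\underline\tau,\overline\tau]$ through the estimates.)
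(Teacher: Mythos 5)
Your argument is correct, but it takes a different route from the paper. The paper's proof is a two-line reduction: dividing \eqref{SBroyden_B} by $\tau_k$ gives the memoryless formula of \cite{COAP_Nakayama2021} with $\gamma_k$ replaced by $\gamma_k/\tau_k$, which by \eqref{<gamma<} still lies in a fixed positive interval $[\underline\gamma/\overline\tau,\overline\gamma/\underline\tau]$, so \eqref{uniformly_mat} follows "in almost the same way as" \cite[Proposition~1]{COAP_Nakayama2021} — exactly the alternative you mention in your closing parenthesis. What you do instead is give a self-contained spectral argument: split $\mathbb{R}^n$ into $\mathrm{span}\{s_{k-1},z_{k-1}\}$ and its orthogonal complement (where $B_k$ acts as $\tau_kI$), compute the $2\times2$ restriction explicitly, verify $\det\tilde B_k=\gamma_k\tau_k\bigl[(1-\phi_k)P_k+\phi_kQ_k\bigr]$ and $\mathrm{tr}\,\tilde B_k=\tau_k+\gamma_kQ_k+\tau_k\phi_k\|v_{k-1}\|^2$, and then use $\phi_k\ge\overline\phi_1\phi_k^\ast$ with $\phi_k^\ast=-P_k/(Q_k-P_k)$ to get the uniform determinant bound, plus $\lambda_{\min}\ge\det/\mathrm{tr}$, $\lambda_{\max}\le\mathrm{tr}$ to conclude; I checked these identities and bounds and they are right, including the degenerate case $z_{k-1}\parallel s_{k-1}$ (where $v_{k-1}=0$) and the Sylvester step. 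What your route buys is independence from the cited proposition and explicit constants $m=\min\{\underline\tau,c_1/M_0\}$, $M=\max\{\overline\tau,M_0\}$; what the paper's route buys is brevity, at the cost of delegating the actual eigenvalue analysis to \cite{COAP_Nakayama2021}.
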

We note that Proposition~\ref{Prop_mat} with $\tau_k=1$ is proven in \cite[Proposition 1]{COAP_Nakayama2021}. Dividing \eqref{SBroyden_B} by $\tau_k$, we have
\[
\frac{1}{\tau_k}
B_k  = I - \frac{s_{k-1}s_{k-1}^T}{ s_{k-1}^Ts_{k-1} } +\frac{\gamma_k}{\tau_k}\frac{z_{k-1}z_{k-1}^T}{ s_{k-1}^Tz_{k-1} }
  + \phi_k v_{k-1} v_{k-1}^T.
\]
Then, since $\frac{\underline\gamma}{\overline\tau}\leq \frac{\gamma_k}{\tau_k} \leq \frac{\overline\gamma}{\underline\tau}$ holds form \eqref{<gamma<}, 
we can prove the proposition in almost the same way as \cite[Proposition 1]{COAP_Nakayama2021}.

From Theorem \ref{mainThm1} and Proposition \ref{Prop_mat}, we have the following convergence result.
\begin{theorem}
Suppose %Assumptions \ref{ass:Lip}--\ref{ass:mat} hold. 
Assumption \ref{ass:Lip} holds. 
Let the sequence $\{x_k\}$ be generated by Algorithm \ref{alg:proximal-DC-Newton} with \eqref{SBroyden_B}. 
If $(\ref{sz>m})$, $\eqref{spd_phi}$ and $(\ref{<gamma<})$ hold and the objective function $f$ is bounded below, then \eqref{dto0} holds.
Furthermore, if $\{x_k\}$ is bounded, then any accumulation point of  $\{x_k\}$ is  a critical point of $\eqref{min}$.\end{theorem}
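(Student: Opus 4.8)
The plan is to derive this theorem as an essentially immediate consequence of Theorem~\ref{mainThm1} together with Proposition~\ref{Prop_mat}; the only substantive step is to check that the hypotheses of Theorem~\ref{mainThm1} hold for the particular family \eqref{SBroyden_B}, after which both conclusions follow word for word.

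First I would note that Assumption~\ref{ass:Lip} is assumed outright, so it remains only to establish Assumption~\ref{ass:mat}. For $k\ge 1$, condition \eqref{sz>m} guarantees $s_{k-1}^Tz_{k-1}>0$, so the update \eqref{SBroyden_B} and the vector $v_{k-1}$ are well defined; the left-hand inequality in \eqref{spd_phi}, combined with $\phi_k^\ast<0$ and $0\le\overline\phi_1<1$, gives $\phi_k\ge\overline\phi_1\phi_k^\ast>\phi_k^\ast$, so that $B_k$ is symmetric positive definite. Then Proposition~\ref{Prop_mat}, whose hypotheses \eqref{sz>m}, \eqref{spd_phi}, \eqref{<gamma<} are exactly those assumed here, delivers the uniform bounds \eqref{uniformly_mat}, i.e. Assumption~\ref{ass:mat}, with constants $m,M$ depending only on $L$ and on $\underline\nu,\bar\nu,\overline\phi_1,\overline\phi_2,\underline\gamma,\overline\gamma,\underline\tau,\overline\tau$ but not on $k$ (this independence is what the rescaling $B_k/\tau_k$ remark after Proposition~\ref{Prop_mat} makes explicit). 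The initial matrix $B_0$ is not produced by \eqref{SBroyden_B}; I would simply take $B_0=\tau_0 I$ with $\underline\tau\le\tau_0\le\overline\tau$ (or any fixed symmetric positive definite matrix), so that \eqref{uniformly_mat} persists after harmlessly adjusting $m,M$.

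With Assumptions~\ref{ass:Lip} and \ref{ass:mat} in force and $f$ bounded below, I would then invoke Theorem~\ref{mainThm1} directly: its first conclusion is $\lim_{k\to\infty}\|d_k\|=0$, which is \eqref{dto0}. If in addition $\{x_k\}$ is bounded, the second conclusion of Theorem~\ref{mainThm1} gives that every accumulation point of $\{x_k\}$ is a critical point of \eqref{min}, which completes the argument.

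I do not expect a genuine obstacle here, since the theorem is a corollary by design. The only point needing a little care is the verification that Proposition~\ref{Prop_mat} is applicable and that its constants are $k$-independent; this is precisely why the statement carries along the uniform ranges \eqref{spd_phi} and \eqref{<gamma<} for $\phi_k$, $\gamma_k$, $\tau_k$, and why \eqref{sz>m} is imposed rather than merely assuming $s_{k-1}^Tz_{k-1}>0$ for each $k$.
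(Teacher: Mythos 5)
Your proposal is correct and matches the paper's own reasoning: the paper derives this theorem directly by combining Proposition~\ref{Prop_mat} (which yields the uniform bounds \eqref{uniformly_mat}, i.e.\ Assumption~\ref{ass:mat}, under \eqref{sz>m}, \eqref{spd_phi}, \eqref{<gamma<}) with Theorem~\ref{mainThm1}. Your additional remarks on the choice of $B_0$ and the $k$-independence of the constants are harmless elaborations of the same argument.
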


\subsection{Semi-smooth Newton method for computing scaled proximal mappings}\label{subsec:prox}
In this section, we consider the numerical method for solving subproblem \eqref{w_prox_grad}. 
Since the structure of the subproblem is the sum of a smooth convex function and a nonsmooth convex function, 
we can use proximal gradient methods, for example. 
However, computational costs for solving such subproblems become high (especially when the dimension is large) because the dimension of the subproblem is {the} same as the original problem \eqref{min}. 
Thus, we adopt Becker et al.'s technique~\cite{SIOPT_Becker} to solve the subproblem. 

We now introduce the following theorem, which can be proved by using~\cite[Theorem 3.4]{SIOPT_Becker}, as shown in Appendix \ref{Appendix:prox}.
\begin{theorem}\label{thm:prox}
Let $\bar{x},u_1,u_2\in\mathbb{R}^{n}$, $\tau>0$,
\begin{align}
B=\tau I+u_1u_1^T-u_2u_2^T\label{MLBFGS_B2},
\end{align}
$\alpha=(\alpha_1,\alpha_2)^T$ and
\begin{align}
\zeta(\alpha)=\bar{x} - \frac{\alpha_1}{\tau} u_1 + \alpha_2 (\tau I+u_1u_1^T)^{-1} u_2.
\label{def:zeta}
\end{align}
If $u_1$ and $u_2$ are linearly independent, \eqref{MLBFGS_B2} is positive definite, and $h_1$ is proper lsc convex, then 
\begin{align}\label{prox_Bk}
{\rm Prox}_{h_1}^{B}(\bar{x}) = {\rm Prox}_{\frac{1}{\tau}h_1}(\zeta(\alpha^\ast)),
\end{align}
where the mapping $\mathcal{L}:\mathbb{R}^2\to\mathbb{R}^2$ is defined by 
\begin{align}\label{def:L}
\mathcal{L}(\alpha) 
%{\color{red}=
%\begin{pmatrix}
%\mathcal{L}_1(\alpha)\\\mathcal{L}_2(\alpha)
%\end{pmatrix}}
:=
\begin{pmatrix}
u_1^T(\bar{x}  +  \alpha_2(\tau I+u_1u_1^T)^{-1}u_2-   {\rm Prox}_{\frac{1}{\tau}h_1}(\zeta(\alpha))) + \alpha_1\\
u_2^T(\bar{x} - {\rm Prox}_{\frac{1}{\tau}h_1}(\zeta(\alpha))) + \alpha_2
\end{pmatrix}
\end{align}
and $\alpha^\ast$ is a unique root of $\mathcal{L}(\alpha)=0$.
\end{theorem}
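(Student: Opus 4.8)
The plan is to reduce the $n$-dimensional scaled proximal computation to a $2$-dimensional root-finding problem by dualizing the rank-two part of $B$, following the strategy of Becker, Fadili, and Ochs \cite{SIOPT_Becker}. First I would write $B = \tau I + u_1 u_1^T - u_2 u_2^T$ and introduce dual variables $\alpha = (\alpha_1,\alpha_2)^T$ for the two rank-one pieces. Concretely, one introduces auxiliary scalars $t_1 = u_1^T(x - \bar x)$ and $t_2 = u_2^T(x - \bar x)$ so that the quadratic $\tfrac12\|x-\bar x\|_B^2 = \tfrac{\tau}{2}\|x-\bar x\|^2 + \tfrac12 t_1^2 - \tfrac12 t_2^2$, and the minus sign on $t_2^2$ is handled by a Fenchel-type substitution $-\tfrac12 t_2^2 = \min_{\alpha_2}\{\tfrac12\alpha_2^2 - \alpha_2 t_2\}$ (and similarly a quadratic-to-linear trick for $t_1^2$). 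After swapping the order of minimization, the inner minimization over $x$ becomes a proximal mapping of $h_1$ with respect to the \emph{diagonal} metric $\tau I$, i.e.\ ${\rm Prox}_{\frac1\tau h_1}$, evaluated at a point that depends affinely on $\alpha$; matching that affine point with \eqref{def:zeta} is a direct computation, where the term $\alpha_2(\tau I + u_1u_1^T)^{-1}u_2$ appears because the $u_1u_1^T$ block must be inverted together with $\tau I$ while the $u_2u_2^T$ block is dualized. This gives \eqref{prox_Bk} once the optimal $\alpha^\ast$ is known.

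Next I would derive the optimality system characterizing $\alpha^\ast$. Differentiating the (strongly concave in $\alpha$, after the sign flip is accounted for) dual objective and using $\nabla_x$-optimality $x = {\rm Prox}_{\frac1\tau h_1}(\zeta(\alpha))$, the stationarity conditions reduce exactly to $\mathcal{L}(\alpha)=0$ with $\mathcal{L}$ as in \eqref{def:L}: the first component encodes consistency of the $\alpha_1$ (resp.\ $u_1$) block and the second the $\alpha_2$ (resp.\ $u_2$) block. Here I would lean on \cite[Theorem 3.4]{SIOPT_Becker} essentially verbatim, checking that its hypotheses are met: $h_1$ proper lsc convex is assumed; positive definiteness of $B$ is assumed; and linear independence of $u_1,u_2$ is assumed, which is what makes the $2\times 2$ dual problem non-degenerate and guarantees the rank-two perturbation is genuinely rank two. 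The translation from their normalized setting (they typically write $B = D + VV^T$ or a signed variant) to our \eqref{MLBFGS_B2} with one positive and one negative dyad is the bookkeeping step: one matches $V$, the sign matrix, and the scaling $\tau$, and rescales $h_1$ to $\frac1\tau h_1$ to absorb the $\tau I$ metric into a standard (unweighted) proximal operator.

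For uniqueness of the root, I would invoke strict concavity/monotonicity: $\mathcal{L}$ is (up to sign) the gradient of a strictly concave function of $\alpha$ on $\mathbb{R}^2$ — strict concavity coming from the combination of the quadratic penalties in $\alpha_1,\alpha_2$ with the (maximal) monotonicity of ${\rm Prox}_{\frac1\tau h_1}$, which is firmly nonexpansive — and coercivity of that function (using linear independence of $u_1,u_2$ so that no direction in $\alpha$-space is ``flat''). Hence the dual has a unique maximizer, so $\mathcal{L}(\alpha)=0$ has a unique solution $\alpha^\ast$, and plugging it into \eqref{prox_Bk} recovers the primal optimum, which is unique because $B\succ0$ makes the primal objective strongly convex.

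The main obstacle I anticipate is not any single inequality but the careful sign/bookkeeping in the dualization: the term $-u_2u_2^T$ must be dualized (Fenchel conjugate of a \emph{concave} quadratic, giving the $+\tfrac12\alpha_2^2$ term), whereas the $+u_1u_1^T$ term is handled by completing the square / a linearization that produces $(\tau I + u_1u_1^T)^{-1}$ in $\zeta$ and in the first component of $\mathcal{L}$. Getting the asymmetry between the two dyads exactly right — so that the resulting $\mathcal{L}$ matches \eqref{def:L} coordinate for coordinate — and verifying that \cite[Theorem 3.4]{SIOPT_Becker} applies after this rescaling is the delicate part; everything else is routine convex duality. I would relegate the detailed computation to Appendix~\ref{Appendix:prox} as the paper already indicates.
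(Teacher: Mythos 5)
Your plan diverges from the paper's proof and, as sketched, has a genuine gap at exactly the point you flag as ``delicate.'' The paper does not perform a fresh primal--dual derivation: it quotes Becker et al.'s result (Theorem~\ref{thm:Beker}) for corrections $D\pm\sum_i u_iu_i^T$ of a \emph{uniform} sign, and then applies it \emph{twice in a nested fashion} — first to $B=P-u_2u_2^T$ with $D=P=\tau I+u_1u_1^T$ (a one-dimensional equation in $\alpha_2$), then to $P=\tau I+u_1u_1^T$ with $D=\tau I$ (a one-dimensional equation in $\alpha_1$), and finally uses ${\rm Prox}_{h_1}^{\tau I}={\rm Prox}_{\frac1\tau h_1}$. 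This nesting is precisely what produces the asymmetric system \eqref{def:L}: the extra term $\alpha_2(\tau I+u_1u_1^T)^{-1}u_2$ sits inside the first component and inside $\zeta$ in \eqref{def:zeta}, while the second component sees only $\bar x$. Your proposal instead dualizes \emph{both} dyads so that the inner subproblem is a prox in the diagonal metric $\tau I$; carried out consistently, that yields a different, symmetric two-dimensional system, not \eqref{def:L}, and your stated intention to invoke \cite[Theorem 3.4]{SIOPT_Becker} ``essentially verbatim'' does not go through either, because the mixed-sign update $\tau I+u_1u_1^T-u_2u_2^T$ is not of the uniform-sign form covered by a single application of that theorem. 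So the coordinate-for-coordinate match with \eqref{def:L}, which is the actual content of Theorem~\ref{thm:prox}, is left unestablished.

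The uniqueness argument also does not stand as written. You assert that $\mathcal{L}$ is (up to sign) the gradient of a strictly concave function of $\alpha$; but the map in \eqref{def:L} is not symmetric — its generalized Jacobian (see the explicit form \eqref{eq:jac} in Proposition~\ref{prop:directional}) is generically nonsymmetric — so it is not a gradient map, and the mixed-sign dualization would in any case lead to a convex--concave saddle structure rather than a pure concave maximization, which requires a different monotonicity argument. The paper's uniqueness proof is much more elementary: existence comes from Theorem~\ref{thm:Beker}, and if $\hat\alpha,\bar\alpha$ are two roots of $\mathcal{L}(\alpha)=0$, both yield the same point ${\rm Prox}_{h_1}^{B}(\bar x)={\rm Prox}_{\frac1\tau h_1}(\zeta(\hat\alpha))={\rm Prox}_{\frac1\tau h_1}(\zeta(\bar\alpha))$, whence the second component of $\mathcal{L}=0$ forces $\hat\alpha_2=\bar\alpha_2$ and then the first forces $\hat\alpha_1=\bar\alpha_1$. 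To repair your route you would either have to carry out the asymmetric dualization explicitly until it reproduces \eqref{def:L} (the step you postponed), or follow the paper's nested rank-one reduction, and in either case replace the concavity-based uniqueness claim with an argument adapted to the nonsymmetric $\mathcal{L}$.
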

The Broyden--Fletcher--Goldfarb--Shanno (BFGS) formula (namely, \eqref{SBroyden_B} with $\phi_k=0$) can be rewritten as the form \eqref{MLBFGS_B2} with 
\begin{equation}
\tau = \tau_k,\quad 
u_1 = \sqrt{\frac{\gamma_k}{s_{k-1}^Tz_{k-1}}}z_{k-1},\quad u_2=\frac{\sqrt{\tau_k}}{\|s_{k-1}\|}s_{k-1}.\label{def:u}
\end{equation}
Therefore, we can compute $x_k^+$ in \eqref{inexact_prox} by setting in \eqref{prox_Bk}
\begin{equation}
\bar{x}=x_k-H_k(\nabla g(x_k)-\xi_k)\label{def:barx}
\end{equation}
and inexactly solving the following system of equations: 
\begin{equation}
\mbox{Find $\alpha\in\mathbb{R}^2$ such that }\mathcal{L}(\alpha)=0. \label{solve:L}
\end{equation} 
We emphasize that $\mathcal{L}$ is a two-dimensional function, so the computational costs for solving the system are expected to be very cheap. 
Theorem~\ref{thm:prox} assumes that $u_1$ and $u_2$ are linearly independent.  
If $u_1$ and $u_2$ are linearly dependent, then \eqref{MLBFGS_B2} becomes a rank-one update 
and hence we can adopt \cite[Theorem 3.8]{SIOPT_Becker}. 
Moreover, at least in our numerical experiments by using the BFGS formula (namely, \eqref{def:u}), 
{the linear independence} assumption is almost always satisfied. 
Therefore, in the remainder of this section, we suppose that  $u_1$ and $u_2$ are linearly independent. 

Hereafter, we consider how to solve system \eqref{solve:L}. 
Since the function $\mathcal{L}$  in \eqref{def:L} involves a nonsmooth term ${\rm Prox}_{\frac{1}{\tau}h_1}(\zeta(\alpha)))$, the function $\mathcal{L}$ is also nonsmooth. 
However, since $h_1$ is a proper lsc convex function, ${\rm Prox}_{\frac{1}{\tau}h_1}$ is single-valued, continuous, and nonexpansive (namely, Lipschitz continuous with the modulus 1), and thus $\mathcal{L}$ is also Lipschitz continuous. 
Moreover, in many applications, ${\rm Prox}_{\frac{1}{\tau}h_1}$ is (strongly) semi-smooth, 
and then $\mathcal{L}$ is also (strongly) semi-smooth. 
%If ${\rm Prox}_{\frac{1}{\tau}h_1}$ in \eqref{def:L} is (strongly) semi-smooth, then \eqref{def:L} is also (strongly) semi-smooth. 
For example, ${\rm Prox}_{\frac{1}{\tau}h_1}$ is strongly semi-smooth when $h_1$ is the $\ell_1$-norm. 
Other practical regularizers are (strongly) semi-smooth (see, for example,~\cite{arXiv_Patrinos2014,JSC_Xiao2018}). 

In general, a semi-smooth Newton method~\cite{MOR_Qi1993} can be used to solve a system of semi-smooth equations. Under mild assumptions, the method converges (quadratically) superlinearly for (strongly) semi-smooth functions. 
Accordingly, we adopt the semi-smooth Newton method to solve system \eqref{solve:L}. 

To develop a semi-smooth Newton method for \eqref{solve:L}, we first consider a stopping criterion for the algorithm. 
Considering \eqref{prox_Bk} and \eqref{def:barx},  we can rewrite \eqref{inexact_prox} as 
\begin{equation}
x_k^+={\rm Prox}_{\frac{1}{\tau}h_1}(\zeta(\bar\alpha)), \label{xkp_alver}
\end{equation}
where $\bar\alpha\in\mathbb{R}^2$ is an approximate solution of \eqref{solve:L} such that  \eqref{inexact} and \eqref{sub_opt} hold. 
To define the residual $r_k$ in \eqref{inexact} and \eqref{sub_opt}, 
we give the following proposition, whose proof is given in Appendix~\ref{Appendix:prox2}.
\begin{proposition}\label{prop:res}
Suppose all assumptions of Theorem~{\rm\ref{thm:prox}} hold.
Let $U=[-u_1,u_2]\in\mathbb{R}^{n\times 2}$, $H=B^{-1}$ and $\bar{x}=x-H(\nabla g(x)- \xi)$.
Then the following holds for all $\alpha$:
\begin{equation*}\label{res_thm}
U\mathcal{L}(\alpha)
%- \mathcal{L}_1(\alpha)&u_1+ \mathcal{L}_2(\alpha)u_2\nonumber\\
\in \nabla g(x) - \xi +B({\rm Prox}_{\frac{1}{\tau}h_1}(\zeta(\alpha))-x)
+\partial h_1({\rm Prox}_{\frac{1}{\tau}h_1}(\zeta(\alpha))). 
\end{equation*}
\end{proposition}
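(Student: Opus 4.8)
The plan is to verify the claimed subdifferential inclusion directly by unwinding the definitions of $\mathcal{L}$, $\zeta$, $U$, $B$, and $H$, and then invoking the optimality characterization of the (unscaled) proximal mapping ${\rm Prox}_{\frac{1}{\tau}h_1}$. Write $p(\alpha):={\rm Prox}_{\frac{1}{\tau}h_1}(\zeta(\alpha))$ for brevity. The first step is to record the optimality condition defining $p(\alpha)$: by definition of the proximal mapping of the convex function $\frac{1}{\tau}h_1$, we have $\tau(\zeta(\alpha)-p(\alpha))\in\partial h_1(p(\alpha))$. This is the only place convexity and properness/lsc of $h_1$ enter (they guarantee $p(\alpha)$ is single-valued and the subdifferential characterization holds). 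Everything else is linear algebra.

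Next I would compute $\tau(\zeta(\alpha)-p(\alpha))$ explicitly and show it equals $\nabla g(x)-\xi + B(p(\alpha)-x) - U\mathcal{L}(\alpha)$; combined with the inclusion from the previous step this gives exactly the assertion. Using $\zeta(\alpha)=\bar{x}-\frac{\alpha_1}{\tau}u_1+\alpha_2(\tau I+u_1u_1^T)^{-1}u_2$ and $\bar{x}=x-H(\nabla g(x)-\xi)$, one gets $\tau\zeta(\alpha)=\tau x-\tau H(\nabla g(x)-\xi)-\alpha_1 u_1+\tau\alpha_2(\tau I+u_1u_1^T)^{-1}u_2$. The key algebraic facts to exploit are: $B=\tau I+u_1u_1^T-u_2u_2^T = (\tau I+u_1u_1^T) - u_2u_2^T$, so $B = \tau I + UU^T$ is false in general — rather one checks $U U^T = u_1u_1^T + u_2u_2^T$, which is not quite $B$ either, so the bookkeeping must be done carefully term by term. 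I would instead aim to show that $\tau(\zeta(\alpha)-p(\alpha)) + U\mathcal{L}(\alpha) - B(p(\alpha)-x) = \nabla g(x)-\xi$ by substituting the two components of $\mathcal{L}(\alpha)$ from \eqref{def:L}, namely $(\mathcal{L}(\alpha))_1 = u_1^T(\bar{x}+\alpha_2(\tau I+u_1u_1^T)^{-1}u_2 - p(\alpha)) + \alpha_1$ and $(\mathcal{L}(\alpha))_2 = u_2^T(\bar{x}-p(\alpha))+\alpha_2$, so that $U\mathcal{L}(\alpha) = -u_1(\mathcal{L}(\alpha))_1 + u_2(\mathcal{L}(\alpha))_2$. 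The terms $-u_1\alpha_1$ and $u_2\alpha_2$ will cancel against the $-\alpha_1 u_1$ in $\tau\zeta(\alpha)$ and against the $\alpha_2(\tau I+u_1u_1^T)^{-1}u_2$ contributions once multiplied through by $B$; the scalar products $u_1^T p(\alpha)$ and $u_2^T p(\alpha)$ reassemble into $(u_1u_1^T + \dots)p(\alpha)$ matching the $B(p(\alpha)-x)$ term.

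I expect the main obstacle to be keeping the identity $B(\tau I+u_1u_1^T)^{-1}u_2 = u_2 + (\text{correction in }u_1)$ straight — more precisely, one needs to simplify $B(\tau I+u_1u_1^T)^{-1}u_2$ and $(\tau I+u_1u_1^T)^{-1}$ applied to various vectors, using the Sherman–Morrison formula $(\tau I+u_1u_1^T)^{-1} = \frac{1}{\tau}I - \frac{1}{\tau}\frac{u_1u_1^T}{\tau+\|u_1\|^2}$. The cleanest route is probably to note that $B(\tau I+u_1u_1^T)^{-1} = I - u_2u_2^T(\tau I+u_1u_1^T)^{-1}$, which collapses the $\alpha_2$-dependent terms nicely, and similarly $H^{-1} = B$ lets us rewrite $\tau H(\nabla g(x)-\xi)$ without inverting anything. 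Once these two reductions are in hand the remaining computation is a routine matching of coefficients of $u_1$, $u_2$, $p(\alpha)$, and $x$, after which the inclusion follows by adding the subdifferential relation $\tau(\zeta(\alpha)-p(\alpha))\in\partial h_1(p(\alpha))$. I would relegate the full term-by-term verification to the appendix as indicated, presenting here only the reduction strategy and the two Sherman–Morrison identities above.
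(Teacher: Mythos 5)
Your strategy is exactly the one the paper uses in its own proof: invoke the prox optimality characterization $\tau(\zeta(\alpha)-p(\alpha))\in\partial h_1(p(\alpha))$ with $p(\alpha)={\rm Prox}_{\frac{1}{\tau}h_1}(\zeta(\alpha))$, reduce everything else to a linear-algebra identity between $U\mathcal{L}(\alpha)$, $B$, $\zeta(\alpha)$, $p(\alpha)$, and use $BH=I$ to convert $B\bar{x}$ into $Bx-(\nabla g(x)-\xi)$; the key simplification $u_1u_1^T(\tau I+u_1u_1^T)^{-1}=I-\tau(\tau I+u_1u_1^T)^{-1}$ (equivalently your $B(\tau I+u_1u_1^T)^{-1}=I-u_2u_2^T(\tau I+u_1u_1^T)^{-1}$, or Sherman--Morrison) is also what the paper uses. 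So the approach is sound and the outline would go through.

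However, the target identity you state is wrong in sign, and in both places you state it consistently so: you aim to show $\tau(\zeta(\alpha)-p(\alpha))=\nabla g(x)-\xi+B(p(\alpha)-x)-U\mathcal{L}(\alpha)$, i.e.\ $\tau(\zeta(\alpha)-p(\alpha))+U\mathcal{L}(\alpha)-B(p(\alpha)-x)=\nabla g(x)-\xi$. The correct identity (which the coefficient matching actually produces) is
\begin{equation*}
U\mathcal{L}(\alpha)=B\bigl(p(\alpha)-\bar{x}\bigr)+\tau\bigl(\zeta(\alpha)-p(\alpha)\bigr)=\nabla g(x)-\xi+B\bigl(p(\alpha)-x\bigr)+\tau\bigl(\zeta(\alpha)-p(\alpha)\bigr),
\end{equation*}
that is, $\tau(\zeta(\alpha)-p(\alpha))=U\mathcal{L}(\alpha)-(\nabla g(x)-\xi)-B(p(\alpha)-x)$. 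The sign matters: the prox characterization gives $\tau(\zeta(\alpha)-p(\alpha))\in\partial h_1(p(\alpha))$, not its negative, so with the identity as you wrote it you would only conclude $U\mathcal{L}(\alpha)\in\nabla g(x)-\xi+B(p(\alpha)-x)-\partial h_1(p(\alpha))$, which is not the assertion of the proposition (and $\partial h_1(p(\alpha))$ is not symmetric in general). The slip is easily repaired, since an honest term-by-term computation along your own outline yields the correct sign automatically, but as written the final inclusion does not follow from the stated identity, so you should correct the target before filling in the details.
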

It follows from Proposition~\ref{prop:res}, \eqref{sub_opt}, and \eqref{xkp_alver} that we can regard $U\mathcal{L}(\alpha)$ as the residual.  

To guarantee the global convergence, we define the following merit function:
\[
\Psi(\alpha)=\frac12 \|\mathcal{L}(\alpha)\|^2, 
\]
and adopt a standard line search technique.
Summarizing the above arguments, we give Algorithm~\ref{alg:semismoothNewton}.
%%% Algorithm %%%%%%%
\begin{algorithm}
\caption{semi-smooth Newton method with line search}
\label{alg:semismoothNewton}
\begin{algorithmic}
\Require{$\alpha_0\in \mathbb{R}^2$, $x_k,u_1,u_2\in\mathbb{R}^n$, $\tau>0$, $\sigma\in(0,1/2),\ \rho\in(0,1)$, $\theta_k\in[\bar{\theta},1]$, 
$\varepsilon>0$
}
\State $B_k \leftarrow \tau I + u_1u_1^T - u_2u_2^T$
\State $H_k \leftarrow B_k^{-1}$
\State $\bar{x} \leftarrow x_k -H_k\nabla g(x_k)$
\State $U \leftarrow [-u_1, u_2]$
%\REPEAT
\For{$j = 0, 1, 2, ...$}
\State $x_k^+ \leftarrow {\rm Prox}_{\frac{1}{\tau}h_1}\left(\zeta(\alpha_j)\right)$
\State $r_k \leftarrow U\mathcal{L}(\alpha_j)$
\State $d_k \leftarrow x_k^+ - x_k$
\If{
either condition \eqref{inexact} or $\|d_k\|\le \varepsilon$ is satisfied
} \State stop. \EndIf
\State Select %the Clarke Jacobian 
$V_j\in \partial^C \mathcal{L}(\alpha_j)$.
\State $p_j \leftarrow - V_j^{-T}\mathcal{L}(\alpha_j)$
\State $l \leftarrow 0$
\While{condition \begin{align}
\Psi(\alpha_j+\rho^{l}p_j)\le (1-2\sigma \rho^{l})\Psi(\alpha_j) \label{inner_armijo}
\end{align} 
\State is not satisfied}
\State $l \leftarrow l+1$
\EndWhile
\State $t_j \leftarrow \rho^l$
\State $\alpha_{j+1} \leftarrow \alpha_j + t_j p_j$
\EndFor
%\UNTIL{condition \eqref{inexact} is satisfied}
\end{algorithmic}
\end{algorithm}

\begin{remark}
Note that $\varepsilon$ in Algorithm~\ref{alg:semismoothNewton} is 
the constant appearing in Algorithm~\ref{alg:proximal-DC-Newton}. 
Thus, if Algorithm~\ref{alg:semismoothNewton} is stopped by $\|d_k\|\le \varepsilon$, then Algorithm~\ref{alg:proximal-DC-Newton} is also stopped. 
Otherwise, we have $\|d_k\|> \varepsilon$ holds for all $j$. 
It follows from Proposition~\ref{prop:res} and \eqref{inexact} that the stopping condition of Algorithm~\ref{alg:semismoothNewton} can be rewritten by 
\begin{align}
\|U\mathcal{L}(\alpha_j)\|_{H_k}\le (1-\theta_k)\|{\rm Prox}_{\frac{1}{\tau}h_1}\left(\zeta(\alpha_j)\right)-x_k\|_{B_k}
=(1-\theta_k)\|d_k\|_{B_k}
. \label{inner:stop}
\end{align}
Thus, it suffices to show $\lim_{j\to \infty}\alpha_j=\alpha^\ast$ ($\alpha^\ast$ is the unique solution of $\mathcal{L}(\alpha)=0$), instead of \eqref{inner:stop}. 
%We now consider the case where $\lim_{j\to \infty}\alpha_j=\alpha^\ast$ ($\alpha^\ast$ is the unique solution of $\mathcal{L}(\alpha)=0$). 
%If \eqref{inner:stop} holds for some $j$, Algorithm~\ref{alg:semismoothNewton} naturally terminates within finite iterations. 
%Else if \eqref{inner:stop} does not hold for all $j$, we have 
%\[
%\lim_{j\to\infty} {\rm Prox}_{\frac{1}{\tau}h_1}\left(\zeta(\alpha_j)\right)=x_k. 
%\]
%Thus, it follows from Proposition~\ref{prop:res} that 
%\[
%0
%\in \nabla g(x_k) - \xi_k +\partial h_1(x_k), 
%\]
%which implies from \eqref{crit} that a critical point $x^\ast=x_k$ has already been obtained. 
%Therefore, in the rest of this section, we consider $\lim_{j\to \infty}\alpha_j=\alpha^\ast$ as a convergence condition, instead of \eqref{inner:stop}. 
\end{remark}
Next, we consider the global convergence properties for Algorithm~\ref{alg:semismoothNewton}. 
There are many studies on global convergence properties for semi-smooth Newton methods with line search 
under the assumption that the merit function $\Psi$ is continuously differentiable (see \cite{facchinei2003finite,QiSu93,QSZ00,QS1998} for example).  
However, to the best of our knowledge, there are not many studies on a global convergence property for the nondifferentiable case.  
Thus, we provide the proofs for the global convergence of the algorithm in Appendix \ref{Appendix:global_semismooth}.
%%%%%%%%%%%%%%%%%%%%%%%%%%%
\begin{theorem}\label{thm_global_semismooth}
Consider Algorithm~{\rm\ref{alg:semismoothNewton}}. 
Suppose that all assumptions of Theorem~{\rm \ref{thm:prox}} hold 
and ${\rm Prox}_{\frac{1}{\tau}h_1}$ is directionally differentiable. 
%and ${\rm Prox}_{\frac{1}{\tau}h_1}$ is semi-smooth. 
In addition, assume that the level set  at the initial point: 
\[
\mathcal{S}_0=\{
\alpha \mid \Psi(\alpha) \le \Psi(\alpha_0)
\}
\]
is bounded and any element of $\partial^C \mathcal{L}(\alpha)$ is nonsingular for any $\alpha\in \mathcal{S}_0$. 
%Then, the sequence $\{\alpha_j\}$ generated by Algorithm~{\rm\ref{alg:semismoothNewton}} either terminates at the unique solution $\alpha^\ast$ of \eqref{solve:L} or converges to $\alpha^\ast$.
If the condition
\begin{equation}
\label{eq:descent_direction}
\Psi^\prime (\alpha_j;p_j)\le (V_j\mathcal{L}(\alpha_j))^Tp_j 
\end{equation}
holds for all $j$, then  the sequence $\{\alpha_j\}$ generated by Algorithm~{\rm\ref{alg:semismoothNewton}} either terminates at the unique solution $\alpha^\ast$ of \eqref{solve:L} or converges to $\alpha^\ast$.
\end{theorem}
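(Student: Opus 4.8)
The plan is to follow the classical globalization template for line-search Newton-type methods, adapted to the semismooth setting. First I would check well-posedness: by induction the iterates stay in $\mathcal{S}_0$, since $\alpha_0\in\mathcal{S}_0$ and \eqref{inner_armijo} forces $\Psi(\alpha_{j+1})<\Psi(\alpha_j)$ whenever $\mathcal{L}(\alpha_j)\ne 0$ (if $\mathcal{L}(\alpha_j)=0$ the algorithm stops at $\alpha_j$, which equals $\alpha^\ast$ by the uniqueness part of Theorem~\ref{thm:prox}). Hence every $V_j\in\partial^C\mathcal{L}(\alpha_j)$ is nonsingular by hypothesis, so $p_j=-V_j^{-T}\mathcal{L}(\alpha_j)$ is well defined; since $V_j^Tp_j=-\mathcal{L}(\alpha_j)$ we get
\[
(V_j\mathcal{L}(\alpha_j))^Tp_j=\mathcal{L}(\alpha_j)^TV_j^Tp_j=-\|\mathcal{L}(\alpha_j)\|^2=-2\Psi(\alpha_j),
\]
so \eqref{eq:descent_direction} gives $\Psi'(\alpha_j;p_j)\le-2\Psi(\alpha_j)<0$: $p_j$ is a descent direction for $\Psi$. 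Because ${\rm Prox}_{\frac1\tau h_1}$ is directionally differentiable and $1$-Lipschitz, $\mathcal{L}$ (and hence $\Psi$) is Lipschitz and directionally differentiable, thus Hadamard directionally differentiable, so $\Psi(\alpha_j+tp_j)=\Psi(\alpha_j)+t\Psi'(\alpha_j;p_j)+o(t)$; combined with $\Psi'(\alpha_j;p_j)\le-2\Psi(\alpha_j)$ and $\sigma<1/2$, the Armijo test \eqref{inner_armijo} holds for all small $t>0$, so the inner loop stops finitely and $t_j=\rho^{l_j}>0$ is well defined.

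Next I would set up the convergence argument. By \eqref{inner_armijo} the sequence $\{\Psi(\alpha_j)\}$ is nonincreasing and bounded below by $0$, hence convergent; since $\alpha_j\in\mathcal{S}_0$ and $\mathcal{S}_0$ is bounded, $\{\alpha_j\}$ is bounded; and telescoping $\Psi(\alpha_j)-\Psi(\alpha_{j+1})\ge 2\sigma t_j\Psi(\alpha_j)$ gives $\sum_j t_j\Psi(\alpha_j)<\infty$. Suppose, for contradiction, that $\Psi(\alpha_j)\not\to0$; by monotonicity $\Psi(\alpha_j)\to\bar\Psi>0$, so $\sum_j t_j<\infty$ and $t_j\to0$. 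Passing to a subsequence, $\alpha_{j_i}\to\bar\alpha\in\mathcal{S}_0$ ($\mathcal{S}_0$ is closed because $\Psi$ is continuous); using the local boundedness of the Clarke generalized Jacobian and the closedness of its graph, a further subsequence gives $V_{j_i}\to\bar V\in\partial^C\mathcal{L}(\bar\alpha)$, which is nonsingular by hypothesis, whence $p_{j_i}\to\bar p=-\bar V^{-T}\mathcal{L}(\bar\alpha)\ne0$.

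Since $t_{j_i}\to0$, for $i$ large the previous trial step $\tilde t_{j_i}=t_{j_i}/\rho$ violated \eqref{inner_armijo}, i.e.
\[
\frac{\Psi(\alpha_{j_i}+\tilde t_{j_i}p_{j_i})-\Psi(\alpha_{j_i})}{\tilde t_{j_i}}>-2\sigma\,\Psi(\alpha_{j_i}).
\]
Letting $i\to\infty$, using Hadamard directional differentiability of $\Psi$ to take the limit of the left-hand quotient, one obtains $\Psi'(\bar\alpha;\bar p)\ge-2\sigma\bar\Psi$; on the other hand the computation of the first paragraph carried out at $\bar\alpha$ with $\bar V,\bar p$, together with the limiting form of \eqref{eq:descent_direction}, gives $\Psi'(\bar\alpha;\bar p)\le-2\bar\Psi$, and since $\sigma<1/2$ this is the desired contradiction. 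Hence $\Psi(\alpha_j)\to0$, i.e.\ $\mathcal{L}(\alpha_j)\to0$. To finish: if the algorithm terminates at some $\alpha_j$, then $\mathcal{L}(\alpha_j)=0$ and $\alpha_j=\alpha^\ast$; otherwise $\{\alpha_j\}$ is bounded with $\mathcal{L}(\alpha_j)\to0$, so every accumulation point solves $\mathcal{L}(\alpha)=0$, and by the uniqueness in Theorem~\ref{thm:prox} there is exactly one such point, $\alpha^\ast$; a bounded sequence with a single accumulation point converges to it, so $\alpha_j\to\alpha^\ast$.

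The main obstacle is the limiting argument in the contradiction step. Unlike the $C^1$ case, $\Psi$ is only Hadamard directionally differentiable and semismooth, so one must (i) invoke the closed-graph and local-boundedness properties of $\partial^C\mathcal{L}$ to extract $\bar V\in\partial^C\mathcal{L}(\bar\alpha)$ with $p_{j_i}\to-\bar V^{-T}\mathcal{L}(\bar\alpha)$, (ii) take the limit of $[\Psi(\alpha_{j_i}+\tilde t_{j_i}p_{j_i})-\Psi(\alpha_{j_i})]/\tilde t_{j_i}$ even though the base point $\alpha_{j_i}$ and the direction $p_{j_i}$ both move (here the local Lipschitz property plus directional differentiability, i.e.\ Hadamard differentiability, is what is needed), and (iii) show that the estimate \eqref{eq:descent_direction} survives at $\bar\alpha$, e.g.\ via Lebourg's mean-value theorem and the chain rule for the Clarke subdifferential of $\Psi=\frac12\|\mathcal{L}\|^2$. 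The strict inequality $\sigma<\tfrac12$ is precisely what turns the two limiting estimates $\Psi'(\bar\alpha;\bar p)\ge-2\sigma\bar\Psi$ and $\Psi'(\bar\alpha;\bar p)\le-2\bar\Psi$ into a contradiction; everything else is routine.
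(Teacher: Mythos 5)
Your framework (descent property of $p_j$, finite termination of the backtracking loop, monotonicity of $\{\Psi(\alpha_j)\}$, $\{\alpha_j\}\subset\mathcal{S}_0$, and the final step ``all accumulation points are zeros of $\mathcal{L}$ $+$ uniqueness from Theorem~\ref{thm:prox} $\Rightarrow$ convergence'') is sound and agrees with the paper. The genuine gap is in your contradiction step, which transplants the classical $C^1$ template: from $t_{j_i}\to 0$ you take the last rejected trial steps, obtain $\bigl[\Psi(\alpha_{j_i}+\tilde t_{j_i}p_{j_i})-\Psi(\alpha_{j_i})\bigr]/\tilde t_{j_i}>-2\sigma\Psi(\alpha_{j_i})$, and then ``pass to the limit'' to get $\Psi'(\bar\alpha;\bar p)\ge -2\sigma\bar\Psi$, while a ``limiting form'' of \eqref{eq:descent_direction} is claimed to give $\Psi'(\bar\alpha;\bar p)\le -2\bar\Psi$. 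Neither limit passage is available under the stated hypotheses. Hadamard directional differentiability controls difference quotients when the \emph{direction} moves at a \emph{fixed} base point; here the base point $\alpha_{j_i}$ moves as well, and for merely Lipschitz, directionally differentiable (even semismooth) functions the map $(\alpha,p)\mapsto\Psi'(\alpha;p)$ is neither upper nor lower semicontinuous in the base point (a one-dimensional kink already defeats it). The repair you sketch—Lebourg's mean value theorem plus upper semicontinuity of $\partial^C\Psi=\{V\mathcal{L}(\cdot):V\in\partial^C\mathcal{L}(\cdot)\}$—only yields $(W\mathcal{L}(\bar\alpha))^T\bar p\ge-2\sigma\bar\Psi$ for \emph{some} $W\in\partial^C\mathcal{L}(\bar\alpha)$, i.e.\ a bound on the Clarke directional derivative $\Psi^{\circ}(\bar\alpha;\bar p)$; since $W$ need not coincide with the limit $\bar V$ of the $V_{j_i}$ and $\Psi'(\bar\alpha;\bar p)\le\Psi^{\circ}(\bar\alpha;\bar p)$, this does not clash with $\Psi'(\bar\alpha;\bar p)\le-2\bar\Psi$. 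Moreover, \eqref{eq:descent_direction} is assumed only along the iterates $\alpha_j$, so you are not entitled to its ``limiting form'' at $\bar\alpha$ in the first place. As written, the contradiction does not close.

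The paper avoids this trap by never letting $t_j\to0$ enter the argument: Lemma~\ref{lem_inner_linesearch} shows that around any accumulation point $\widehat\alpha$ with $\mathcal{L}(\widehat\alpha)\ne0$ and nonsingular $\partial^C\mathcal{L}(\widehat\alpha)$ there are a compact neighborhood and a single $\bar t>0$ such that the sufficient-decrease inequality $\Psi(\alpha+tp)\le(1-2\sigma t)\Psi(\alpha)$ holds for every $\alpha$ in that neighborhood and every $t\in(0,\bar t]$ (using boundedness of $p=-V^{-T}\mathcal{L}(\alpha)$ there and a uniform positive lower bound on $\Psi$). Hence the accepted step sizes along the subsequence satisfy $t_{j_i}\ge\rho^{\hat l}$ for a fixed $\hat l$ with $\rho^{\hat l}\le\bar t$, and monotonicity gives $\Psi(\alpha_{j_{i+1}})\le(1-2\sigma\rho^{\hat l})\Psi(\alpha_{j_i})$, which forces $\Psi(\alpha_{j_i})\to0$ and contradicts $\Psi(\widehat\alpha)>0$—no limit of (generalized) directional derivatives is required. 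To salvage your route you would have to prove exactly such a uniform lower bound on acceptable step sizes near a non-stationary accumulation point (or impose continuity assumptions on $\Psi'$ that the theorem does not make); that uniform-step lemma is precisely the ingredient your proposal is missing.
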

As mentioned above, in many applications, ${\rm Prox}_{\frac{1}{\tau}h_1}$ is semi-smooth. 
Because a semi-smooth function is directionally differentiable, 
the assumption of the directional differentiability of ${\rm Prox}_{\frac{1}{\tau}h_1}$ is reasonable. 

%In some special cases (for example, $h_1$ is an affine function or a strictly convex quadratic function), 
%${\rm Prox}_{\frac{1}{\tau}h_1}$ (and $\mathcal{L}$) is differentiable, and hence \eqref{eq:descent_direction} holds with $V_j=\nabla \mathcal{L}(\alpha_j)$. 
%We consider the general cases. 
Since the function $\|\cdot\|^2$ is continuously differentiable and $\mathcal{L}$ is Lipschitz continuous, 
it follows from \cite[Proposition 7.1.11]{facchinei2003finite} that 
\begin{align*}
\partial^C\Psi(\alpha)= \{V\mathcal{L}(\alpha)\mid V\in\partial^C \mathcal{L}(\alpha)\}. 
%\label{chain_rule}
\end{align*}
Therefore, any element of $\partial^C\Psi(\alpha)$ can be expressed by the form $V\mathcal{L}(\alpha)$ for some $V\in\partial^C \mathcal{L}(\alpha)$, and conversely 
$V\mathcal{L}(\alpha)\in\partial^C\Psi(\alpha)$ holds for any $V\in\partial^C \mathcal{L}(\alpha)$. 
Thus, it follows from \cite[Proposition 7.1.17]{facchinei2003finite} that there exists $V_j\in \partial^C \mathcal{L}(\alpha_j)$ such that $\Psi^\prime(\alpha_j;p_j)=(V_j\mathcal{L}(\alpha_j))^Tp_j$, which yields \eqref{eq:descent_direction}. 
Though it is not obvious how to choose $V_j$ satisfying \eqref{eq:descent_direction} in practice, 
in our numerical experiments reported in Section~\ref{sec:numerical}, there was no case where  condition \eqref{eq:descent_direction}  was violated. 
%Though \eqref{eq:descent_direction} can be satisfied theoretically, it is not obvious how to choose such $V_j$ in practice. 
%If $h_1$ is differentiable, then it follows from $\Psi^\prime(\alpha_j;p_j)=(V_j\mathcal{L}(\alpha_j))^Tp_j$ that  \eqref{eq:descent_direction} holds (see, for example, \cite{}**add reference**). 
%****
%%%%%%%%%%%%%%%%
%%%%%%%%%%%%%%%%
%%%%%%%%%%%%%%%%
%%%%%%%%%%%%%%%
%%%%%%%%%%%%%%%
%%%%%%%%%%%%%%%

We now introduce local convergence properties of Algorithm~\ref{alg:semismoothNewton}. 
Although the proof is almost the same as \cite{QiSu93,QSZ00}, we provide the proof in Appendix \ref{Appendix:local_semismooth} for the readability.
%%%%%%%%%%%%%%%%%%%%
\begin{theorem}\label{thm:conv}
Assume that all assumptions of Theorem~{\rm\ref{thm_global_semismooth}} hold 
and ${\rm Prox}_{\frac{1}{\tau}h_1}$ is semi-smooth. 
If Algorithm~{\rm\ref{alg:semismoothNewton} generates an infinite sequence $\{\alpha_j\}$, }
then $\{\alpha_j\}$ converges to the unique solution $\alpha^\ast$ of \eqref{solve:L} Q-superlinearly. 
Moreover, if ${\rm Prox}_{\frac{1}{\tau}h_1}$ is strongly semi-smooth, then  $\{\alpha_j\}$ converges to the solution $\alpha^\ast$ Q-quadratically. 
\end{theorem}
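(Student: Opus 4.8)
The plan is to show that, once $\alpha_j$ enters a sufficiently small neighbourhood of the unique root $\alpha^\ast$ of \eqref{solve:L}, the inner backtracking loop of Algorithm~\ref{alg:semismoothNewton} always accepts the full step $t_j=1$, so that from then on the iteration coincides with the undamped semi-smooth Newton iteration, for which the Q-superlinear — and, under strong semi-smoothness, Q-quadratic — rate is the classical estimate of Qi and Sun \cite{MOR_Qi1993,QiSu93}. We may assume $\alpha_j\neq\alpha^\ast$ for all $j$, since otherwise $\mathcal{L}(\alpha_j)=0$, $p_j=0$, the sequence is eventually constant, and the rate statements hold trivially.

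First I would invoke the global convergence already established: by Theorem~\ref{thm_global_semismooth} the infinite sequence $\{\alpha_j\}$ converges to $\alpha^\ast$. Since $\Psi(\alpha^\ast)=\tfrac12\|\mathcal{L}(\alpha^\ast)\|^2=0\le\Psi(\alpha_0)$, we have $\alpha^\ast\in\mathcal{S}_0$, so by hypothesis every matrix in $\partial^C\mathcal{L}(\alpha^\ast)$ is nonsingular. Because $\partial^C\mathcal{L}$ is compact-valued and upper semicontinuous, a standard perturbation argument (see \cite{QiSu93} and \cite[Ch.~7]{facchinei2003finite}) yields an index $j_0$ and constants $0<\kappa_1\le\kappa_2$ such that, for all $j\ge j_0$ and all $V_j\in\partial^C\mathcal{L}(\alpha_j)$, the matrix $V_j$ is invertible with $\|V_j\|\le\kappa_2$ and $\|V_j^{-1}\|=\|V_j^{-T}\|\le\kappa_1^{-1}$. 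In particular $p_j=-V_j^{-T}\mathcal{L}(\alpha_j)$ is well defined and $\|p_j\|\le\kappa_1^{-1}\|\mathcal{L}(\alpha_j)\|$ for $j\ge j_0$.

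Next I would bring in semi-smoothness of $\mathcal{L}$, which it inherits from that of ${\rm Prox}_{\frac{1}{\tau}h_1}$ through the composition with the affine map $\zeta$. Setting $e_j:=\alpha_j-\alpha^\ast$ and using $\mathcal{L}(\alpha^\ast)=0$, the classical semi-smooth Newton estimate applied to the direction $p_j$ gives
\[
\|\alpha_j+p_j-\alpha^\ast\|=o(\|e_j\|)\qquad\bigl(\text{resp. }O(\|e_j\|^2)\text{ when }{\rm Prox}_{\frac{1}{\tau}h_1}\text{ is strongly semi-smooth}\bigr).
\]
The same estimate together with $\|V_j^{-T}\|\le\kappa_1^{-1}$ yields $\|e_j\|\le 2\kappa_1^{-1}\|\mathcal{L}(\alpha_j)\|$ for large $j$, while Lipschitz continuity of $\mathcal{L}$, say with constant $L_{\mathcal{L}}$, gives $\|\mathcal{L}(\alpha_j)\|\le L_{\mathcal{L}}\|e_j\|$; hence $\|\mathcal{L}(\alpha_j)\|$ and $\|e_j\|$ are of the same order near $\alpha^\ast$. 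Combining these with $\mathcal{L}(\alpha^\ast)=0$,
\[
\Psi(\alpha_j+p_j)=\tfrac12\|\mathcal{L}(\alpha_j+p_j)\|^2\le\tfrac12 L_{\mathcal{L}}^2\,\|\alpha_j+p_j-\alpha^\ast\|^2=o(\|e_j\|^2)=o(\Psi(\alpha_j)).
\]
Since $\sigma\in(0,1/2)$, for all sufficiently large $j$ this forces $\Psi(\alpha_j+p_j)\le(1-2\sigma)\Psi(\alpha_j)=(1-2\sigma\rho^{0})\Psi(\alpha_j)$, i.e.\ condition \eqref{inner_armijo} holds with $l=0$ and the line search accepts $t_j=1$. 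Consequently $\alpha_{j+1}=\alpha_j+p_j$ for all large $j$, and the first displayed estimate gives $\|\alpha_{j+1}-\alpha^\ast\|=o(\|\alpha_j-\alpha^\ast\|)$, resp.\ $O(\|\alpha_j-\alpha^\ast\|^2)$ — exactly the asserted Q-superlinear, resp.\ Q-quadratic, convergence.

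I expect the delicate point to be the transition argument: proving that the Armijo test \eqref{inner_armijo} eventually stops backtracking and accepts the unit step. Since here $\Psi$ is merely locally Lipschitz rather than $C^1$, this cannot be read off from a descent-lemma comparison as in the smooth-globalization literature; one has to route it through the two-sided comparison $\|\mathcal{L}(\alpha_j)\|\asymp\|\alpha_j-\alpha^\ast\|$ valid near $\alpha^\ast$, whose lower bound rests on the uniform nonsingularity of $\partial^C\mathcal{L}$ near $\alpha^\ast$ and the semi-smooth estimate, and whose upper bound rests on Lipschitz continuity of $\mathcal{L}$. Once $\alpha_j\to\alpha^\ast$ is supplied by Theorem~\ref{thm_global_semismooth}, the remaining ingredients — uniform boundedness of $\|V_j^{-T}\|$ and the $o(\cdot)$/$O(\cdot)$ semi-smooth Newton estimates — are routine, so the argument closely parallels \cite{QiSu93,QSZ00}.
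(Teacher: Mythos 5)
Your proposal is correct and follows essentially the same route as the paper's proof in Appendix F: global convergence from Theorem~\ref{thm_global_semismooth}, uniform nonsingularity and boundedness of $V_j^{-T}$ near $\alpha^\ast$, the semi-smooth Newton estimate $\|\alpha_j+p_j-\alpha^\ast\|=o(\|\alpha_j-\alpha^\ast\|)$ (resp.\ $O(\|\alpha_j-\alpha^\ast\|^2)$), the two-sided comparison $\|\mathcal{L}(\alpha_j)\|\asymp\|\alpha_j-\alpha^\ast\|$ to conclude $\Psi(\alpha_j+p_j)=o(\Psi(\alpha_j))$ and hence acceptance of the unit step in \eqref{inner_armijo}, and then the stated rates. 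The only cosmetic difference is that you obtain the lower bound $\|\alpha_j-\alpha^\ast\|=O(\|\mathcal{L}(\alpha_j)\|)$ from the Newton estimate and $\|V_j^{-T}\|\le\kappa_1^{-1}$, whereas the paper cites the local error bound of \cite[Theorem 3.1]{MOR_Qi1993}; both are valid.
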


In Theorem~\ref{thm_global_semismooth}, we assume the boundedness of the level set at the initial point. 
We now consider a sufficient condition to guarantee this assumption for any initial point $\alpha_0$. 
For this purpose, we restrict the approximate matrix to the BFGS formula, namely \eqref{def:u}. 
The proof of the following proposition is given in Appendix~\ref{sec:A6}. 
%%%%%%%%%
\begin{proposition}\label{prop2pprime}
Let $\tau$, $u_1$ and $u_2$ be given in \eqref{def:u}. 
Suppose that Assumption~\ref{ass:Lip} and conditions \eqref{sz>m} and \eqref{<gamma<} hold.  
Moreover, assume that 
$u_1$ and $u_2$ are linearly independent and 
there exists a positive constant $\bar{c}$ such that 
\begin{align}\label{ass:bound_cdiff}
\|v\|\le\bar c\qquad \forall v\in\partial h_1(x)
\end{align}
for any $x \in {\rm dom}\, h_1=\{x \mid h_1(x)<\infty\}$. 
Then, the function $\Psi$ is coercive, namely, the following holds: 
\begin{align*}
\lim_{\|\alpha\|\to\infty}\Psi (\alpha)=\infty. 
%\label{L_bounded}
\end{align*}
\end{proposition}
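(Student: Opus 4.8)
The plan is to show that $\|\mathcal{L}(\alpha)\|\to\infty$ as $\|\alpha\|\to\infty$, since then $\Psi(\alpha)=\frac12\|\mathcal{L}(\alpha)\|^2\to\infty$. The pivotal observation is that the only nonlinear and nonsmooth ingredient of $\mathcal{L}$, namely $P(\alpha):={\rm Prox}_{\frac{1}{\tau}h_1}(\zeta(\alpha))$, differs from $\zeta(\alpha)$ only by a uniformly bounded vector. First I would note that $P(\alpha)\in{\rm dom}\,h_1$ (the proximal point of a proper lsc convex function always lies in its domain), so that the optimality condition for the proximal mapping, $\zeta(\alpha)-P(\alpha)\in\partial\!\left(\frac{1}{\tau} h_1\right)\!(P(\alpha))=\frac{1}{\tau}\partial h_1(P(\alpha))$, together with \eqref{ass:bound_cdiff} gives
\[
\|e(\alpha)\|\le\frac{\bar c}{\tau},\qquad e(\alpha):=P(\alpha)-\zeta(\alpha).
\]

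Next I would substitute $P(\alpha)=\zeta(\alpha)+e(\alpha)$, with $\zeta(\alpha)$ as in \eqref{def:zeta}, into \eqref{def:L}. The contributions of $\bar x$ and of $\alpha_2(\tau I+u_1u_1^T)^{-1}u_2$ cancel, and a short calculation produces the affine identity
\[
\mathcal{L}(\alpha)=A\alpha-b(\alpha),\qquad
A=\begin{pmatrix}1+\dfrac{\|u_1\|^2}{\tau}&0\\[1mm] \dfrac{u_1^Tu_2}{\tau}&1-u_2^T(\tau I+u_1u_1^T)^{-1}u_2\end{pmatrix},\quad
b(\alpha)=\begin{pmatrix}u_1^Te(\alpha)\\ u_2^Te(\alpha)\end{pmatrix},
\]
where $\|b(\alpha)\|\le\frac{\bar c}{\tau}\sqrt{\|u_1\|^2+\|u_2\|^2}=:C$ is a constant independent of $\alpha$. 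If $A$ is nonsingular, then $\|\mathcal{L}(\alpha)\|\ge\sigma_{\min}(A)\|\alpha\|-C\to\infty$, which finishes the proof.

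It remains to establish $\det A\neq0$. Since $A$ is lower triangular, $\det A=\bigl(1+\|u_1\|^2/\tau\bigr)\bigl(1-u_2^T(\tau I+u_1u_1^T)^{-1}u_2\bigr)$; the first factor is positive because $\tau=\tau_k>0$ by \eqref{<gamma<}. For the second factor I would invoke the matrix determinant lemma: the BFGS matrix $B$ of \eqref{MLBFGS_B2} and \eqref{def:u} (that is, \eqref{SBroyden_B} with $\phi_k=0>\phi_k^\ast$) is positive definite by \eqref{sz>m}, hence $\det B>0$, while
\[
\det B=\det(\tau I+u_1u_1^T)\bigl(1-u_2^T(\tau I+u_1u_1^T)^{-1}u_2\bigr),\qquad \det(\tau I+u_1u_1^T)=\tau^{n-1}(\tau+\|u_1\|^2)>0,
\]
so $1-u_2^T(\tau I+u_1u_1^T)^{-1}u_2=\det B/\det(\tau I+u_1u_1^T)>0$ and $\det A>0$. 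Here Assumption~\ref{ass:Lip} together with \eqref{sz>m} and \eqref{<gamma<} are used only to ensure $s_{k-1}^Tz_{k-1}>0$ and $\tau_k>0$, so that $u_1,u_2$ in \eqref{def:u} and $B$ are well defined, and the linear independence of $u_1,u_2$ guarantees the setting of Theorem~\ref{thm:prox}. The main obstacle I anticipate is not a deep one: it is the careful verification that $P(\alpha)$ remains in ${\rm dom}\,h_1$ so that \eqref{ass:bound_cdiff} applies, and that the bound $\|e(\alpha)\|\le\bar c/\tau$ is genuinely uniform in $\alpha$; the remaining algebraic cancellation and the determinant identity are routine.
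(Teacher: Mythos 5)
Your proof is correct and takes essentially the same route as the paper's: both rewrite $\mathcal{L}(\alpha)$ as a lower-triangular linear map applied to $\alpha$ plus a residual that is bounded uniformly in $\alpha$ via the prox optimality condition $\tau(\zeta(\alpha)-{\rm Prox}_{\frac{1}{\tau}h_1}(\zeta(\alpha)))\in\partial h_1({\rm Prox}_{\frac{1}{\tau}h_1}(\zeta(\alpha)))$ together with \eqref{ass:bound_cdiff}, and then deduce coercivity. The only cosmetic difference is that you certify positivity of the $(2,2)$ coefficient through positive definiteness of $B$ and the matrix determinant lemma, while the paper computes $1-u_2^T(\tau I+u_1u_1^T)^{-1}u_2=(u_1^Tu_2)^2/(\tau^2+\tau\|u_1\|^2)$ explicitly and bounds it by positive constants.
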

%Note that the assumption \eqref{ass:bound_cdiff} is a mild condition. 
For example, if $h_1(x)=\lambda\|x\|_1$ ($\lambda>0$), then $\partial h_1(x)\subset [-\lambda,\lambda]^n$, 
and hence $\partial h_1(x)$ is bounded for any $x\in\mathbb{R}^n$.
Thus, condition~\eqref{ass:bound_cdiff} holds for a typical class of regularizer.

\section{Numerical experiments}\label{sec:numerical}
In this section, we investigate the numerical performance of Algorithm \ref{alg:proximal-DC-Newton}. 
We test least squares problems with the $\ell_{1-2}$ regularizer in Section~\ref{sec:l12} and with the log-sum penalty in Section~\ref{sec:LSP}. 
All the numerical experiments were performed in MATLAB 2019b on a PC with 2 GHz Quad-Core Intel Core i5 and 16GB RAM running macOS Catalina.

\subsection{Least squares problems with $\ell_{1-2}$ regularizer}\label{sec:l12}
We consider the least squares problems with the $\ell_{1-2}$ regularizer~\cite{yin2015minimization}:
\begin{equation}\label{LSQ12}
\min_{x\in\mathbb{R}^n} \frac12\|Ax-b\|^2 + \lambda\|x\|_1 - \lambda\|x\|,
\end{equation}
where $A\in\mathbb{R}^{m\times n}$, $b\in\mathbb{R}^m$, and $\lambda>0$ is a regularization parameter.

To solve \eqref{LSQ12}, we test the six methods given in Table \ref{tb:methodslist1}. 
In mBFGS(S-Newton) and mBFGS(V-FISTA), we use the memoryless BFGS formula, which is \eqref{SBroyden_B} with $\phi_k=0$, $\gamma_k=\frac{s_{k-1}^Tz_{k-1}}{z_{k-1}^Tz_{k-1}}$, $\tau_k=1$ and {\eqref{nu} with $\tilde\nu=10^{-6}$}. 
Note that these parameters were used in~\cite{COAP_Nakayama2021}
\footnote{
Conditions \eqref{sz>m} and \eqref{<gamma<} hold with $\underline\nu=10^{-6}$, $\bar\nu=L+10^{-6}$, $\underline\gamma=\frac{\underline\nu}{(L+\bar\nu)^2}$ and  $\overline\gamma=\frac{1}{\underline\nu}$.}. 
In mSR1(V-FISTA), we use the memoryless SR1 formula, which is \eqref{SBroyden_B} with $\phi_k=\frac{\gamma_ks_{k-1}^Tz_{k-1}}{(\gamma_kz_{k-1}-s_{k-1})^Ts_{k-1}}$, $\gamma_k=0.8\frac{s_{k-1}^Tz_{k-1}}{z_{k-1}^Tz_{k-1}}$, $\tau_k=1$ and \eqref{nu}. 
In L-BFGS(TFOCS), we use  the limited memory BFGS method~\cite{nocedal1980updating,nocedal2006numerical} as $B_k$.
For the line search in Algorithm~\ref{alg:proximal-DC-Newton}, we set $\delta=0.5$ and $\beta_k=0.5$. 
To solve the subproblem \eqref{inexact_prox} in mBFGS(S-Newton), we use Algorithm~\ref{alg:semismoothNewton} with $\sigma=10^{-4}$, $\rho=0.5$, 
%{\color{red}\eqref{eq:jac}} 
and $\alpha_0=(0,0)^T$ and we set $\theta_k=0.99$.  
We choose \eqref{eq:jac} in Appendix~\ref{sec:Adirectional} as $V_j$ in Algorithm~\ref{alg:semismoothNewton}. 
As mentioned in Section~\ref{subsec:prox}, there was no case where  condition \eqref{eq:descent_direction}  was violated. 
For mBFGS(V-FISTA) and mSR1(V-FISTA), we use Variant-FISTA (V-FISTA) and set $\theta_k=0.1$, as in \cite{COAP_Nakayama2021}. 
For L-BFGS(TFOCS), we use the Templates for First-Order Conic Solvers (TFOCS)~\cite{becker2011templates}, which is a well-known software for solving convex programming. Here, pDCAe is the proximal DCA with extrapolation proposed by Wen et al.~\cite{COAP_Wen2018} and we use the same parameters as~\cite{COAP_Wen2018}\footnote{In pDCAe, the constant $L$ in \eqref{Lip_Ass} is computed via the MATLAB code ``{\rm L=norm(A*A')}"; when $m\leq2000$, and by ``{\rm opts.issym = 1; L= eigs(A*A',1,'LM',opts);}" otherwise.}. 
The nmAPG approach is the nonmonotone accelerated proximal gradient method proposed by Li and Lin~\cite{NIPS_Li2015}\footnote{We implement Algorithm 4 in the supplemental of \cite{NIPS_Li2015}.}, which is a well-known efficient proximal gradient-type method for nonconvex functions. Note that mSR1(V-FISTA) corresponds to the method of Liu and Takeda~\cite{OptOnline_Liu2021}, and L-BFGS(TFOCS) corresponds to %an inexact 
a DCA version of the proximal Newton-type method~\cite{SIOPT_Lee2014}, although they are slightly different. 
For all methods and problems, the initial point $x_0\in\mathbb{R}^n$ was set as the zero vector. The stopping conditions were \[\|x_k^+-x_k\|\leq 10^{-5}\max\{1,\|x_k\|\}\] for Algorithm~\ref{alg:proximal-DC-Newton}, and $\|x_{k+1}-x_k\|\leq 10^{-5}\max\{1,\|x_k\|\}$ for the other tested methods.

\begin{table}
	\caption{Tested methods}
	\centering
\begin{tabular}{c|l|c}\hline
Method name & Algorithm & How to solve \eqref{inexact_prox}\\
\hline\hline
mBFGS(S-Newton) & Algorithm~\ref{alg:proximal-DC-Newton} with memoryless BFGS formula & Algorithm~\ref{alg:semismoothNewton} \\
mBFGS(V-FISTA) & Algorithm~\ref{alg:proximal-DC-Newton} with memoryless BFGS formula & V-FISTA~\cite{COAP_Nakayama2021}\\
mSR1(V-FISTA) & Algorithm~\ref{alg:proximal-DC-Newton} with memoryless SR1 formula & V-FISTA~\cite{COAP_Nakayama2021} \\
L-BFGS(TFOCS) & Algorithm~\ref{alg:proximal-DC-Newton} with limited memory BFGS method& TFOCS~\cite{becker2011templates} \\
pDCAe & proximal DCA with extrapolation~\cite{COAP_Wen2018}  & - \\
nmAPG &  nonmonotne accelerated proximal gradient method~\cite{NIPS_Li2015} &- \\
\hline
\end{tabular}\label{tb:methodslist1}
\end{table}

For $A$ and $b$ in \eqref{LSQ12}, we generate a matrix and a vector randomly following Wen et al.~\cite{COAP_Wen2018}: 
(i) We generate a matrix $A$ with independent and identically distributed (i.i.d.) standard Gaussian entries, and then normalize this matrix so that the columns of $A$ have unit norms. 
(ii) A subset $T$ of size $p$ is then chosen uniformly at random from $\{1, 2, 3, \cdots, n\}$ and a $p$-sparse vector $\hat{x}\in\mathbb{R}^n$ with i.i.d. standard Gaussian entries on $T$ is generated. 
(iii) We set $b = A\hat{x}+0.01u,$ where $u\in\mathbb{R}^m$ is a random vector with i.i.d. standard Gaussian entries.

We consider $(m,n,p)=(720l,2560l,80l)$ for $l=1,2,...,5$. For each triple $(m,n,p)$, we generate $20$ instances randomly according to the above steps. 

Fig.~\ref{fig:time-L12} and \ref{fig:iter-L12}, respectively, show the average central processing unit (CPU) time and average number of iterations for each $l$ with $\lambda=0.01$ (top-left), $\lambda=0.005$ (top-right), $\lambda=0.001$ (bottom-left) and $\lambda=0.0005$ (bottom-right). 
%Fig.~\ref{fig:iter-L12} shows average of number of iterations for each $l$ with $\lambda=0.01$ (top-left), $\lambda=0.005$ (top-right), $\lambda=0.001$ (bottom-left) and $\lambda=0.0005$ (bottom-right). 
In Fig.~\ref{fig:iter-L12}, we use the same markers as in Fig.~\ref{fig:time-L12}, so we omit the legend.

For all cases, mBFGS(S-Newton) was superior to or at least comparable with the other methods from the viewpoint of CPU time and the number of iterations. On the other hand, mSR1(V-FISTA) was comparable with mBFGS(S-Newton) for $\lambda=0.01$, but the performance of mSR1 (V-FISTA) deteriorated slightly as $\lambda$ decreased. For mBFGS(V-FISTA), the number of iterations tended to increase as $\lambda$ decreased. L-BFGS(TFOCS) had the lowest number of iterations, but the worst CPU time due to the high computing costs. For pDCAe, the CPU time was comparable with that of mBFGS(S-Newton) for $\lambda=0.001$ and $0.005$. However, this method deteriorated as $\lambda$ became large, and the number of iterations was high for all cases. 
For nmAPG, the performance was in the middle of all methods. 
Summarizing the results, the numerical experiments showed the effectiveness of Algorithm~\ref{alg:proximal-DC-Newton} with Algorithm~\ref{alg:semismoothNewton}. 

%%% figure
\begin{figure}[h]
  \begin{center}
    \begin{tabular}{c}
      % 1
      \begin{minipage}{0.45\hsize}
        \begin{center}
          \includegraphics[width=.95\linewidth]{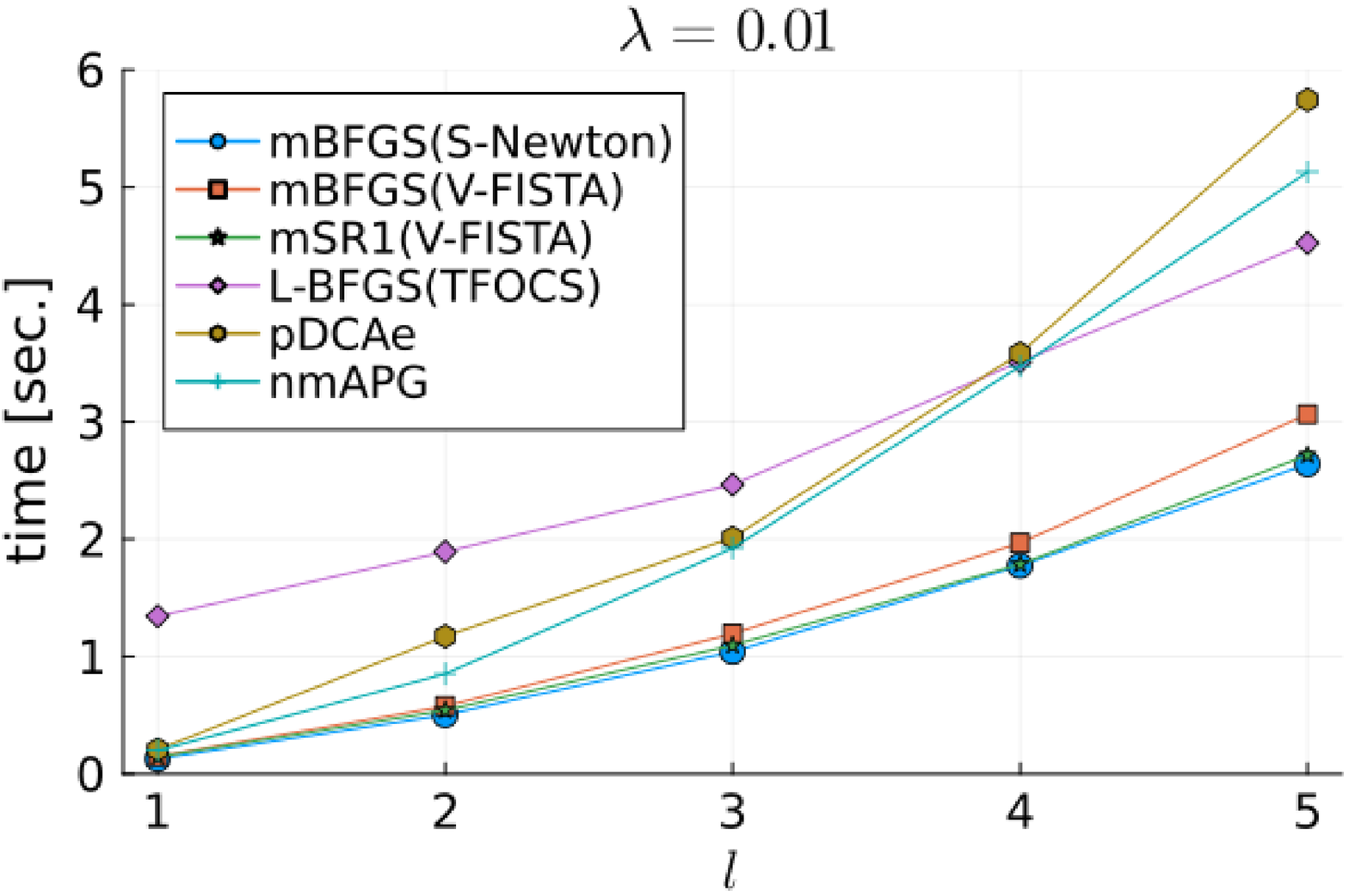}
                  \end{center}
      \end{minipage}
      % 2
      \begin{minipage}{0.45\hsize}
        \begin{center}
          \includegraphics[width=.95\linewidth]{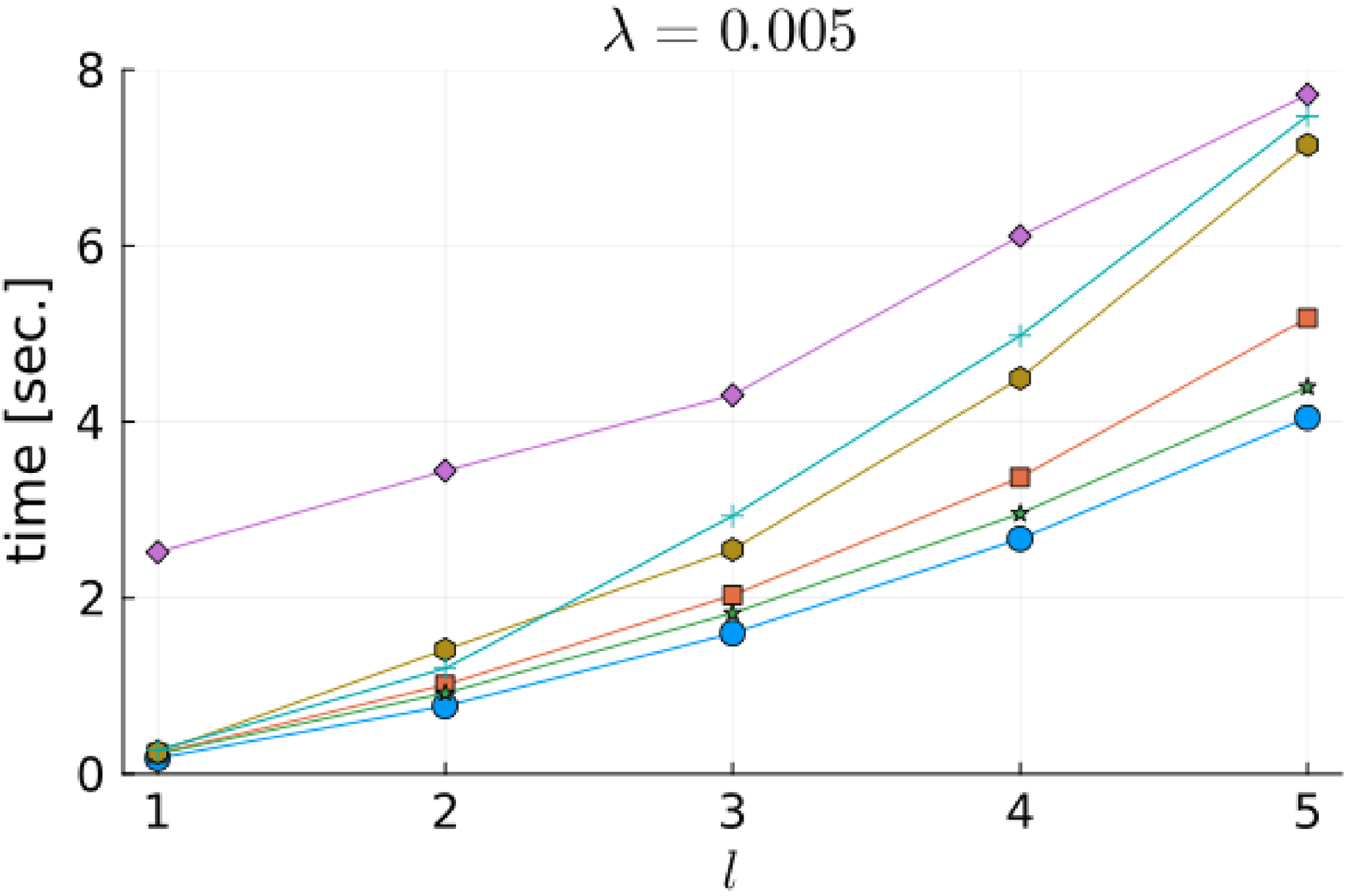}
                  \end{center}
      \end{minipage}
      \\
            % 1
      \begin{minipage}{0.45\hsize}
        \begin{center}
          \includegraphics[width=.95\linewidth]{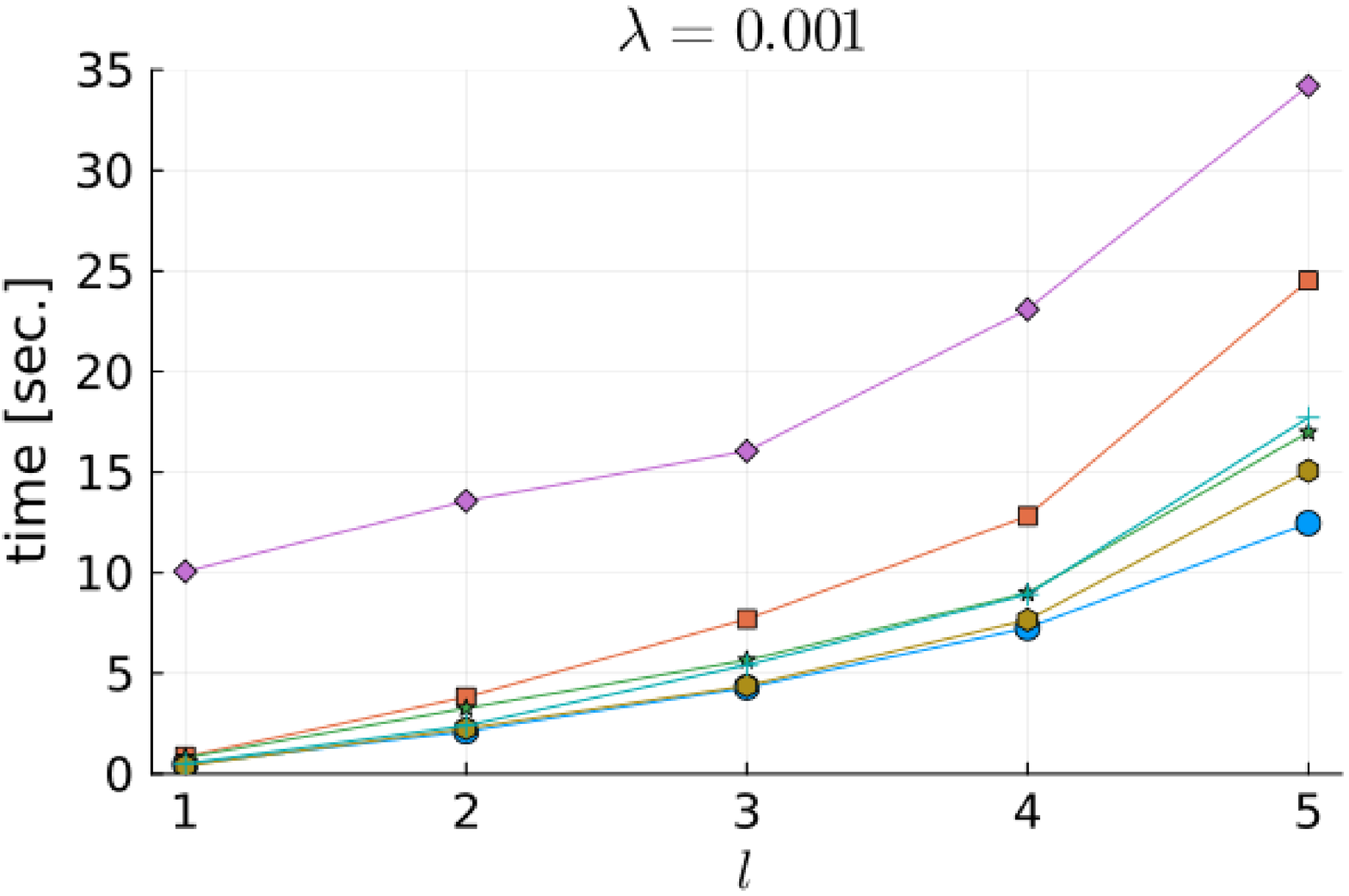}
                  \end{center}
      \end{minipage}
      % 2
      \begin{minipage}{0.45\hsize}
        \begin{center}
          \includegraphics[width=.95\linewidth]{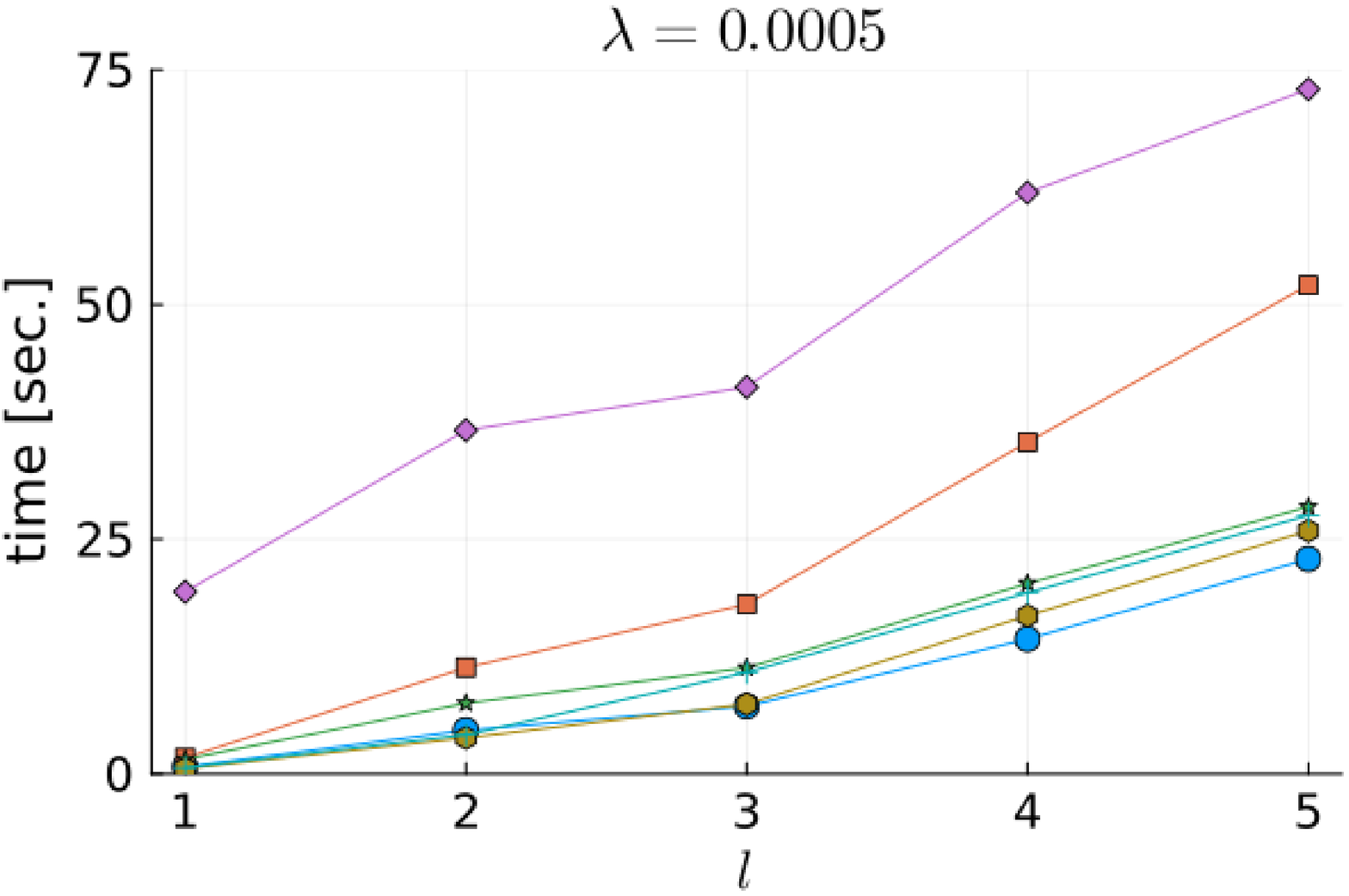}
                  \end{center}
      \end{minipage}
    \end{tabular}
    \caption{Average CPU time to solve \eqref{LSQ12}}
    \label{fig:time-L12}
  \end{center}
\end{figure}

\begin{figure}[h]
  \begin{center}
    \begin{tabular}{c}
      % 1
      \begin{minipage}{0.45\hsize}
        \begin{center}
          \includegraphics[width=.95\linewidth]{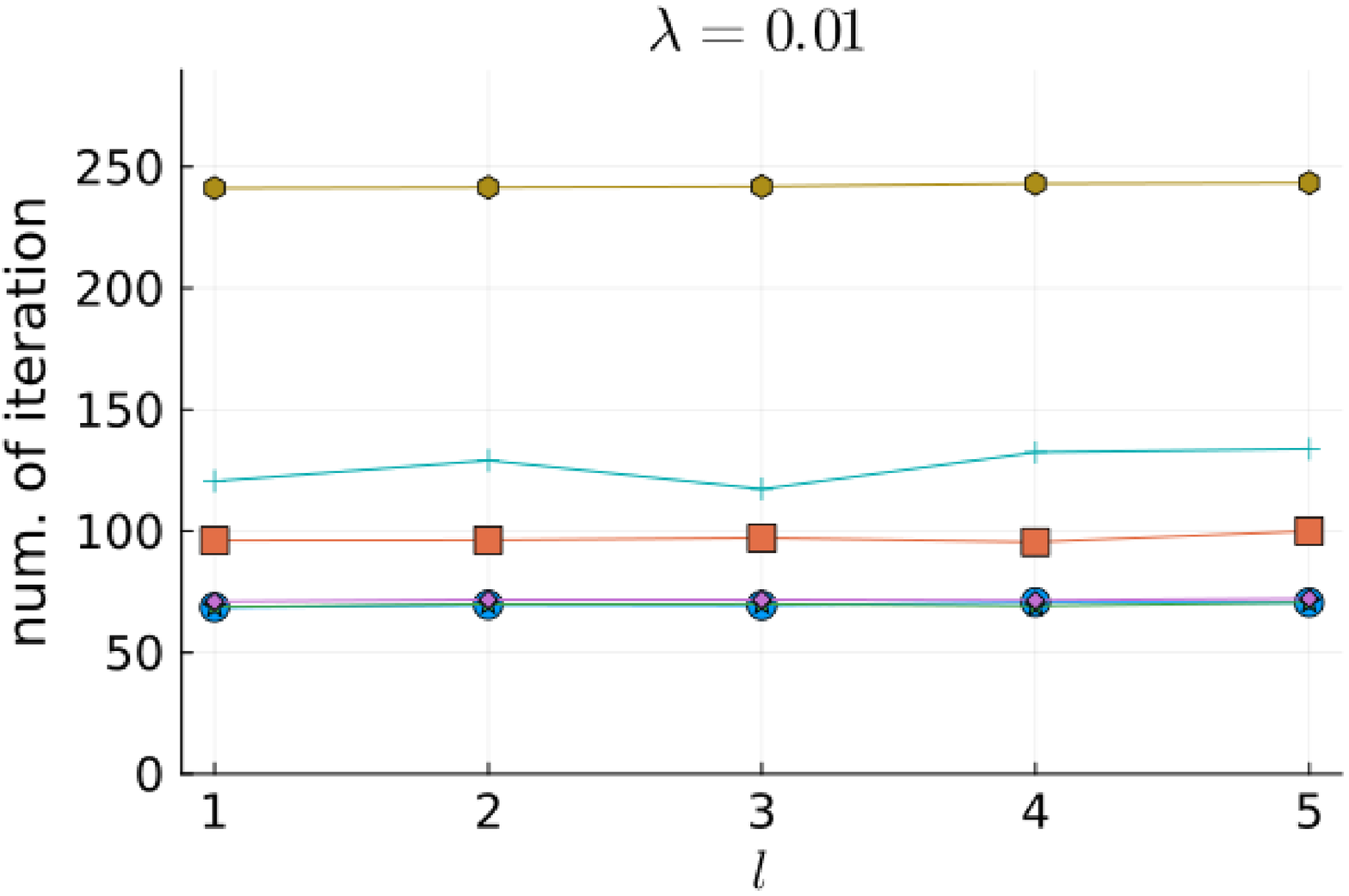}
                  \end{center}
      \end{minipage}
      % 2
      \begin{minipage}{0.45\hsize}
        \begin{center}
          \includegraphics[width=.95\linewidth]{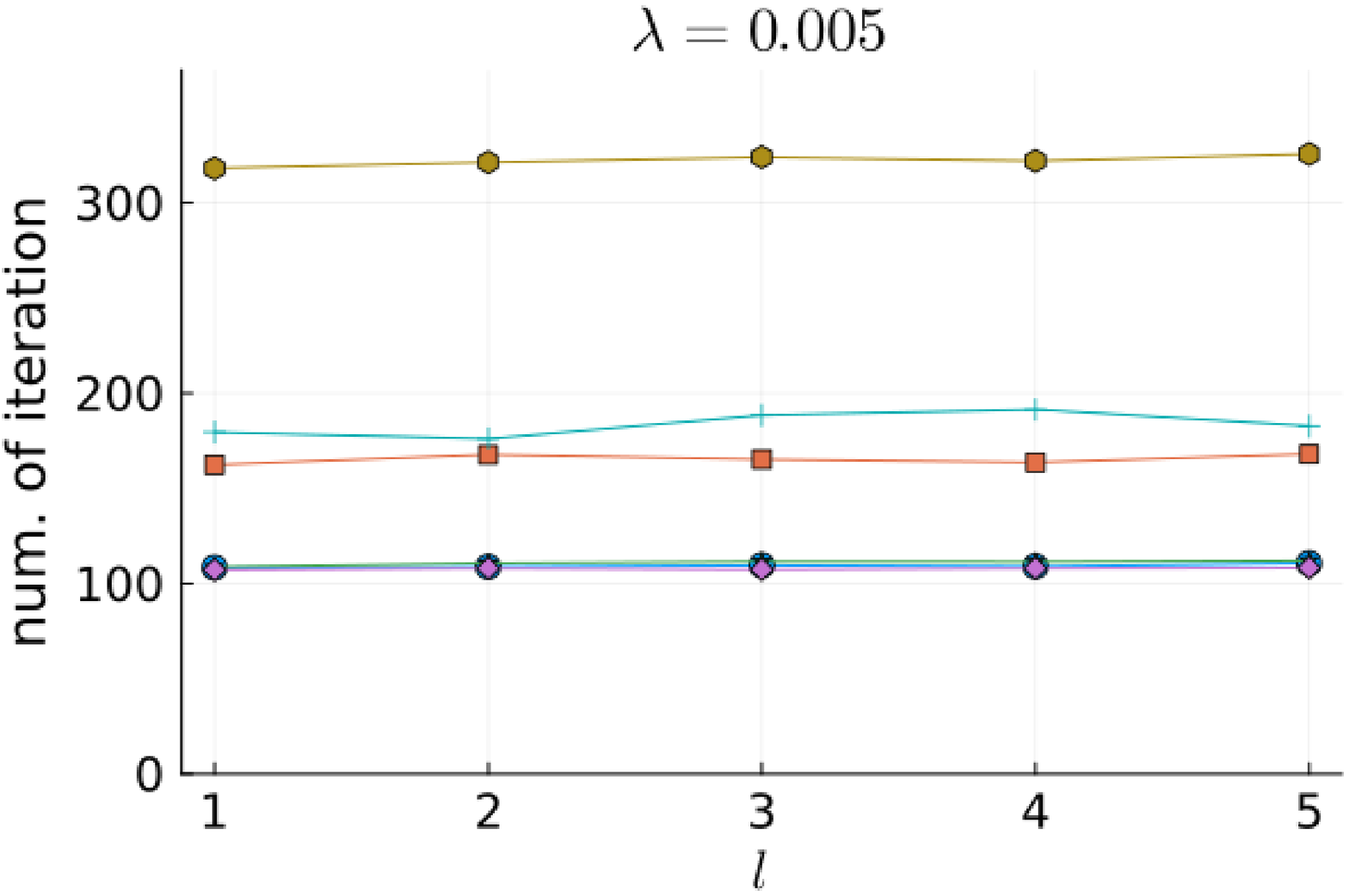}
                  \end{center}
      \end{minipage}
      \\
            % 1
      \begin{minipage}{0.45\hsize}
        \begin{center}
          \includegraphics[width=.95\linewidth]{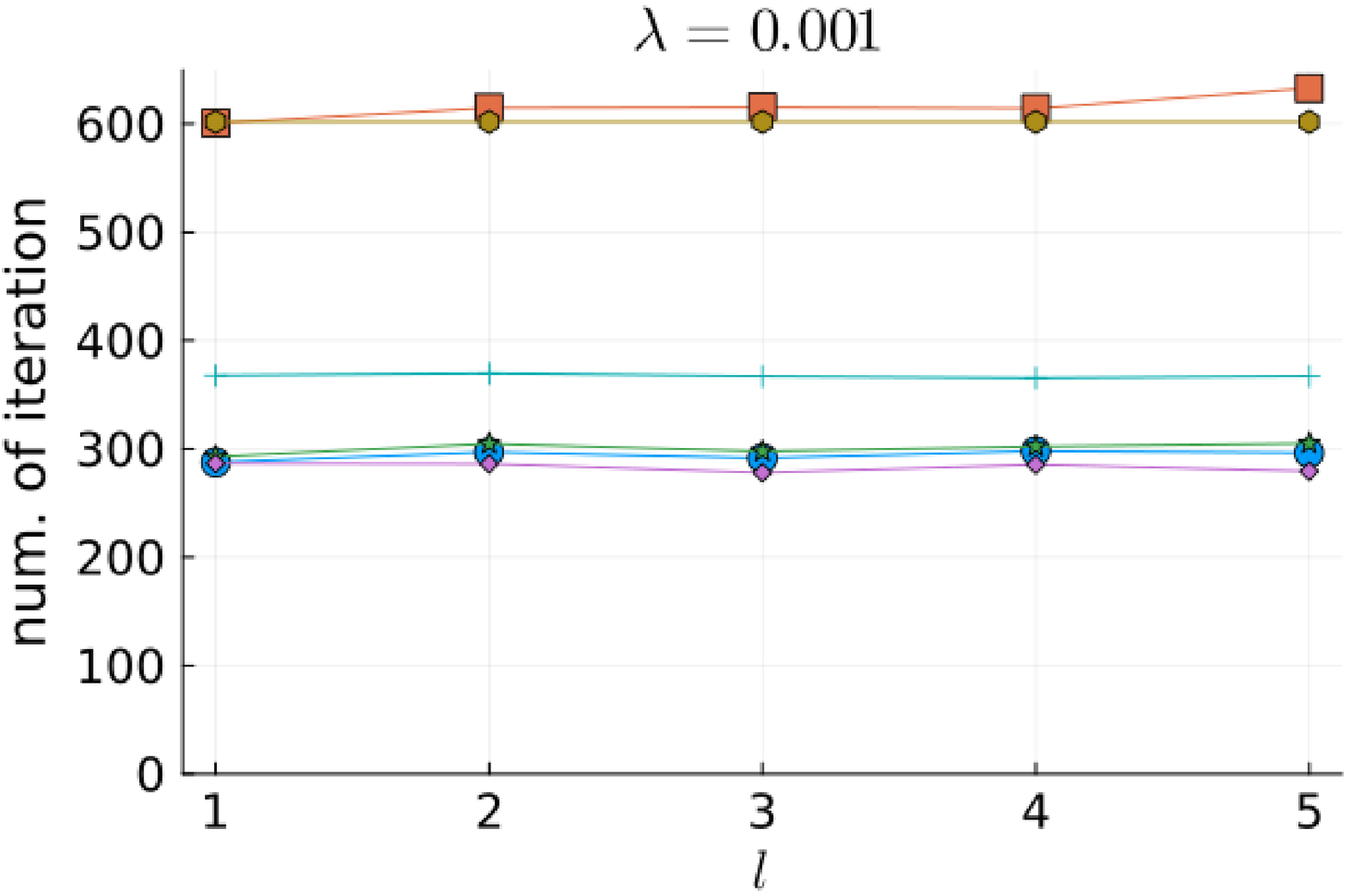}
                  \end{center}
      \end{minipage}
      % 2
      \begin{minipage}{0.45\hsize}
        \begin{center}
          \includegraphics[width=.95\linewidth]{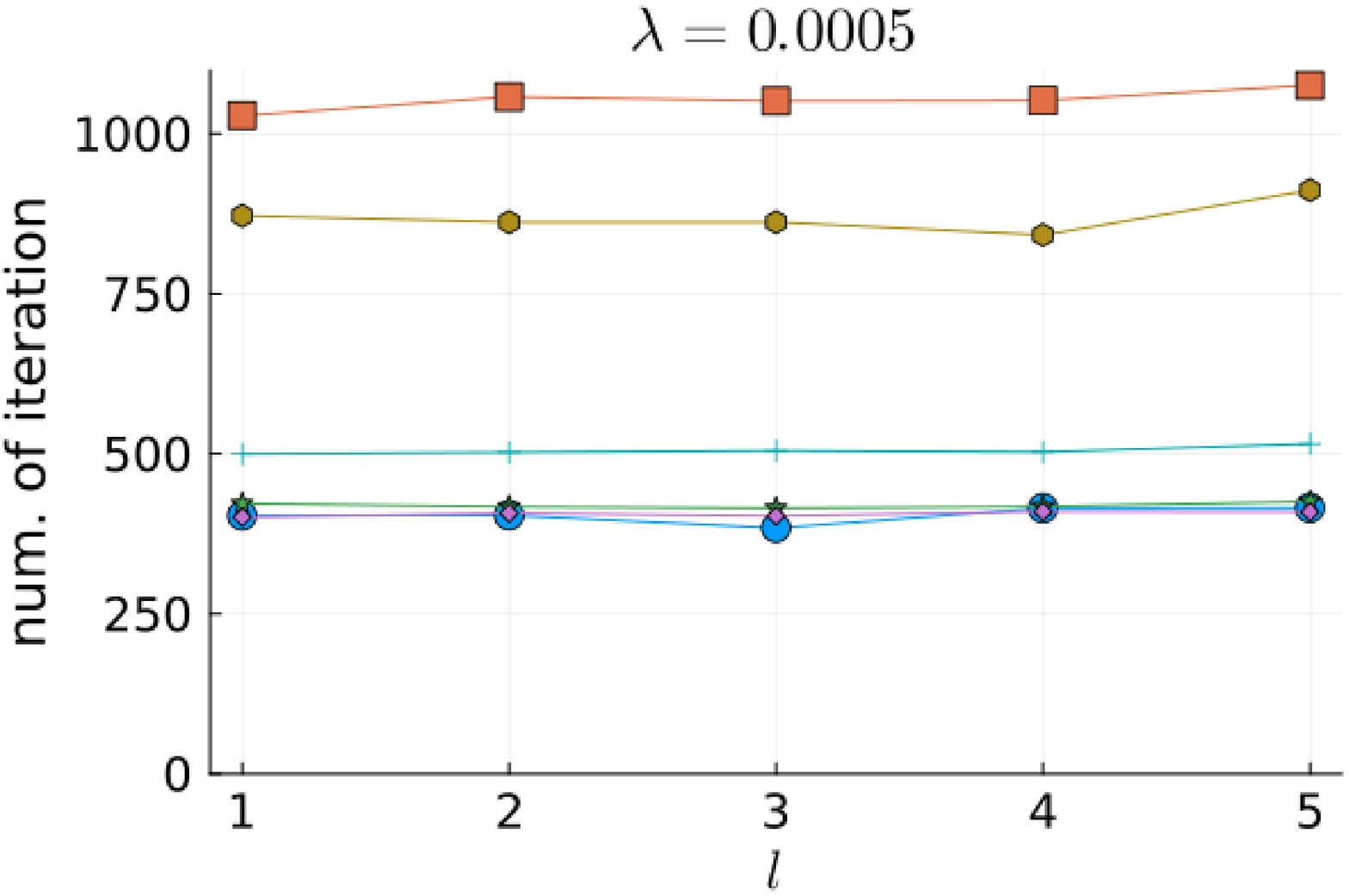}
                  \end{center}
      \end{minipage}
    \end{tabular}
    \caption{Average number of iterations to solve \eqref{LSQ12}}
    \label{fig:iter-L12}
  \end{center}
\end{figure}

%%%%%%%%%%%%%%%%%%%%%%%%%%55
\subsection{Least squares problems with log-sum penalty}\label{sec:LSP}
We consider the least squares problems with the log-sum penalty~\cite{candes2008enhancing}:
\begin{equation}\label{LSP}
\min_{x\in\mathbb{R}^n} \frac12\|Ax-b\|^2 + \lambda\sum_{i=1}^n\log\left({1+\frac{\lvert(x)_i\rvert}{\epsilon}}\right),
\end{equation}
where $A\in\mathbb{R}^{m\times n}$, $b\in\mathbb{R}^m$, $\lambda>0$ is a regularization parameter, and $\epsilon$ is a parameter. % and $(x)_i$ is the $i$-th component of $x$.
Since this problem can be rewritten as 
\[
\min_{x\in\mathbb{R}^n} \underbrace{\frac12\|Ax-b\|^2}_{g(x)} + \underbrace{\frac\lambda\epsilon\|x\|_1}_{h_1(x)} - \underbrace{\lambda\sum_{i=1}^n\left(\frac{\lvert (x)_i\rvert}{\epsilon} - \log\left(\lvert(x)_i\rvert+\epsilon\right) + \log \epsilon \right)}_{h_2(x)},
\]
we can adopt Algorithm~\ref{alg:proximal-DC-Newton}.

In this subsection, we generate $A$ and $b$ as in Section~\ref{sec:l12}, and set $\epsilon=0.5$. We test with the same settings as in Section~\ref{sec:l12}. Since mBFGS(V-FISTA) and L-BFGS(TFOCS) performed poorly in preliminary experiments, we omit these methods.

Fig.~\ref{fig:time-LSP} and \ref{fig:iter-LSP}, respectively, show average CPU time and average number of iterations for each $l$ with $\lambda=0.01$ (top-left), $\lambda=0.005$ (top-right), $\lambda=0.001$ (bottom-left) and $\lambda=0.0005$ (bottom-right). %Fig.~\ref{fig:iter-LSP} shows average of number of iterations for each $l$ with $\lambda=0.01$ (top-left), $\lambda=0.005$ (top-right), $\lambda=0.001$ (bottom-left) and $\lambda=0.0005$ (bottom-right). In 
Fig.~\ref{fig:iter-LSP} uses the same markers as Fig.~\ref{fig:time-LSP}, so we omit the legend.

These experiments have the same tendencies as Section~\ref{sec:l12}. 
For all cases, mBFGS(S-Newton) was superior to the other methods from the perspectives of CPU time and the number of iterations. 
Though the number of iterations was almost the same mBFGS(S-Newton) and mSR1(V-FISTA), mBFGS(S-Newton) had better CPU time.  Thus, this implies that Algorithm~\ref{alg:semismoothNewton} is efficient.

%%% figure
\begin{figure}[h]
  \begin{center}
    \begin{tabular}{c}
      % 1
      \begin{minipage}{0.45\hsize}
        \begin{center}
          \includegraphics[width=.95\linewidth]{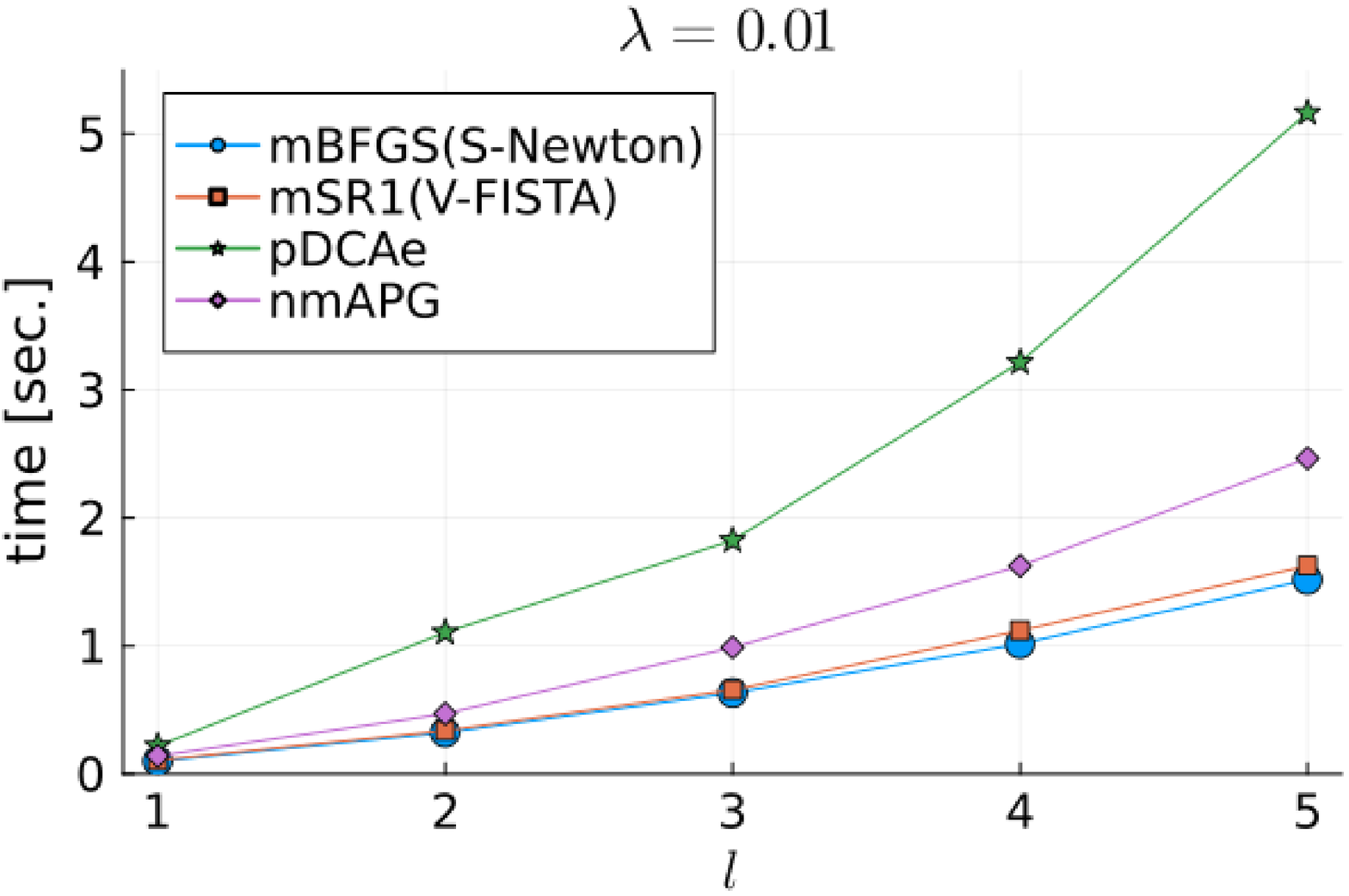}
                  \end{center}
      \end{minipage}
      % 2
      \begin{minipage}{0.45\hsize}
        \begin{center}
          \includegraphics[width=.95\linewidth]{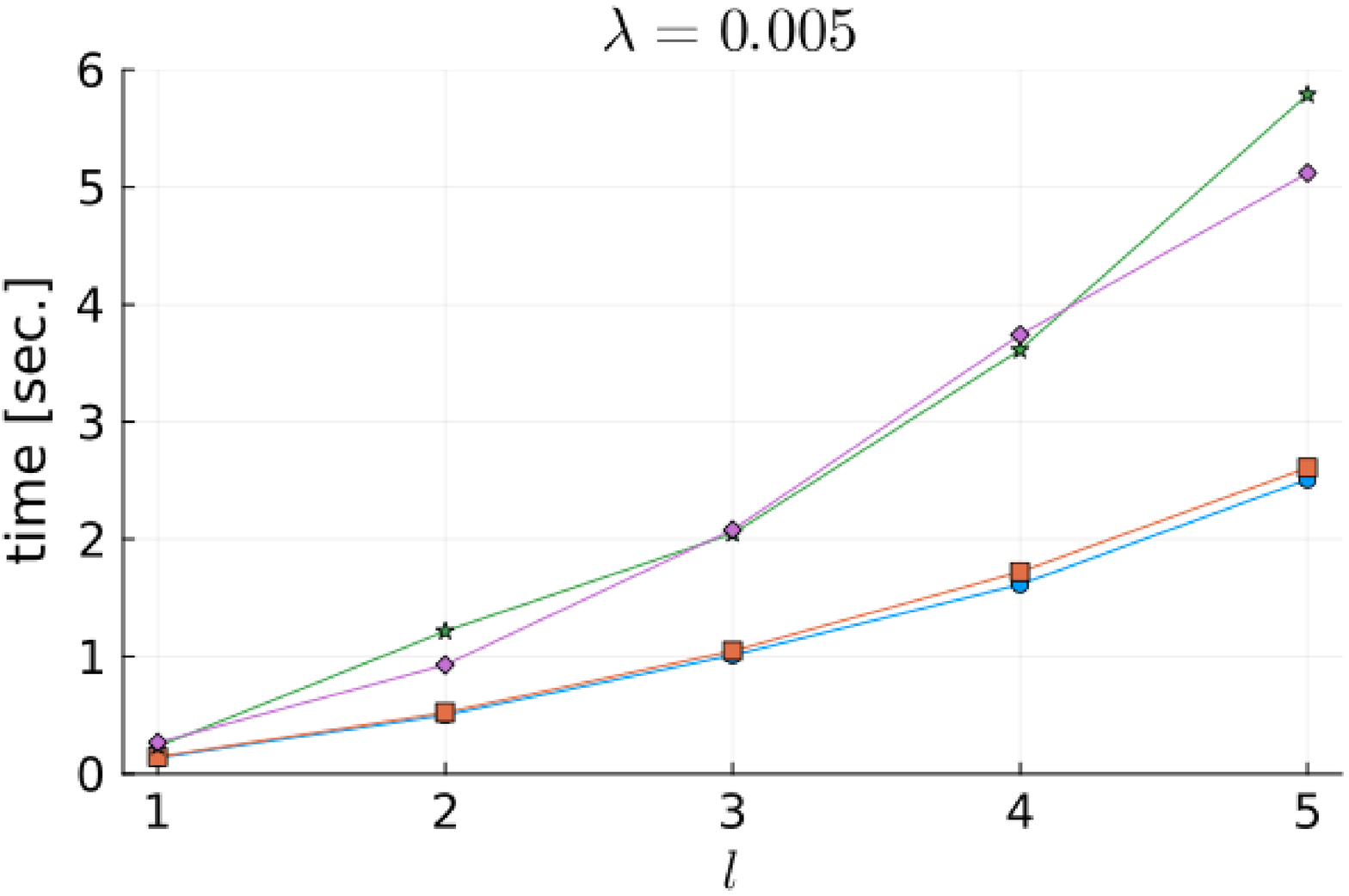}
                  \end{center}
      \end{minipage}
      \\
            % 1
      \begin{minipage}{0.45\hsize}
        \begin{center}
          \includegraphics[width=.95\linewidth]{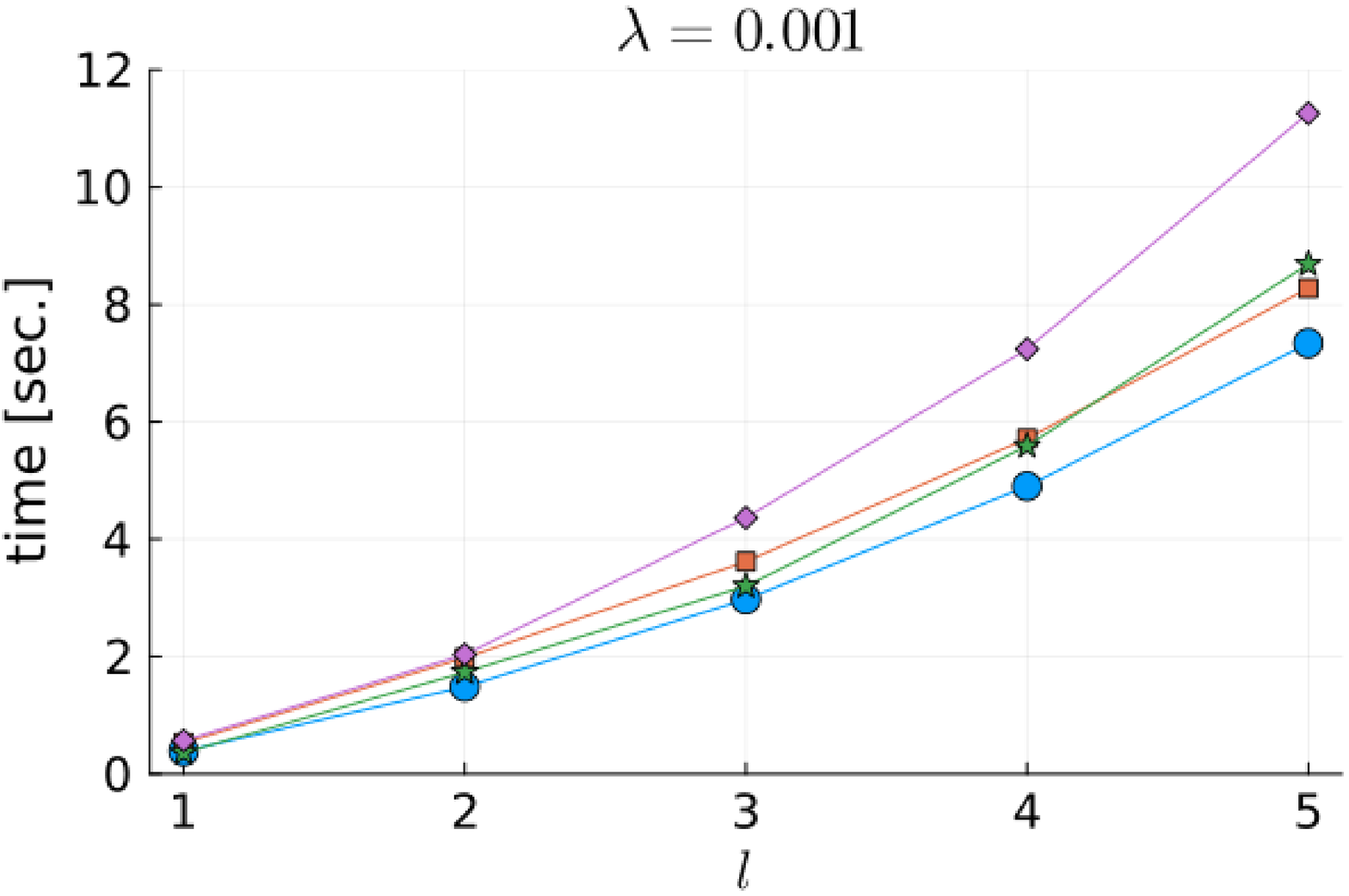}
                  \end{center}
      \end{minipage}
      % 2
      \begin{minipage}{0.45\hsize}
        \begin{center}
          \includegraphics[width=.95\linewidth]{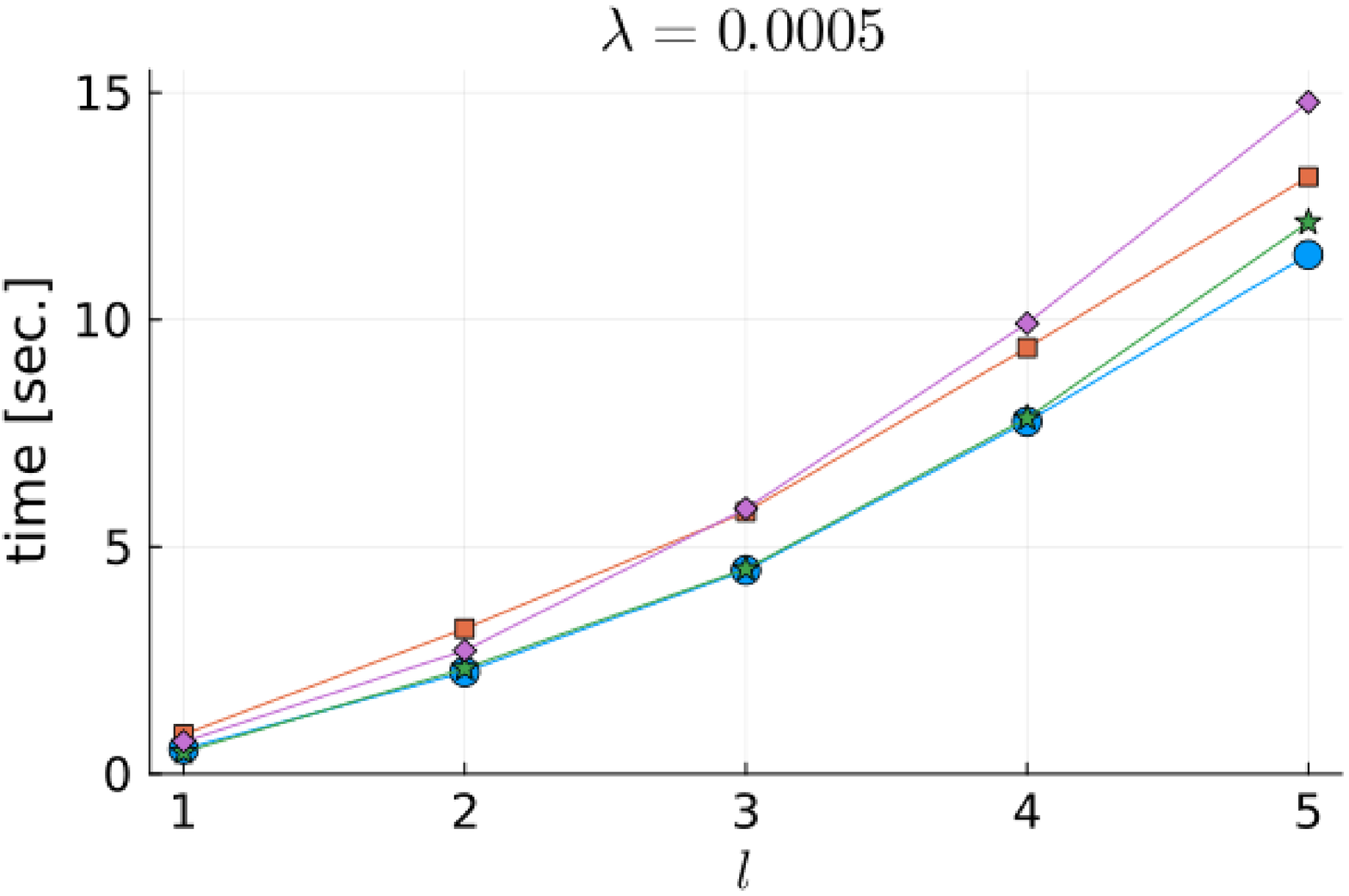}
                  \end{center}
      \end{minipage}
    \end{tabular}
    \caption{Average CPU time to solve \eqref{LSP}}
    \label{fig:time-LSP}
  \end{center}
\end{figure}

\begin{figure}[h]
  \begin{center}
    \begin{tabular}{c}
      % 1
      \begin{minipage}{0.45\hsize}
        \begin{center}
          \includegraphics[width=.95\linewidth]{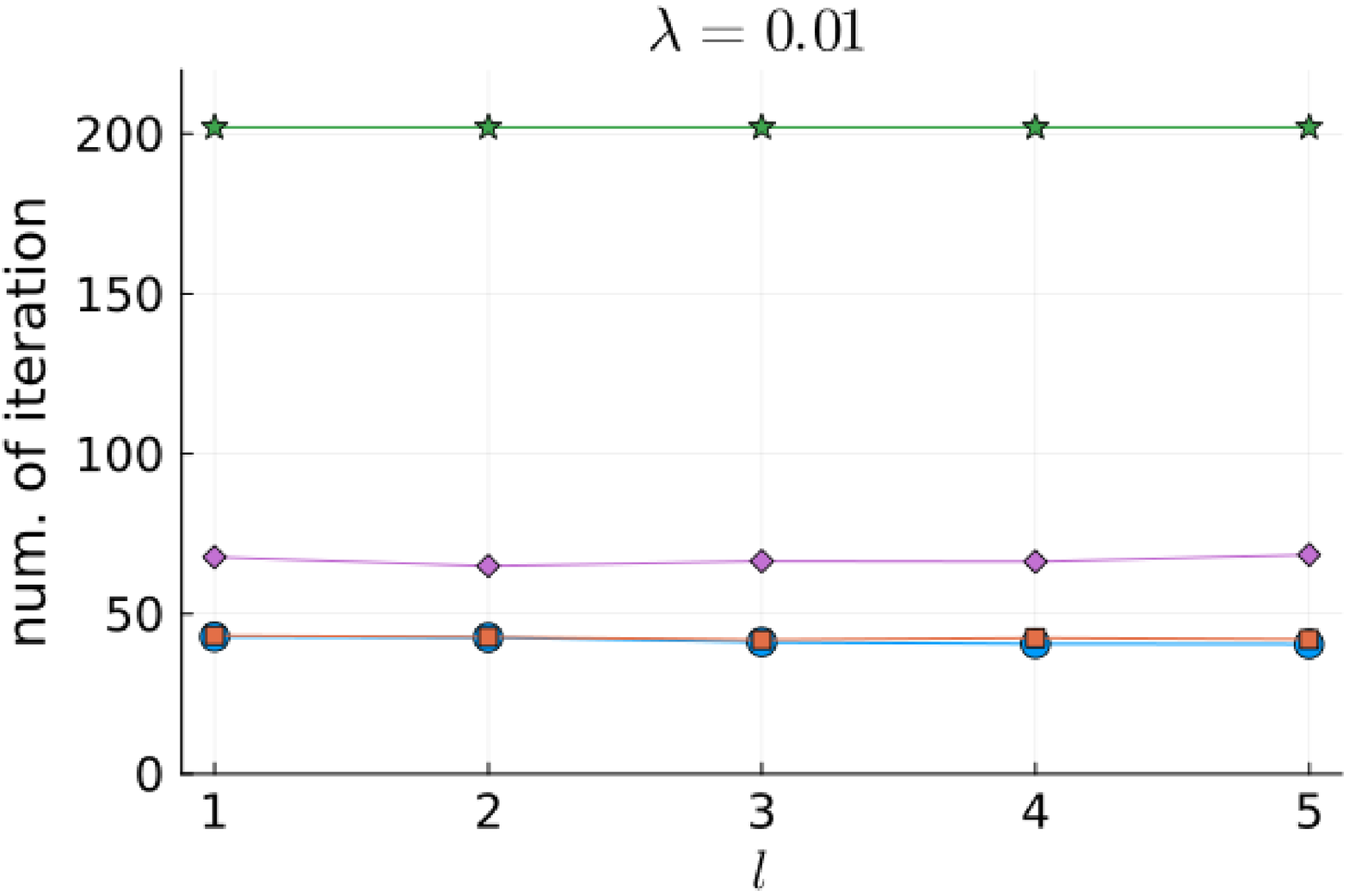}
                  \end{center}
      \end{minipage}
      % 2
      \begin{minipage}{0.45\hsize}
        \begin{center}
          \includegraphics[width=.95\linewidth]{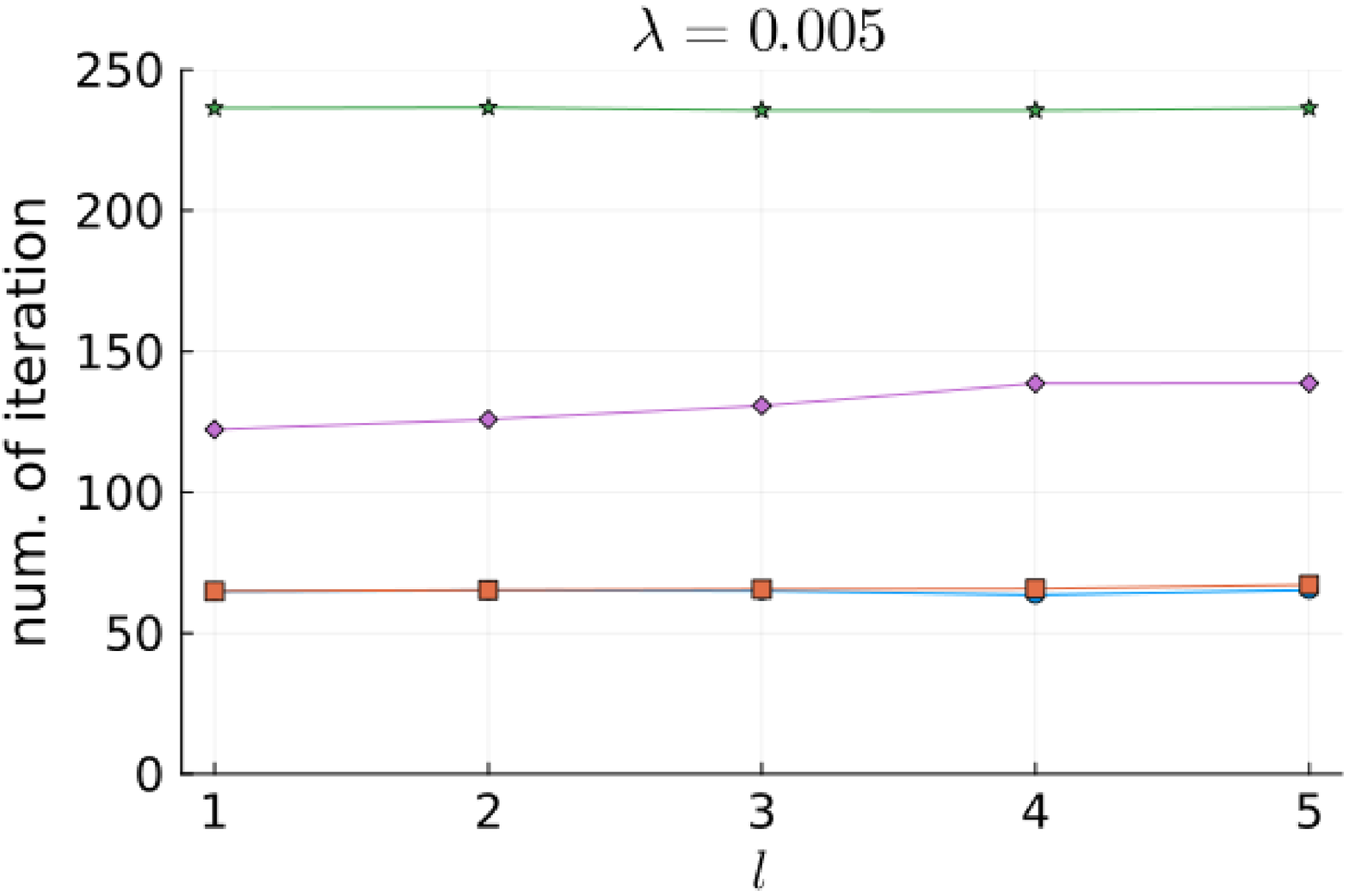}
                  \end{center}
      \end{minipage}
      \\
            % 1
      \begin{minipage}{0.45\hsize}
        \begin{center}
          \includegraphics[width=.95\linewidth]{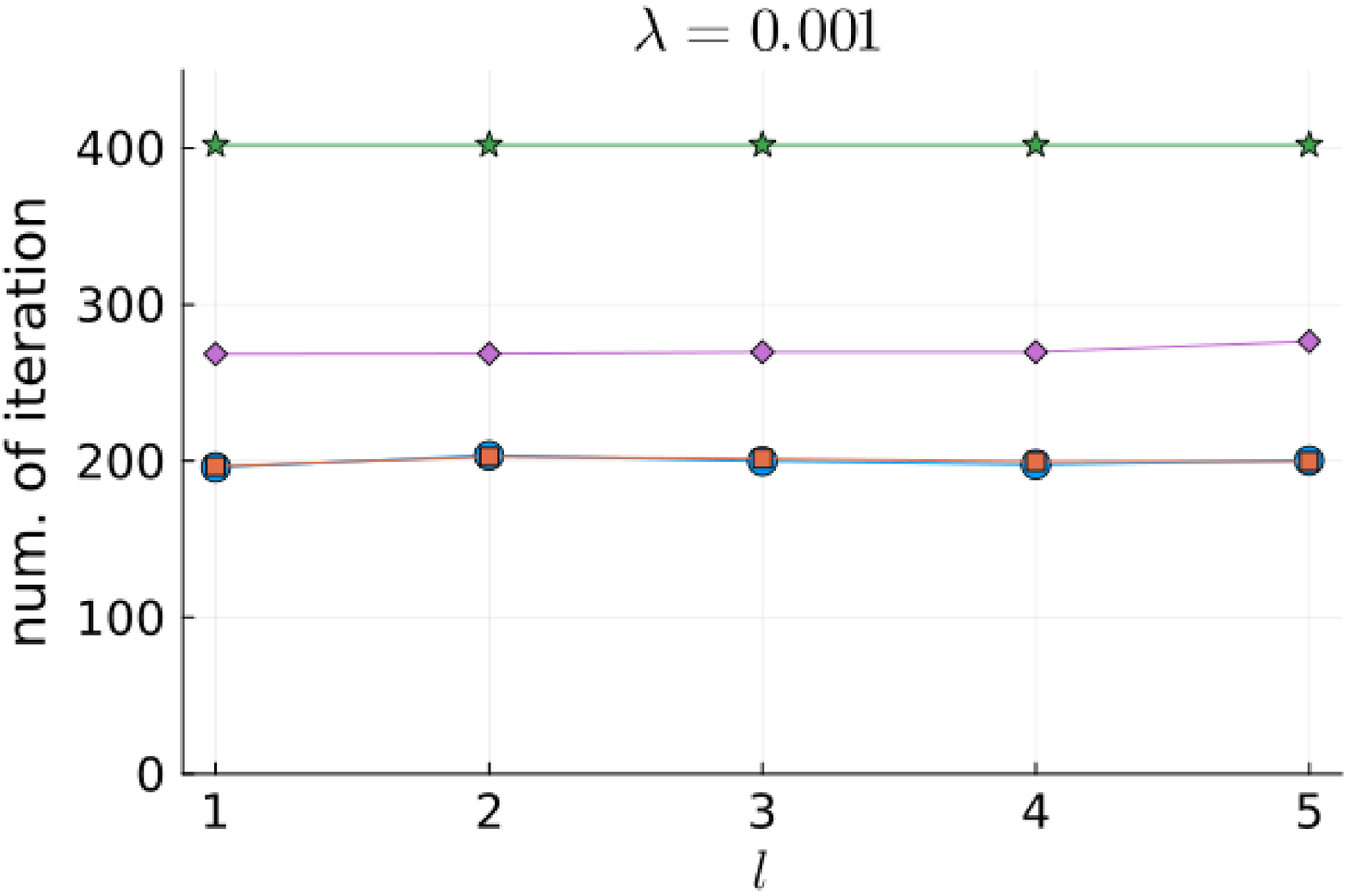}
                  \end{center}
      \end{minipage}
      % 2
      \begin{minipage}{0.45\hsize}
        \begin{center}
          \includegraphics[width=.95\linewidth]{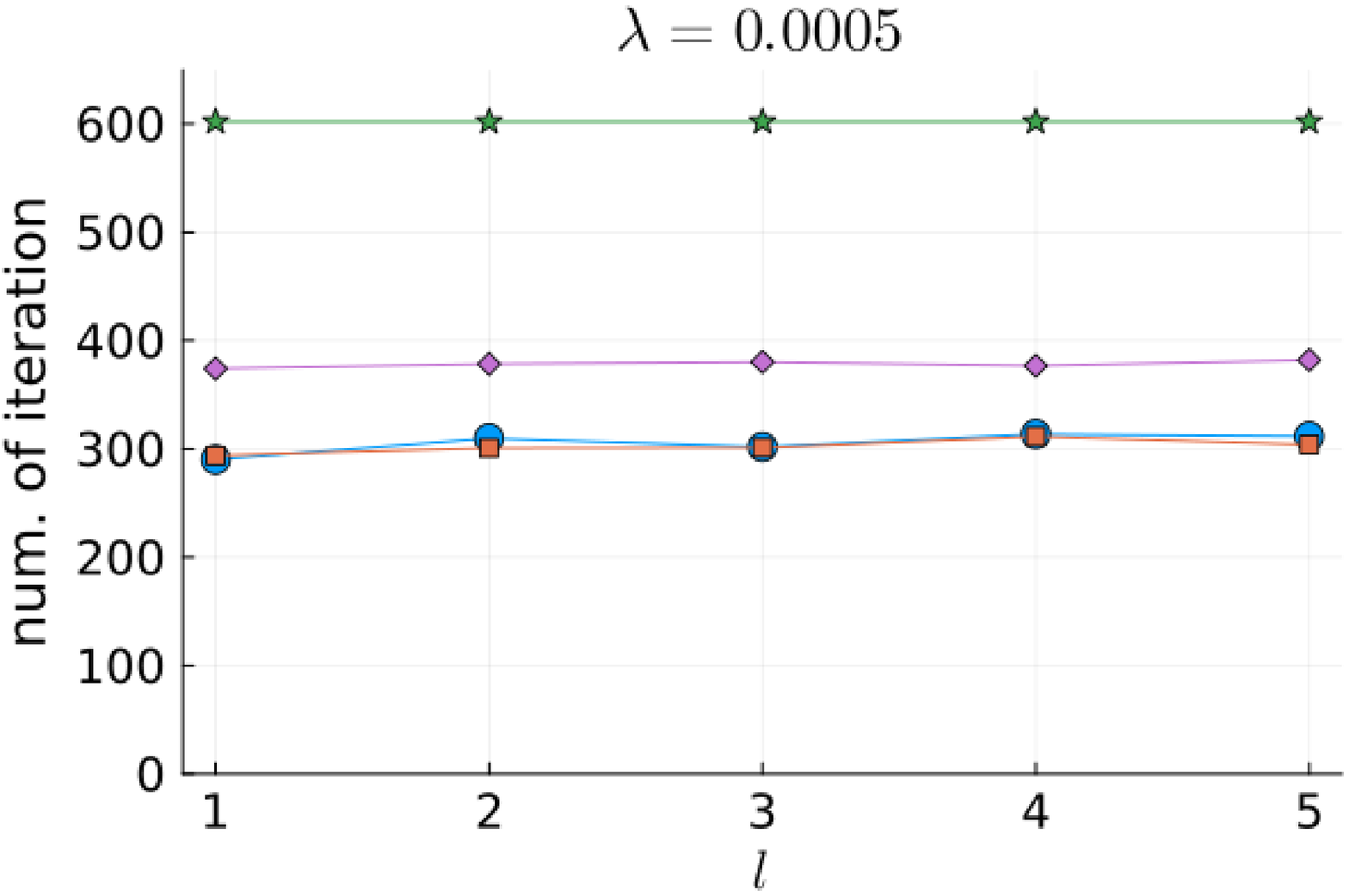}
                  \end{center}
      \end{minipage}
    \end{tabular}
    \caption{Average number of iterations to solve \eqref{LSP}}
    \label{fig:iter-LSP}
  \end{center}
\end{figure}

\section{Concluding remarks}\label{sec:con}
We proposed an inexact proximal DC Newton-type method (Algorithm~\ref{alg:proximal-DC-Newton}) and showed its global convergence properties. 
We established concrete choices for the memoryless quasi-Newton matrices \eqref{SBroyden_B}  for the scaled proximal mappings. 
Moreover, we adopted the semi-smooth Newton method (Algorithm~\ref{alg:semismoothNewton}) in the computing scaled proximal mappings. 
In our numerical experiments, the proposed algorithm outperformed existing methods for two classes of DC regularized least squares problems.

\begin{appendices}
%\section{Section title of first appendix}\label{secA1}
\section{Proof of Lemma \ref{lem_line}}\label{Appendix:lemma1}
\begin{proof}
It follows from  $\eta\in(0,1]$, $x_k+\eta d_k = \eta x_k^+ + (1-\eta)x_k$ and the convexity of $h_1$ %\eqref{convex_h1} 
that
\[
h_1(x_k+\eta d_k) \leq \eta h_1(x_k^+) +(1-\eta)h_1(x_k).
\]
On the other hand, $\xi_k\in\partial h_2(x_k)$ implies%, we have %by \eqref{convex_h2}
\[
 h_2(x_k)+\eta \xi_k^Td_k \leq 
h_2(x_k+\eta d_k).
\]
From the inequalities and Assumption~\ref{ass:Lip}, %\eqref{Lip<}, 
we obtain
\begin{align*}
f(x_k + \eta d_k)-f(x_k) &= g(x_k + \eta d_k)-g(x_k)  \\ 
 & \qquad + h_1(x_k + \eta d_k) -h_1(x_k)-h_2(x_k + \eta d_k)+h_2(x_k) \\
	&\leq (\nabla g(x_k)-\xi_k)^T(\eta d_k) + \frac{L}{2}\|\eta d_k\|^2 + \eta \left(h_1(x_k^+) - h_1(x_k)\right)\\
	&=\eta \left((\nabla g(x_k)-\xi_k)^Td_k+h_1(x_k^+)-h_1(x_k)\right)+\frac{\eta^2L}{2}\|d_k\|^2 .
\end{align*}
Therefore, \eqref{lem_alpha} holds.

Since it follows from \eqref{search} and \eqref{sub_opt} that
\begin{equation*}\label{subgra_h+}
r_k - \nabla g(x_k) + \xi_k -B_kd_k \in \partial h_1(x_k^+),
\end{equation*}
we obtain %from \eqref{convex_h2} that
\begin{equation*}\label{lem2_3}
 h_1(x_k^+) + (r_k - \nabla g(x_k) +\xi_k- B_k d_k)^T(-d_k) \leq h_1(x_k).
\end{equation*}
Hence, we have
\begin{equation}\label{lem2_3_2}
(\nabla g(x_k)-\xi_k)^Td_k + h_1(x_k^+) - h_1(x_k)  \leq r_k^Td_k-\|d_k\|_{B_k}^2.
\end{equation}
Using \eqref{inexact} and the Cauchy-Schwarz inequality, we get
\begin{equation}\label{lem2_2}
r_k^Td_k-\|d_k\|^2_{B_k}\leq\|r_k\|_{H_k}\|d_k\|_{B_k} - \|d_k\|_{B_k}^2\leq -\bar\theta \|d_k\|_{B_k}^2.
\end{equation}
Combining \eqref{lem2_3_2} with \eqref{lem2_2}, we obtain \eqref{decrease2}, completing the proof.
\end{proof}

\section{Proof of Lemma \ref{alpha_lemma}}\label{Appendix:lemma2}
\begin{proof}
For any $0<\eta \leq \frac{2m}{L}\bar\theta(1-\delta)$, we have  from (\ref{uniformly_mat}) and \eqref{decrease2}, 
\begin{align*}
 \frac{L\eta}{2}\|d_k\|^2 &\leq m\bar\theta(1-\delta)\|d_k\|^2\\
 &\leq(1-\delta)\bar\theta \|d_k\|_{B_k}^2\\
  &\leq -(1-\delta)((\nabla g(x_k)-\xi_k)^Td_k + h_1(x_k^+) - h_1(x_k)).
\end{align*}
Hence, it follows from (\ref{lem_alpha}) that
\[f(x_k + \eta d_k)-f(x_k) \leq \eta \delta((\nabla g(x_k)-\xi_k)^Td_k + h_1(x_k^+) - h_1(x_k)).\]
This means that the line search condition \eqref{linecond} is satisfied for all
\[0<\eta \leq \min\left\{1,\frac{2m}{L}\bar\theta(1-\delta)\right\}.\]
Therefore, since we use the backtracking line search with $\beta_k\in(0,1)$, 
\[\beta_k\min\left\{1,\frac{2m}{L}\bar\theta(1-\delta)\right\} \leq \eta_k\leq1\]
holds. 
It follows from the above and $\beta_{min}\leq\beta_k$ that we have \eqref{alpha_lem_back}. 
Hence, this lemma is proved. 
\end{proof}

\section{Proof of Theorem \ref{thm:prox}}\label{Appendix:prox}
To prove Theorem \ref{thm:prox}, we introduce the following theorem \cite[Theorem 3.4]{SIOPT_Becker}.
\begin{theorem}\label{thm:Beker}
Let $V=D\pm \sum_{i=1}^r u_iu_i^T \in\mathbb{R}^{n\times n}$ be symmetric positive definite, where
$D\in\mathbb{R}^{n\times n}$ is symmetric positive definite and $u_i\in\mathbb{R}^n$.
Let $U=(u_1,...,u_r)$. If $r\leq n$, $U$ is full rank and $h_1$ is proper lsc convex, then 
\begin{equation*}\label{Becker_prox1}
{\rm Prox}_{h_1}^V(\bar{x}) = {\rm Prox}_{h_1}^D(\bar{x} \mp D^{-1}U\alpha^\ast),
\end{equation*}
where the mapping $\mathcal{L}:\mathbb{R}^r\to\mathbb{R}^r$ is defined by
\begin{equation*}\label{Berker_prox2}
\mathcal{L}(\alpha) = U^T(\bar{x} - {\rm Prox}_{h_1}^D(\bar{x} \mp D^{-1}U\alpha)) + \alpha
\end{equation*}
and $\alpha^\ast\in\mathbb{R}^r$ is the unique root of $\mathcal{L}(\alpha)=0$.
\end{theorem}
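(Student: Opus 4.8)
The plan is to prove the displayed reduction by comparing the subdifferential optimality conditions of the two proximal subproblems, using an auxiliary variable $\alpha\in\mathbb{R}^r$ to absorb the rank-$r$ correction $\pm\sum_i u_iu_i^T=\pm UU^T$. First I would record the well-posedness facts we get for free: since $V$ is symmetric positive definite and $h_1$ is proper lsc convex, the map $x\mapsto h_1(x)+\tfrac12\|x-\bar{x}\|_V^2$ is strongly convex, so $x^\ast:={\rm Prox}_{h_1}^V(\bar{x})$ exists and is unique, and is characterized by $0\in\partial h_1(x^\ast)+V(x^\ast-\bar{x})$. Writing $V=D\pm UU^T$, this inclusion reads $0\in\partial h_1(x^\ast)+D(x^\ast-\bar{x})\pm UU^T(x^\ast-\bar{x})$.

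The key step is to set $\alpha^\ast:=U^T(x^\ast-\bar{x})\in\mathbb{R}^r$, so that $\pm UU^T(x^\ast-\bar{x})=\pm U\alpha^\ast$ and the inclusion becomes $0\in\partial h_1(x^\ast)+D(x^\ast-(\bar{x}\mp D^{-1}U\alpha^\ast))$. This is exactly the optimality condition of $\min_x h_1(x)+\tfrac12\|x-(\bar{x}\mp D^{-1}U\alpha^\ast)\|_D^2$, whence $x^\ast={\rm Prox}_{h_1}^D(\bar{x}\mp D^{-1}U\alpha^\ast)$, which is the claimed formula. Substituting this identity into the definition of $\alpha^\ast$ gives $\alpha^\ast=U^T({\rm Prox}_{h_1}^D(\bar{x}\mp D^{-1}U\alpha^\ast)-\bar{x})$, i.e. $U^T(\bar{x}-{\rm Prox}_{h_1}^D(\bar{x}\mp D^{-1}U\alpha^\ast))+\alpha^\ast=0$, so $\mathcal{L}(\alpha^\ast)=0$ and a root exists.

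For the converse together with uniqueness, I would take any root $\alpha$ of $\mathcal{L}$ and put $y:={\rm Prox}_{h_1}^D(\bar{x}\mp D^{-1}U\alpha)$. Its $D$-prox optimality condition yields $0\in\partial h_1(y)+D(y-\bar{x})\pm U\alpha$, while $\mathcal{L}(\alpha)=0$ says precisely $\alpha=U^T(y-\bar{x})$; hence $\pm U\alpha=\pm UU^T(y-\bar{x})$ and therefore $0\in\partial h_1(y)+V(y-\bar{x})$. By uniqueness of the $V$-prox minimizer we get $y=x^\ast$, and then $\alpha=U^T(y-\bar{x})=U^T(x^\ast-\bar{x})=\alpha^\ast$. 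This shows at once that every root equals $\alpha^\ast$ (so the root is unique) and that the reduction formula ${\rm Prox}_{h_1}^V(\bar{x})={\rm Prox}_{h_1}^D(\bar{x}\mp D^{-1}U\alpha^\ast)$ holds.

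The hard part will be the sign bookkeeping between the $\pm$ in $V$ and the $\mp$ in the prox argument, which must be carried consistently through the completion of the square; equivalently one can justify the whole reduction by the Legendre identity $\pm\tfrac12(u_i^Tw)^2=\mathrm{opt}_{\alpha_i}\{\alpha_i u_i^Tw\mp\tfrac12\alpha_i^2\}$ and dualize. I expect the genuine subtlety to lie in the ``$-$'' case, where the resulting auxiliary problem in $(x,\alpha)$ is a joint minimization rather than a saddle problem: here one must check convexity and well-posedness, which follows because the Schur complement of its quadratic part is exactly $D-UU^T=V\succ0$, so $V\succ0$ delivers joint strong convexity and hence a unique $\alpha^\ast$. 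The equivalence itself uses only $D,V\succ0$ and convexity of $h_1$; the full-rank hypothesis on $U$ is needed merely to make $\alpha\in\mathbb{R}^r$ a minimal (nonredundant) parametrization and is what later guarantees nondegeneracy of the Jacobians $V_j\in\partial^C\mathcal{L}$ used by the semi-smooth Newton iteration.
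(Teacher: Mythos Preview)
Your argument is correct and self-contained: you pin down $x^\ast={\rm Prox}_{h_1}^V(\bar{x})$ via its subdifferential optimality condition, set $\alpha^\ast=U^T(x^\ast-\bar{x})$ to rewrite the rank-$r$ term, recognize the resulting inclusion as the $D$-prox optimality condition at the shifted point, and then run the converse to get uniqueness of the root. The sign bookkeeping is handled correctly, and your observation that the full-rank hypothesis on $U$ is not actually used in establishing existence and uniqueness of $\alpha^\ast$ (since any root is forced to equal $U^T(x^\ast-\bar{x})$) is accurate.

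However, note that the paper does \emph{not} prove this statement at all: Theorem~\ref{thm:Beker} is quoted verbatim from \cite[Theorem~3.4]{SIOPT_Becker} and invoked as a black box in the proof of Theorem~\ref{thm:prox}. So there is no ``paper's own proof'' to compare against here. What you have written is essentially the standard subdifferential-calculus proof of the Becker--Fadili--Ochs reduction; it is more direct than the saddle-point/Legendre route you sketch in your closing paragraph, and it has the virtue of making transparent exactly which hypotheses are load-bearing (positive definiteness of $V$ and $D$, convexity of $h_1$) versus which are there for downstream purposes (full rank of $U$, as you note, for the semi-smooth Newton analysis).
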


By using this theorem, we can prove Theorem \ref{thm:prox}. 
\begin{proof} ({\bf Proof of Theorem \ref{thm:prox}})
Let $P =\tau I + u_1u_1^T$, $B= P - u_2u_2^T$. Then, from Theorem \ref{thm:Beker} with $V=B$ and $D=P$, we have
\begin{equation*}\label{Becker_prox3}
{\rm Prox}_{h_1}^{B}(\bar{x}) = {\rm Prox}_{h_1}^{P} (\bar{x} + \alpha_2^\ast P^{-1}u_2),
\end{equation*}
where the mapping $\mathcal{L}_2:\mathbb{R}\to\mathbb{R}$ is defined by
\begin{equation*}\label{Becker_prox4}
\mathcal{L}_2(\alpha_2) = u_2^T(\bar{x} - {\rm Prox}_{h_1}^{P} (\bar{x}  + \alpha_2P^{-1}u_2)) + \alpha_2
\end{equation*}
and $\alpha_2^\ast\in\mathbb{R}$ is the root of $\mathcal{L}_2(\alpha_2)=0$. 
We next consider ${\rm Prox}_{h_1}^{P} (\bar{x} + \alpha_2^\ast P^{-1}u_2)$.
Applying Theorem \ref{thm:Beker} with $D = \tau I $ and $V = P$, we have
\begin{equation*}\label{Becker_prox5}
{\rm Prox}_{h_1}^{P}(\bar{x} + \alpha_2^\ast P^{-1}u_2) = {\rm Prox}_{h_1}^{\tau I} (\bar{x} + \alpha_2^\ast P^{-1}u_2 - \frac{\alpha_1^\ast}{\tau} u_1)
\end{equation*}
where   the mapping $\mathcal{L}_1:\mathbb{R}\to\mathbb{R}$ is defined by 
\begin{equation*}\label{Becker_prox6}
\mathcal{L}_1(\alpha_1) = u_1^T(\bar{x} + \alpha_2^\ast P^{-1}u_2 - {\rm Prox}_{h_1}^{\tau I} (\bar{x} + \alpha_2^\ast P^{-1}u_2 - \frac{\alpha_1}{\tau}u_1)) + \alpha_1
\end{equation*}
and $\alpha_1^\ast\in\mathbb{R}$ is the root of $\mathcal{L}_1(\alpha_1)=0$.
We now note that 
\[
{\rm Prox}_{h_1}^{\tau I}(\cdot) = \argmin_{x\in\mathbb{R}^n}~ h_1(x)+\frac{1}{2}\|x-\cdot\|_{\tau I}^2=
\argmin_{x\in\mathbb{R}^n}~ \frac{1}{\tau}h_1(x)+\frac{1}{2}\|x-\cdot\|^2={\rm Prox}_{\frac{1}{\tau}h_1}(\cdot).\]
Summarizing the above relations, we have \eqref{prox_Bk}.

We next aim to show the existence and the uniqueness of the solution $\alpha^\ast$.
The existence is immediately guaranteed by Theorem~\ref{thm:Beker}. 
To show uniqueness,  we choose any two solutions of $\mathcal{L}(\alpha)=0$, say $\hat\alpha=(\hat\alpha_1,\hat\alpha_2)^T,\ \bar\alpha=(\bar\alpha_1,\bar\alpha_2)^T\in\mathbb{R}^2$. 
Then, it follows from $\mathcal{L}(\hat\alpha)=\mathcal{L}(\bar\alpha)$ that 
\[
\left\{
\begin{array}{l}
u_1^T(\hat\alpha_2(\tau I+u_1u_1^T)^{-1}u_2-{\rm Prox}_{\frac{1}{\tau}h_1}(\zeta(\hat\alpha)))+\hat\alpha_1\\
\qquad =u_1^T(\bar\alpha_2(\tau I+u_1u_1^T)^{-1}u_2-{\rm Prox}_{\frac{1}{\tau}h_1}(\zeta(\bar\alpha)))+\bar\alpha_1,\\[6pt]
-u_2^T{\rm Prox}_{\frac{1}{\tau}h_1}(\zeta(\hat\alpha))+\hat\alpha_2
=-u_2^T{\rm Prox}_{\frac{1}{\tau}h_1}(\zeta(\bar\alpha))+\bar\alpha_2. 
\end{array}
\right.
\]
Thus, the relations ${\rm Prox}_{\frac{1}{\tau}h_1}(\zeta(\bar\alpha))={\rm Prox}_{h_1}^{B}(\bar{x}) ={\rm Prox}_{\frac{1}{\tau}h_1}(\zeta(\hat\alpha))$ and the second equality yield 
$\hat\alpha_2=\bar\alpha_2$. 
Further, the first equality implies $\hat\alpha_1=\bar\alpha_1$. 
Therefore, we have $\hat\alpha=\bar\alpha$, which implies that the solution of $\mathcal{L}(\alpha)=0$ is unique, completing the proof.
%Therefore, the proof is completed.\qed
\end{proof}

\section{Proof of Proposition \ref{prop:res}}\label{Appendix:prox2}
\begin{proof}
For simplicity, we set $\hat{x}={\rm Prox}_{\frac{1}{\tau}h_1}(\zeta(\alpha))$. 
It follows from \eqref{MLBFGS_B2}, \eqref{def:L} and $
u_1u_1^T(\tau I+u_1u_1^T)^{-1}=I-\tau(\tau I+u_1u_1^T)^{-1}
$ that
\begin{align}
U\mathcal{L}(\alpha)&=
-u_1u_1^T\bar x - \alpha_2u_1u_1^T(\tau I+u_1u_1^T)^{-1} u_2+u_1u_1^T \hat{x} - \alpha_1 u_1 %\nonumber\\
%&\quad 
+ u_2u_2^T\bar x  -u_2u_2^T\hat{x} + \alpha_2 u_2 
\nonumber\\
&=(-u_1u_1^T+u_2u_2^T)(\bar{x}-\hat{x}) - \alpha_2u_1u_1^T(\tau I+u_1u_1^T)^{-1} u_2 - \alpha_1 u_1 + \alpha_2 u_2
\nonumber\\
&=(\tau I-B)(\bar{x}-\hat{x}) - \alpha_2u_2 +  \alpha_2\tau(\tau I+u_1u_1^T)^{-1} u_2 - \alpha_1 u_1 + \alpha_2 u_2
\nonumber\\
&=B(\hat{x}-\bar x) + \tau(\bar{x}-\hat{x}) - \alpha_1 u_1 + \tau\alpha_2(\tau I+u_1u_1^T)^{-1} u_2.
\label{UL}
\end{align}
On the other hand, $\hat{x}={\rm Prox}_{\frac{1}{\tau}h_1}(\zeta(\alpha))$ implies
\[
\tau(\zeta(\alpha)-\hat{x})\in \partial h_1(\hat{x}).
\]
Therefore, it follows from \eqref{def:zeta}, \eqref{UL}, $\bar{x}=x-H(\nabla g(x)- \xi)$, and $BH=I$ that
\begin{align*}
U\mathcal{L}(\alpha)&=B(\hat{x}-\bar x) +\tau\left( \bar x -\hat{x}- \frac{
\alpha_1
}{\tau} u_1 + \alpha_2(\tau I+u_1u_1^T)^{-1} u_2 \right)\\
&=B(\hat{x}-\bar x)+\tau( \zeta(\alpha)-\hat{x})\\
&=\nabla g(x)- \xi + B(\hat{x}-x)+\tau(\zeta(\alpha)-\hat{x})\\
&\in \nabla g(x) - \xi + B(\hat{x}-x)+\partial h_1(\hat{x}).
\end{align*}
This completes the proof. 
\end{proof}
\section{Proof of Theorem \ref{thm_global_semismooth}}\label{Appendix:global_semismooth}
To prove Theorem \ref{thm_global_semismooth}, we first give the following lemma.
\begin{lemma}\label{lem_inner_linesearch}
Assume that ${\rm Prox}_{\frac{1}{\tau}h_1}$ is B-differentiable. 
Let $\bar\alpha\in\mathbb{R}^2$ be a point such that 
$\mathcal{L}(\bar\alpha)\ne 0$ and 
any element of $\partial^C \mathcal{L}(\bar\alpha)$ is nonsingular. 
Then, there exist a positive constant $\bar t$ and a compact neighborhood $\mathcal{N}(\bar\alpha)$ of $\bar\alpha$ 
such that the following statements hold for any $\alpha\in\mathcal{N}(\bar\alpha)$:
\begin{enumerate}
\item[(a)] 
$\mathcal{L}(\alpha)\ne 0$ and 
any element of $\partial^C \mathcal{L}(\alpha)$ is nonsingular.
\item[(b)] 
For $p=-V^{-T}\mathcal{L}(\alpha)$ and $V\in\partial^C \mathcal{L}(\alpha)$ satisfying 
\begin{align}
\Psi^\prime (\alpha;p)\le (V\mathcal{L}(\alpha))^Tp,
\label{cond_add}
\end{align}
the inequality
\begin{align}
\Psi(\alpha+t p)\le (1-2\sigma t)\Psi(\alpha)
\label{armijo_tmp}
\end{align}
holds for any $t\in(0,\bar t]$.
\end{enumerate}
\end{lemma}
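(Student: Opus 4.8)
The plan is to prove part (a) by a routine continuity/compactness argument and then use it to set up the one-point line-search estimate in part (b), which is the real content. For part (a), I would argue by contradiction using the outer semicontinuity (closedness) of the Clarke generalized Jacobian $\partial^C\mathcal{L}$ together with the local boundedness of $\partial^C\mathcal{L}$ near $\bar\alpha$ (both standard, since $\mathcal{L}$ is Lipschitz near $\bar\alpha$ by Theorem~\ref{thm:prox} and the nonexpansiveness of ${\rm Prox}_{\frac1\tau h_1}$). If no neighborhood as claimed existed, there would be a sequence $\alpha^{(i)}\to\bar\alpha$ and $V^{(i)}\in\partial^C\mathcal{L}(\alpha^{(i)})$ with $V^{(i)}$ singular; by local boundedness $V^{(i)}$ has a convergent subsequence, its limit lies in $\partial^C\mathcal{L}(\bar\alpha)$ by closedness, and singularity passes to the limit since the set of singular matrices is closed — contradicting the hypothesis on $\partial^C\mathcal{L}(\bar\alpha)$. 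Shrinking the neighborhood further, continuity of $\mathcal{L}$ and $\mathcal{L}(\bar\alpha)\ne 0$ give $\mathcal{L}(\alpha)\ne 0$ there; intersecting with a closed ball yields a compact $\mathcal{N}(\bar\alpha)$.

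For part (b), fix $\alpha\in\mathcal{N}(\bar\alpha)$ and a pair $(V,p)$ with $V\in\partial^C\mathcal{L}(\alpha)$, $p=-V^{-T}\mathcal{L}(\alpha)$, satisfying \eqref{cond_add}. The key computation is the directional derivative of the merit function: since $\Psi=\tfrac12\|\mathcal{L}\|^2$ and $\|\cdot\|^2$ is $C^1$ while $\mathcal{L}$ is B-differentiable (because ${\rm Prox}_{\frac1\tau h_1}$ is), the chain rule for directional derivatives gives $\Psi^\prime(\alpha;p)=\mathcal{L}(\alpha)^T\mathcal{L}^\prime(\alpha;p)$. Using \eqref{cond_add} and then $V^Tp=-\mathcal{L}(\alpha)$ gives
\[
\Psi^\prime(\alpha;p)\le (V\mathcal{L}(\alpha))^Tp=\mathcal{L}(\alpha)^TV^Tp=-\|\mathcal{L}(\alpha)\|^2=-2\Psi(\alpha)<0,
\]
so $p$ is a descent direction for $\Psi$ at $\alpha$ with the quantitatively useful bound $-2\Psi(\alpha)$. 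Then, from the definition of the directional derivative,
\[
\Psi(\alpha+tp)=\Psi(\alpha)+t\,\Psi^\prime(\alpha;p)+o(t)\le \Psi(\alpha)-2t\,\Psi(\alpha)+o(t)
=(1-2t)\Psi(\alpha)+o(t),
\]
and since $\sigma\in(0,1/2)$ we have $1-2t<1-2\sigma t$ with a gap linear in $t$, so the $o(t)$ term is absorbed for all small $t$, giving \eqref{armijo_tmp}.

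The one subtlety — and the step I expect to require the most care — is making the threshold $\bar t$ \emph{uniform} over all $\alpha\in\mathcal{N}(\bar\alpha)$ and over all admissible pairs $(V,p)$, rather than depending on $\alpha$ through the $o(t)$ term. The clean way is to replace the pointwise $o(t)$ expansion by the semismoothness/Lipschitz-type estimate: $\|\mathcal{L}(\alpha+tp)-\mathcal{L}(\alpha)-t\mathcal{L}^\prime(\alpha;p)\|=o(t\|p\|)$ uniformly, combined with uniform bounds $\|V^{-1}\|\le c_1$ on $\mathcal{N}(\bar\alpha)$ (from part (a) plus compactness and the local boundedness of $\partial^C\mathcal{L}$, the inverse being continuous on the compact set of nonsingular Jacobians that can occur) and $\|\mathcal{L}(\alpha)\|\le c_2$; these force $\|p\|\le c_1 c_2$, a uniform bound, and also $\Psi(\alpha)\le \tfrac12 c_2^2$. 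Expanding $\Psi(\alpha+tp)=\tfrac12\|\mathcal{L}(\alpha)+t\mathcal{L}^\prime(\alpha;p)+o(t)\|^2$, collecting the $O(t^2)$ and $o(t)$ remainders against the negative linear term $t\,\Psi^\prime(\alpha;p)\le -2t\,\Psi(\alpha)$, and using the slack $2(1-\sigma)t\,\Psi(\alpha)$ available between $-2t\Psi(\alpha)$ and $-2\sigma t\Psi(\alpha)$, one obtains a $\bar t$ depending only on $c_1,c_2,\sigma$ and the modulus of the remainder term on the compact set. I would carry out (a) first, then derive the uniform bounds $c_1,c_2$, then the directional-derivative inequality, and finally assemble the uniform Armijo estimate to produce $\bar t$.
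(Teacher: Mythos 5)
Your proposal follows essentially the same route as the paper's proof: part (a) via the upper semicontinuity and local boundedness of $\partial^C\mathcal{L}$ together with continuity of $\mathcal{L}$ (the paper argues directly with a compact set $\mathcal{T}(\bar\alpha)$ of nonsingular matrices, you argue by contradiction --- same content), and part (b) by expanding $\Psi$ along the B-derivative, using \eqref{cond_add} and $V^Tp=-\mathcal{L}(\alpha)$ to get $\Psi^\prime(\alpha;p)\le-\|\mathcal{L}(\alpha)\|^2=-2\Psi(\alpha)$, then absorbing the remainder using the boundedness of $p=-V^{-T}\mathcal{L}(\alpha)$ over the neighborhood. Your explicit attention to the uniformity of $\bar t$ is, if anything, more careful than the paper's write-up.

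There is, however, one concrete slip in your uniformity step. You record only an upper bound $\Psi(\alpha)\le\tfrac12 c_2^2$ and assert that $\bar t$ depends only on $c_1,c_2,\sigma$ and the remainder modulus; but the slack you rely on, $2(1-\sigma)t\,\Psi(\alpha)$, is proportional to $\Psi(\alpha)$, whereas the remainder ($Ct^2+o(t)$) is not, so a uniform $\bar t$ requires a \emph{positive lower bound} on $\Psi$ over $\mathcal{N}(\bar\alpha)$. This is precisely the constant the paper introduces: $\tilde\Psi:=\min_{\alpha\in\mathcal{N}(\bar\alpha)}\Psi(\alpha)>0$, which exists because $\mathcal{N}(\bar\alpha)$ is compact, $\mathcal{L}$ is continuous, and $\mathcal{L}(\alpha)\ne0$ on $\mathcal{N}(\bar\alpha)$ by part (a). With $\Psi(\alpha)\ge\tilde\Psi$ your own estimate becomes $\Psi(\alpha+tp)\le(1-2\sigma t)\Psi(\alpha)-2(1-\sigma)t\tilde\Psi+Ct^2+o(t)$, and $\bar t$ can then be chosen from $\tilde\Psi$, $C$, and the remainder modulus. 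Since the missing bound follows immediately from your part (a) plus compactness, the fix is one line, but as written the list of constants determining $\bar t$ is incomplete and the absorption argument does not go through where $\Psi(\alpha)$ is small.
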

\begin{proof} 
Since ${\rm Prox}_{\frac{1}{\tau}h_1}$ is local Lipschitz continuous, $\mathcal{L}$ is also local Lipschitz continuous, and so $\partial^C \mathcal{L}(\alpha)$ is compact for any $\alpha$. 
Since any element of $\partial^C \mathcal{L}(\bar\alpha)$ is nonsingular, 
there exists a compact neighborhood 
$\mathcal{T}(\bar\alpha) \supset \partial^C \mathcal{L}(\bar\alpha)$ such that any element of $\mathcal{T}(\bar\alpha)$ is nonsingular. 
Because $\partial^C \mathcal{L}$ is upper semi-continuous and $\partial^C \mathcal{L}(\alpha)$ is compact for any $\alpha$, 
we can choose $\mathcal{T}(\bar\alpha) \supset \partial^C \mathcal{L}(\bar\alpha)$  and a compact neighborhood $\mathcal{N}(\bar\alpha)$ of $\bar\alpha$ such that  
$\mathcal{L}(\alpha)\ne 0$ and $\partial^C \mathcal{L}(\alpha)\subset \mathcal{T}(\bar\alpha)$ hold for any $\alpha\in\mathcal{N}(\bar\alpha)$. 
Thus, (a) is satisfied. 

Next, we show (b). 
\iffalse
{
If $\mathcal{L}(\alpha)=0$, then we have $p=0$ and $\Psi(\alpha)=\frac12\|\mathcal{L}(\alpha)\|^2=0$. Thus, \eqref{armijo_tmp} holds.  
Accordingly, we consider the case $\mathcal{L}(\alpha)\neq 0$. 
}
Because the function $\|\cdot\|^2$ is continuously differentiable and $\mathcal{L}$ is Lipschitz continuous, 
it follows from \cite[Proposition 7.1.11]{facchinei2003finite} that 
%\begin{align*}
\[
\partial^C\Psi(\alpha)= \{V\mathcal{L}(\alpha)\mid V\in\partial^C \mathcal{L}(\alpha)\}. 
\]
%\label{chain_rule}
%\end{align*}
Therefore, %we have 
$V\mathcal{L}(\alpha)\in\partial^C\Psi(\alpha)$ for any $V\in\partial^C \mathcal{L}(\alpha)$. 
Since $\mathcal{L}$ is semi-smooth, $\Psi$ is also semi-smooth. 
Thus, from $V\mathcal{L}(\alpha)\in\partial^C\Psi(\alpha)$ and \cite[Theorem 7.4.3]{facchinei2003finite}, 
\fi
Since ${\rm Prox}_{\frac{1}{\tau}h_1}$ is local Lipschitz continuous and directionally differentiable, $\mathcal{L}$ is B-differentiable~{\cite[Definition 3.1.2]{facchinei2003finiteV1}}.  
Thus, it follows from \eqref{cond_add} and \cite[Proposition 3.1.3]{facchinei2003finiteV1} that the following relations hold for any $t>0$: 
\begin{align}
\Psi(\alpha+t p)&=\Psi(\alpha)+\Psi^\prime (\alpha;tp)+o(\|t p\|)\nonumber\\
&=\Psi(\alpha)+t\Psi^\prime (\alpha;p)+o(\|t p\|)\nonumber\\
&\le\Psi(\alpha)+t(V\mathcal{L}(\alpha))^T p+o(\|t p\|)\nonumber\\
%&=\Psi(\alpha)+t \mathcal{L}(\alpha)^T V^T p+o(\|t p\|)\nonumber\\
&=\Psi(\alpha)-t \|\mathcal{L}(\alpha)\|^2+o(\|t p\|)\nonumber\\
&=(1-2t)\Psi(\alpha)+o(\|t p\|). \label{eval:Psi}
\end{align}
From the above arguments, for any $\alpha\in\mathcal{N}(\bar{\alpha})$, it holds that $\partial^C \mathcal{L}(\alpha)\subset \mathcal{T}(\bar\alpha)$ and $\mathcal{T}(\bar\alpha)$ is compact. Hence, $p=-V^{-T}\mathcal{L}(\alpha)$ is bounded. 
In addition, since $\mathcal{N}(\bar{\alpha})$ is compact and $\mathcal{L}(\alpha)\ne 0$ for any $\alpha\in \mathcal{N}(\bar{\alpha})$, 
there exists a positive constant $\tilde \Psi$ such that $\tilde\Psi\le\Psi(\alpha)$ for any $\alpha\in \mathcal{N}(\bar{\alpha})$. 
Therefore, it follows from $\sigma\in(0,1/2)$ and \eqref{eval:Psi} that 
\[
\Psi(\alpha+t p)\le(1-2\sigma t)\Psi(\alpha) -2t(1-\sigma)\tilde \Psi +o(t). 
\]
%Since $\sigma\in(0,1/2)$ and $\mathcal{L}(\alpha)\ne 0$, we have $(1-\sigma)\Psi(\alpha)>0$. 
Thus, there exists a positive constant $\bar{t}$ such that 
\eqref{armijo_tmp} holds for any $t\in(0,\bar t]$. 
\end{proof}
%%%%%%%%%%%%%%%%%%
From Lemma~\ref{lem_inner_linesearch}, we immediately have the following property. 
\begin{remark}\label{welldef_inner_linesearch}
Consider Algorithm~\ref{alg:semismoothNewton}. 
%Assume that ${\rm Prox}_{h_1}$ is semi-smooth and 
If any element of $\partial^C \mathcal{L}(\alpha_j)$ is nonsingular and  
  \eqref{eq:descent_direction} holds, 
then the line search condition \eqref{inner_armijo} is achieved for some finite number $l$. 
\end{remark}

By using Lemma~\ref{lem_inner_linesearch}, we prove Theorem~\ref{thm_global_semismooth}. 
%%%%%%%%%%%%%%%%%%
\begin{proof} ({\bf Proof of Theorem~\ref{thm_global_semismooth}})
If $\mathcal{L}(\alpha_j)=0$ for some $j\ge 0$, we have the desired result. 
Thus, we consider the case where $\mathcal{L}(\alpha_j)\neq 0$ for all $j\ge 0$. 
It follows from Remark~\ref{welldef_inner_linesearch} and the line search condition \eqref{inner_armijo} that $\{\Psi(\alpha_j)\}$ is a nonincreasing sequence. Hence, $\{\alpha_j\}\subset\mathcal{S}_0$ holds. 
Since the level set $\mathcal{S}_0$ is compact, $\{\alpha_j\}$ has at least one accumulation point. 

We show the theorem by contradiction. 
Assume that there exists an accumulation point $\widehat{\alpha}$ 
such that $\mathcal{L}(\widehat\alpha)\ne 0$ (namely, $\Psi (\widehat\alpha)>0$),  
and consider a subsequence $\{\alpha_{j_i}\}$ 
such that $\{\alpha_{j_i}\}\to\widehat{\alpha}\ (i\to\infty)$. 
\iffalse
Let $\widehat{\alpha}$ be an accumulation point and (if necessary) consider a subsequence $\{\alpha_{j_i}\}$ 
such that $\{\alpha_{j_i}\}\to\widehat{\alpha}\ (i\to\infty)$. 
\fi
For sufficiently large $i$,  the relation $\{\alpha_{j_i}\}\subset \mathcal{N}(\widehat{\alpha})$ holds, 
where $\mathcal{N}(\widehat{\alpha})$ is the neighborhood appearing in Lemma~\ref{lem_inner_linesearch} with $\bar\alpha=\widehat{\alpha}$. 
Let $\hat{l}$ be the smallest nonnegative integer such that $\rho^{\hat{l}}\le\bar t$, where $\bar t$ is the positive constant appearing in Lemma~\ref{lem_inner_linesearch}. 
Then, it follows from \eqref{armijo_tmp} that 
\begin{align*}
\Psi\left(\alpha_{j_i}+\rho^{\hat{l}} p_{j_i}\right)\le \left(1-2\sigma \rho^{\hat{l}}\right)\Psi(\alpha_{j_i})
\end{align*}
holds for sufficiently large $i$. 
From the backtracking rule of the algorithm, $\rho^{\hat{l}}\le t_{j_i}$ is satisfied. 
Hence, taking into account $j_i+1\le j_{i+1}$, we have 
\begin{align*}
\Psi(\alpha_{j_{i+1}})\le\Psi(\alpha_{j_i+1})=\Psi(\alpha_{j_i}+t_{j_i} p_{j_i})
\le (1-2\sigma t_{j_i})\Psi(\alpha_{j_i})\le \left(1-2\sigma \rho^{\hat{l}}\right)\Psi(\alpha_{j_i}). 
\end{align*}
Since $1-2\sigma \rho^{\hat{l}}\in(0,1)$ is a constant independent of $i$, 
we obtain 
\[
\Psi(\widehat{\alpha})=\lim_{i\to\infty}\Psi(\alpha_{j_i})=0. 
\]
Since this contradicts the assumption $\mathcal{L}(\widehat\alpha)\ne 0$, % of contradiction, 
any accumulation point of $\{\alpha_j\}$ is a solution of \eqref{solve:L}. 
Moreover, from Theorem~\ref{thm:prox}, problem \eqref{solve:L} has a unique solution. Hence, the proof is complete. 
\end{proof}

%%
%{\color{red}
%\input{L1.tex}
%}

%%
\section{Proof of Theorem \ref{thm:conv}}\label{Appendix:local_semismooth}
\begin{proof}
It follows from Theorem~\ref{thm_global_semismooth}, the sequence $\{\alpha_j\}$ converges to the unique solution $\alpha^\ast$. 
{
In the same way as the proof of Lemma~\ref{lem_inner_linesearch}(a),  
we can show that there exists a compact neighborhood $\mathcal{N}^\prime(\alpha^\ast)$ such that 
any element of $\partial^C \mathcal{L}(\alpha)$ is nonsingular for any $\alpha\in\mathcal{N}^\prime(\alpha^\ast)$.
\iffalse
It follows from Theorem~\ref{thm_global_semismooth}, the sequence $\{\alpha_j\}$ converges to the unique solution $\alpha^\ast$ and hence $\alpha_j\in\mathcal{N}^\prime(\alpha^\ast)$ for sufficiently large $j$, 
where $\mathcal{N}(\alpha^\ast)$ is the neighborhood appearing in Lemma~\ref{lem_inner_linesearch} with $\bar\alpha=\alpha^\ast$. 
\fi
Since $\mathcal{N}(\alpha^\ast)$ is a compact set,  
$\partial^C \mathcal{L}$ is upper semi-continuous, and $\alpha_j\in\mathcal{N}(\alpha^\ast)$ for sufficiently large $j$, 
%it follows from Lemma~\ref{lem_inner_linesearch}(a) that 
there exists a positive constant $\widehat{c}_1$ such that
\[
\|V_j^{-1}\|\le \widehat{c}_1 \quad \mbox{for }\forall V_j\in \partial^C \mathcal{L}(\alpha_j)
\]
holds. 
}
Therefore, the (strongly) semi-smoothness yields 
\begin{align}
\|\alpha_j+p_j-\alpha^\ast\|&=\|\alpha_j-V_j^{-T}\mathcal{L}(\alpha_j)-\alpha^\ast\|\nonumber\\
&\le \widehat{c}_1\|V_j^T(\alpha_j-\alpha^\ast)-\mathcal{L}(\alpha_j)+\mathcal{L}(\alpha^\ast)\|=o(\|\alpha_j-\alpha^\ast\|)\label{super_linear}\\
&(=O(\|\alpha_j-\alpha^\ast\|^2\ \ \mbox{for the strongly semi-smooth case}).\nonumber
\end{align}
On the other hand, from the local Lipschitz continuity of $\mathcal{L}$ and \cite[Theorem 3.1]{MOR_Qi1993}, 
there exist positive constants $\widehat{c}_2,\ \widehat{c}_3$ satisfying 
\[
\widehat{c}_2\|\alpha_j-\alpha^\ast\|\le\|\mathcal{L}(\alpha_j)-\mathcal{L}(\alpha^\ast)\|
\le \widehat{c}_3 \|\alpha_j-\alpha^\ast\|. 
\]
Therefore, by \eqref{super_linear}, we have 
\begin{align*}
\Psi(\alpha_j+p_j)&=\frac12 \|\mathcal{L}(\alpha_j+p_j)-\mathcal{L}(\alpha^\ast)\|^2\\
&=O(\|\alpha_j+p_j-\alpha^\ast\|^2)=o(\|\alpha_j-\alpha^\ast\|^2)=o(\|\mathcal{L}(\alpha_j)\|^2)=o(\Psi(\alpha_j)),
\end{align*}
which implies that the line search condition \eqref{inner_armijo} holds with $l=0$, namely, $t_j=1$. 
Thus, using \eqref{super_linear}, we obtain 
\begin{align*}
\|\alpha_{j+1}-\alpha^\ast\|&=o(\|\alpha_j-\alpha^\ast\|)\\
&(=O(\|\alpha_j-\alpha^\ast\|^2\ \ \mbox{for the strongly semi-smooth case}),
\end{align*}
and hence the proof is complete. 
\end{proof}
%%%%%%%

\section{Proof of Proposition~\ref{prop2pprime}}
\label{sec:A6}
\begin{proof}
The definition \eqref{def:u} yields 
\[
u_2^Tu_2=\tau_k,\quad
u_1^Tu_1=\frac{\gamma_k z_{k-1}^Tz_{k-1}}{s_{k-1}^Tz_{k-1}}, \quad
u_1^Tu_2=\frac{\sqrt{\tau_k\gamma_ks_{k-1}^Tz_{k-1}}}{\|s_{k-1}\|}. 
\]
It follows from $s_{k-1}^Tz_{k-1}>0$ and the Cauchy--Schwarz inequality that 
\[
\frac{s_{k-1}^Tz_{k-1}}{s_{k-1}^Ts_{k-1}}\le \frac{z_{k-1}^Tz_{k-1}}{s_{k-1}^Tz_{k-1}}.
\] 
Therefore, using \eqref{Lip_Ass}, \eqref{def:sz}, \eqref{sz>m}, and  \eqref{<gamma<}, we have
\[
\underline{\tau}\le u_2^Tu_2\le \bar\tau,\quad 
\underline{\gamma}\underline{\nu}
\le\frac{\gamma_k s_{k-1}^Tz_{k-1}}{s_{k-1}^Ts_{k-1}}
\le u_1^Tu_1\le\frac{\bar\gamma(\bar{\nu}+L)^2}{\underline{\nu}},
\]
and 
\[
\sqrt{\underline{\tau}\underline{\gamma}\underline{\nu}}\le u_1^Tu_2
\le\sqrt{\bar{\tau}\bar{\gamma}(\bar{\nu}+L)}. 
\]
 From $(\tau_k I+u_1u_1^T)^{-1}=\frac{1}{\tau_k}I-\frac{u_1u_1^T}{\tau_k^2+\tau_k\|u_1\|^2}$, we get 
\[
u_2^T(\tau_k I+u_1u_1^T)^{-1}u_2=\frac{1}{\tau_k} u_2^Tu_2-\frac{(u_1^Tu_2)^2}{\tau_k^2+\tau_k\|u_1\|^2} 
=1-\frac{(u_1^Tu_2)^2}{\tau_k^2+\tau_k\|u_1\|^2}, 
\]
which implies that 
\[
\frac{\underline{\tau}\underline{\gamma}\underline{\nu}}{\bar{\tau}^2+\bar{\tau}\frac{\bar\gamma(\bar{\nu}+L)^2}{\underline{\nu}}}
\le
1-u_2^T(\tau_k I+u_1u_1^T)^{-1}u_2
\le \frac{(u_1^Tu_2)^2}{\tau_k^2}
\le \frac{\bar{\tau}\bar{\gamma}(\bar{\nu}+L)}{\underline{\tau}^2}. 
\]
By letting $v=\tau_k(\zeta(\alpha) -   {\rm Prox}_{\frac{1}{\tau_k}h_1}(\zeta(\alpha)))\in\partial h_1(\zeta(\alpha))$, 
it follows from \eqref{def:zeta} and \eqref{def:L} that 
\begin{align}
\mathcal{L}(\alpha) =
\begin{pmatrix}
\frac{1}{\tau_k}u_1^Tv+(1+\frac{1}{\tau_k}u_1^Tu_1)\alpha_1\\[6pt]
\frac{1}{\tau_k}u_2^Tv+\frac{1}{\tau_k}u_1^Tu_2 \alpha_1+(1-u_2^T(\tau_k I+u_1u_1^T)^{-1}u_2)\alpha_2 
\end{pmatrix}. 
\label{rewite:L}
\end{align}
On the other hand, from the assumption \eqref{ass:bound_cdiff} and the above evaluations, 
the following relations hold: 
\[
\lvert u_1^Tv \rvert \le\bar{c} \sqrt{\frac{\bar\gamma(\bar{\nu}+L)^2}{\underline{\nu}}},\quad \lvert u_2^Tv \rvert \le \bar{c}\sqrt{\bar{\tau}}. 
\]
Therefore, it follows from the above evaluations, \eqref{<gamma<}, %\eqref{rewite:L} {\color{red}and the definition of the merit function. }
and \eqref{rewite:L} that there exist  positive constants $\widehat{c}_4,\widehat{c}_5$, and $\widehat{c}_6$ satisfying 
\[\widehat{c}_4\alpha_1^2+(\widehat{c}_5\alpha_1+\widehat{c}_6\alpha_2)^2\le\frac12 \|\mathcal{L}(\alpha)\|^2=\Psi(\alpha)\]
when $\|\alpha\|$ is sufficiently large. 
Therefore, the proof is complete. 
\end{proof}

\section{Choice for $V_j$}\label{sec:Adirectional}
%In addition, we can choose $V_j$ 
%to satisfy \eqref{eq:descent_direction} when $h_1(x)=\lambda\|x\|_1$ ($\lambda>0$), and give the following proposition.
%We provide the proof in Section~\ref{sec:Adirectional}.
%%
\begin{proposition}\label{prop:directional}
Suppose that $h_1(x)=\lambda\|x\|_1$ $(\lambda>0)$. 
Let $\zeta$ and $\mathcal{L}$ be given in \eqref{def:zeta} and \eqref{def:L}, and  let
\begin{equation}\label{eq:jac}
V_j = \begin{pmatrix}
1+\frac{1}{\tau}u_1^TWu_1 &\frac{1}{\tau}u_2^TWu_1 \\[5pt]
(u_1 - Wu_1)^T(\tau I+u_1u_1^T)^{-1}u_2 & ~1-u_2^TW(\tau I+u_1u_1^T)^{-1}u_2
\end{pmatrix},
\end{equation}
where
\begin{equation*}%\label{eq:subdiff_soft1}
W
= \begin{pmatrix}
  w_1             &\\
    &           \ddots        &             \\
    &        &      w_n
\end{pmatrix}
%\end{equation*}
%and
%\begin{equation*}%\label{eq:subdiff_soft2}
\quad\text{and}\quad
w_i=
\left\{ 
\begin{array}{ll}
1~~ & \text{if}~ \lvert(\zeta(\alpha_j))_i\rvert >  \frac{\lambda}{\tau} 
%~\text{or}~((\zeta(\alpha_j))_i=\frac{\lambda}{\tau}~\&~ 
%( \mathcal{L}_1(\alpha)u_1  + \mathcal{L}_2(\alpha)u_2)_i
%\geq0)\\
%&\qquad\text{or}~((\zeta(\alpha_j))_i=-\frac{\lambda}{\tau}~\&~( \mathcal{L}_1(\alpha)u_1  + \mathcal{L}_2(\alpha)u_2)_i
%\leq0)
, \\[5pt]
0~~ &\text{otherwise},
\end{array} \right.
\end{equation*}
for $i=1,\dots,n$. 
Then, $V_j\in \partial^C \mathcal{L}(\alpha_j)$ holds. 
\end{proposition}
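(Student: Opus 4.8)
The plan is to exploit the piecewise‑affine structure of $\ell_1$ soft‑thresholding together with the definition of the Clarke generalized Jacobian as the convex hull of limits of classical Jacobians.

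First, since $h_1(x)=\lambda\|x\|_1$, the map ${\rm Prox}_{\frac1\tau h_1}$ is the componentwise soft‑thresholding operator $S$ at level $\lambda/\tau$, i.e.\ $(S(y))_i=\mathrm{sign}(y_i)\max\{\,|y_i|-\lambda/\tau,\,0\,\}$. This $S$ is separable, piecewise affine, and globally Lipschitz; at any $y$ whose coordinates all avoid the kink values $\pm\lambda/\tau$ it is differentiable with $S'(y)=\mathrm{diag}(d(y))$, where $d_i(y)=1$ if $|y_i|>\lambda/\tau$ and $d_i(y)=0$ if $|y_i|<\lambda/\tau$. Because $S$ is separable, $\partial^C S(y)$ is exactly the set of diagonal matrices whose $i$‑th entry lies in the one‑dimensional Clarke subdifferential of $t\mapsto\mathrm{sign}(t)\max\{|t|-\lambda/\tau,0\}$ at $y_i$, which is $\{1\}$, $\{0\}$, or $[0,1]$ according as $|y_i|$ is $>$, $<$, or $=\lambda/\tau$. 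In particular the matrix $W$ of the statement, which assigns $0$ to every coordinate $i$ with $|(\zeta(\alpha_j))_i|=\lambda/\tau$, is a legitimate element of $\partial^C S(\zeta(\alpha_j))$.

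Second, I would write $\mathcal L$ as an affine image of $S\circ\zeta$. From \eqref{def:zeta} and \eqref{def:L}, $\zeta(\alpha)=\bar x+C\alpha$ is affine with $C=[-\tfrac1\tau u_1,\ (\tau I+u_1u_1^T)^{-1}u_2]$, and $\mathcal L(\alpha)=c(\alpha)-\widetilde U\,S(\zeta(\alpha))$ with $\widetilde U=[u_1,u_2]^T$ and $c$ affine in $\alpha$. At any $\beta$ at which $S$ is differentiable at $\zeta(\beta)$, the chain rule gives $\mathcal L'(\beta)=c'-\widetilde U\,\mathrm{diag}(d(\zeta(\beta)))\,C$; carrying out this product and simplifying with the identities $(\tau I+u_1u_1^T)^{-1}=\tfrac1\tau I-\tfrac{u_1u_1^T}{\tau^2+\tau\|u_1\|^2}$ and $u_1u_1^T(\tau I+u_1u_1^T)^{-1}=I-\tau(\tau I+u_1u_1^T)^{-1}$ (the latter already used in the proof of Proposition~\ref{prop:res}) shows that $\mathcal L'(\beta)$ is precisely the matrix displayed in \eqref{eq:jac} with $W$ replaced by $\mathrm{diag}(d(\zeta(\beta)))$, read with the row/column convention consistent with the semismooth Newton step $-V_j^{-T}\mathcal L(\alpha_j)$ in Algorithm~\ref{alg:semismoothNewton}. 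To then conclude $V_j\in\partial^C\mathcal L(\alpha_j)$, the plan is to exhibit differentiability points $\beta_k\to\alpha_j$ of $\mathcal L$ along which $\mathrm{diag}(d(\zeta(\beta_k)))\to W$: for coordinates with $|(\zeta(\alpha_j))_i|\neq\lambda/\tau$ this is automatic by continuity, and for the finitely many kink coordinates one perturbs $\alpha_j$ so that $|(\zeta(\beta_k))_i|<\lambda/\tau$; since $\mathcal L'(\beta_k)$ then converges to the matrix in \eqref{eq:jac}, this limit lies in $\partial^C\mathcal L(\alpha_j)$ by definition.

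The step I expect to be the main obstacle is exactly this last one: pushing $W\in\partial^C S(\zeta(\alpha_j))$ back through the \emph{non‑surjective} affine inner map $\zeta$. The general chain‑rule relation $\partial^C(S\circ\zeta)(\alpha)\subseteq\partial^C S(\zeta(\alpha))\,C$ runs in the wrong direction, so one really must produce the perturbing sequence by hand, and when three or more coordinates of $\zeta(\cdot)$ hit the threshold at $\alpha_j$ the linear inequalities requiring them to move inward simultaneously may be infeasible (the directions can positively span $\mathbb{R}^2$). Coordinates along which $\zeta(\cdot)$ is constant are harmless, because they contribute a zero row to $C$ and hence do not affect $\mathcal L'$; the remaining degenerate configurations should be handled using the explicit separable description of $\partial^C S$ together with the sum/affine rules for $\partial^C$, possibly under a mild genericity assumption on $u_1,u_2$. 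Once the representation $V_j=c'-\widetilde U\,W\,C$ is in hand, matching it term‑by‑term against \eqref{eq:jac} is routine.
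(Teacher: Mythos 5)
Your proposal follows essentially the same route as the paper's proof: identify ${\rm Prox}_{\frac{1}{\tau}h_1}$ with componentwise soft thresholding, write $\mathcal{L}$ as an affine function of the thresholded vector ${\rm Prox}_{\frac{1}{\tau}h_1}(\zeta(\alpha))$, compute the Jacobian on the cells where no coordinate of $\zeta(\alpha)$ equals $\pm\lambda/\tau$ (your matrix agrees with \eqref{eq:jac} up to the transpose convention implicit in the step $p_j=-V_j^{-T}\mathcal{L}(\alpha_j)$, which you correctly flag), and then conclude $V_j\in\partial^C\mathcal{L}(\alpha_j)$. The only real difference is how that last membership is justified. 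The paper passes from the formula for $\nabla\mathcal{L}$ on the differentiability set directly to the claim that $\partial^C\mathcal{L}(\alpha)$ is the whole family obtained by letting $\hat\omega_i$ range over $[0,1]$ at the kink coordinates; as you observe, the chain rule only gives an inclusion in that direction, and putting the specific pattern $W$ (zeros at every kink coordinate) inside $\partial^C\mathcal{L}(\alpha_j)$ requires either a sequence of differentiability points whose Jacobians converge to \eqref{eq:jac} or a convex-combination argument.

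Your perturbation construction closes this for every $\alpha_j$ at which no coordinate of $\zeta(\alpha_j)$ sits exactly at $\pm\lambda/\tau$, and more generally whenever the strict inequalities pushing the active coordinates inward are simultaneously feasible; coordinates on which $\zeta$ is constant are indeed harmless since they multiply zero rows. The genuinely degenerate configuration you single out (several active coordinates whose gradient directions positively span $\mathbb{R}^2$, so that no inward direction exists) is left open in your proposal, but it is not addressed in the paper either — the published proof simply asserts the equality of $\partial^C\mathcal{L}$ with the parameterized set. So your argument is as complete as the paper's, covers all generically occurring cases, and is more explicit about where the subtlety lies; an unconditional proof would additionally have to check that the convex hull of the attainable $0/1$ patterns, after multiplication by $[u_1,u_2]^T$ and the columns of $[-\frac{1}{\tau}u_1,\,(\tau I+u_1u_1^T)^{-1}u_2]$, still yields the matrix \eqref{eq:jac}, or else record the degenerate case as an explicit caveat.
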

\begin{proof}
For simplicity, we omit the subscript $j$  and 
set 
\[\bar{u}_1 = \frac{1}{\tau}u_1\quad\text{and}\quad \bar{u}_2 = (\tau I+u_1u_1^T)^{-1}u_2.\]
Then, we can rewrite $\zeta(\alpha)$ and $\mathcal{L}(\alpha) $ as 
\begin{equation*}%\label{zeta_rewritten}
\zeta(\alpha) = \bar{x} - \alpha_1 \bar{u}_1 + \alpha_2 \bar{u}_2
\end{equation*}
and
\begin{align*}
\mathcal{L}(\alpha) 
=
\begin{pmatrix}
 \alpha_1 + 
u_1^T\bar{x}  +  \alpha_2(u_1^T\bar{u}_2) -  u_1^T {\rm Prox}_{\frac{1}{\tau}h_1}(\zeta(\alpha))\\
 \alpha_2 + u_2^T\bar{x} -  u_2^T {\rm Prox}_{\frac{1}{\tau}h_1}(\zeta(\alpha))
\end{pmatrix},
\end{align*}
respectively. When $h_1(x)=\lambda\|x\|_1$ $(\lambda>0)$ , the proximal mapping is given by
\[
\left({\rm Prox}_{\frac{1}{\tau}h_1}(\zeta(\alpha))\right)_i %= S_{\lambda/\tau}((\zeta(\alpha))_i)
=
\left\{ 
\begin{array}{ll}
(\zeta(\alpha))_i- \frac{\lambda}{\tau}  & \text{if}~(\zeta(\alpha))_i \geq \frac{\lambda}{\tau}, \\
0 &  \text{if}~\lvert (\zeta(\alpha))_i  \rvert <  \frac{\lambda}{\tau},\\
(\zeta(\alpha))_i + \frac{\lambda}{\tau} & \text{if}~ (\zeta(\alpha))_i \leq -\frac{\lambda}{\tau}.
\end{array} \right.
\] 
%%%%%%%%%%%%%%%%%
\iffalse
Here, $S_{\lambda/\tau}$ is called the soft thresholding 
Let us calculate the subdifferential. 
It follows from \eqref{zeta_rewritten} and the chain rule that
\[\frac{\partial}{\partial\alpha_1}( {\rm Prox}_{\frac{1}{\tau}h_1}(\zeta(\alpha)))_i=-(\bar{u}_1)_i \partial S_{\lambda/\tau} ((\zeta(\alpha))_i)
\]
and
\[
\frac{\partial}{\partial\alpha_2}( {\rm Prox}_{\frac{1}{\tau}h_1}(\zeta(\alpha)))_i=(\bar{u}_2)_i \partial S_{\lambda/\tau} ((\zeta(\alpha))_i),
\]
where 
\[
\partial S_{\lambda/\tau} ((\zeta(\alpha))_i) = 
\left\{ 
\begin{array}{ll}
\{1\}  & \text{if}~\lvert(\zeta(\alpha))_i\rvert > \frac{\lambda}{\tau}, \\[5pt]
~\![0,1] & \text{if}~ \lvert(\zeta(\alpha))_i\rvert = \frac{\lambda}{\tau},
\\[5pt]
\{0\} &  \text{if}~ \lvert(\zeta(\alpha))_i  \rvert <  \frac{\lambda}{\tau}.
\end{array} \right.
\]
operator~(see~\cite{beck2017first}, for example). 
\fi
%%%%%%%%%%%%%
We now consider $\mathcal{D} = \{\alpha \vert \mathcal{L}(\alpha) \text{ is differenciable}\}$. 
%Then, noting $u^T {\rm Prox}_{\frac{1}{\tau}h_1}(\zeta(\alpha))=  \sum_{i=1}^n(u)_i ({\rm Prox}_{\frac{1}{\tau}h_1}(\zeta(\alpha)))_i$ for any $u\in\mathbb{R}^n$, we have 
For $\forall \alpha\in \mathcal{D}$, we have
\begin{equation*}%\label{eq:gjac0}
\nabla \mathcal{L}(\alpha) = 
\begin{pmatrix}\displaystyle
1 + \sum_{i=1}^n (u_1)_i (\bar{u}_1)_i \bar\omega_{i} 
& \displaystyle  \sum_{i=1}^n(\bar{u}_1)_i (u_2)_i \bar\omega_{i} \\
\displaystyle u_1^T\bar{u}_2 - \sum_{i=1}^n({u}_1)_i(\bar{u}_2)_i \bar\omega_{i}
&\displaystyle 1 - \sum_{i=1}^n(u_2)_i (\bar{u}_2)_i \bar\omega_{i}
\end{pmatrix},
%\in \partial^C \mathcal{L}(\alpha),
\end{equation*}
where 
\[\bar\omega_i = \begin{cases}
1, & \lvert (\zeta(\alpha))_i\rvert > \frac{\lambda}{\tau},\\
0, & \lvert (\zeta(\alpha))_i \rvert< \frac{\lambda}{\tau}.
\end{cases}
\]
Thus, the Clarke {differential} of $\mathcal{L}$ is given by
\begin{equation*}%\label{eq:gjac}
\partial^C \mathcal{L}(\alpha) = \left\{
\begin{pmatrix}\displaystyle
1 + \sum_{i=1}^n (u_1)_i (\bar{u}_1)_i \hat\omega_{i} 
& \displaystyle  \sum_{i=1}^n(\bar{u}_1)_i (u_2)_i \hat\omega_{i} \\
\displaystyle u_1^T\bar{u}_2 - \sum_{i=1}^n({u}_1)_i(\bar{u}_2)_i \hat\omega_{i}
&\displaystyle 1 -  \sum_{i=1}^n(u_2)_i (\bar{u}_2)_i\hat \omega_{i}
\end{pmatrix}
\left\vert~
\hat{\omega}_i
\left\{
\begin{array}{ll}
=1  & \text{if}~\lvert(\zeta(\alpha))_i\rvert > \frac{\lambda}{\tau}, \\
\in [0,1] & \text{if}~ \lvert(\zeta(\alpha))_i\rvert = \frac{\lambda}{\tau},
\\
=0 &  \text{if}~ \lvert(\zeta(\alpha))_i  \rvert <  \frac{\lambda}{\tau}.
\end{array}
\right.\right.
\right\}.
\end{equation*}
Therefore, we obtain  $V\in \partial^C \mathcal{L}(\alpha)$. 
\end{proof}

\end{appendices}

\end{document}